\newtheorem{theorem}{Theorem}
\newtheorem{definition}{Definition}
\newtheorem{lemma}{Lemma}
\newtheorem{proposition}{Proposition}
\newtheorem{corollary}{Corollary}
\newtheorem{remark}{Remark}
\newtheorem{example}{Example}
\date{}
\numberwithin{equation}{section} \numberwithin{theorem}{section}
\numberwithin{lemma}{section} \numberwithin{corollary}{section}
\numberwithin{remark}{section} \numberwithin{proposition}{section}
\numberwithin{definition}{section}
\newcommand{\loc}{\operatorname{loc}}
\def \N {\mathbb{N}}
\def \R {\mathbb{R}}
\def \supp {\mathrm{supp } }
\def \diam {\mathrm{diam}}
\def \dist {\mathrm{dist}}
\def \loc {\mathrm{loc}}
\def \suchthat {\ \big | \ }
\def \tr {\mathrm{Tr}}
\def \e {\varepsilon}
\newcommand{\pe}{E_{\varepsilon}}
\newcommand{\defeq}{\mathrel{\mathop:}=}
\newcommand{\intav}[1]{\mathchoice {\mathop{\vrule width 6pt height 3 pt depth  -2.5pt
\kern -8pt \intop}\nolimits_{\kern -6pt#1}} {\mathop{\vrule width
5pt height 3  pt depth -2.6pt \kern -6pt \intop}\nolimits_{#1}}
{\mathop{\vrule width 5pt height 3 pt depth -2.6pt \kern -6pt
\intop}\nolimits_{#1}} {\mathop{\vrule width 5pt height 3 pt depth
-2.6pt \kern -6pt \intop}\nolimits_{#1}}}
\begin{document}

\title{{\bf Cavity type problems ruled by infinity Laplacian operator}}

\author{\it \smallskip \\
G. C. Ricarte\footnote{\noindent \textsc{Gleydson Chaves Ricarte} --
Universidade Federal do Cear\'{a}, Av. Humberto Monte s/n, Campus do Pici - Bloco 914, 60455-760 Fortaleza, CE, Brazil.
\texttt{E-mail: ricarte@mat.ufc.br}
},\,\,\,\,J. V. Silva\footnote{\noindent \textsc{Jo\~{a}o V\'{i}tor da Silva} -- Universidad de Buenos Aires, Departmento de Matem\'{a}tica, Ciudad Universitaria-Pabell\'{o}n I-(C1428EGA) - Buenos Aires, Argentina. \noindent \texttt{E-mail: jdasilva@dm.uba.ar}
}\,\,\hbox{ and }
  R. Teymurazyan\footnote{\noindent \textsc{Rafayel Teymurazyan} -- Universidade de Coimbra, Departmento de Matem\'{a}tica, 3001-501 Coimbra, Portugal. \texttt{E-mail: rafayel@utexas.edu}
}}

\maketitle

\begin{abstract}
We study a singularly perturbed problem related to infinity Laplacian operator with prescribed boundary values in a region. We prove that solutions are locally (uniformly) Lipschitz continuous, they grow as a linear function, are strongly non-degenerate and have porous level surfaces. Moreover, for some restricted cases we show the finiteness of the $(n-1)$-dimensional Hausdorff measure of level sets. The analysis of the asymptotic limits is carried out as well.
\newline
\newline
\noindent \textbf{Keywords:} Infinity Laplacian, Lipschitz regularity, singularly perturbed problems, Hausdorff measure.
\newline
\newline
\noindent \textbf{AMS Subject Classifications 2010:} 35J60, 35J75, 35B65, 35R35.



\end{abstract}

\section{Introduction}

\hspace{0.3cm} In this paper we study inhomogeneous singularly perturbed problems ruled by the \textit{Infinity Laplacian}, which is defined as follows:
$$
\displaystyle \Delta_{\infty} u \defeq (Du)^TD^2uDu = \sum_{i,j=1}^n u_{x_i} u_{x_j} u_{x_i x_j}.
$$
More precisely, we study {\it weak} solutions to
\begin{equation}\label{Equation Pe} \tag{$\pe$}
\left\{
\begin{array}{rcccl}
\Delta_{\infty}u^{\varepsilon}(x) & = & \zeta_{\varepsilon}(x, u^{\varepsilon}) & \mbox{in} & \Omega,\\
u^{\varepsilon}(x) & = & \varphi^{\varepsilon}(x) & \mbox{on} & \partial \Omega,
\end{array}
\right.
\end{equation}
where $\Omega\subset\mathbb{R}^n$ is a bounded domain with a smooth boundary, and $0 \leq \varphi^{\varepsilon} \in C(\overline{\Omega})$ with $\|\varphi^{\varepsilon}\|_{L^{\infty}(\Omega)} \le \mathcal{A}$, for some constant $\mathcal{A}>0$.
The reaction term $\zeta_\varepsilon$ represents the singular perturbation of the model. We are interested in singular behaviors of order $\mbox{O} \left( \frac{1}{\varepsilon} \right)$ along $\e$-level layers $\{u_\varepsilon\sim \varepsilon\}$,  hence we consider (smooth) singular reaction terms $\zeta_\varepsilon \colon \Omega \times \mathbb{R}_{+} \to \mathbb{R}_{+}$ satisfying

\begin{equation}\label{1.2}
	 0 \le \zeta_{\varepsilon}(x, t) \le \frac{\mathcal{B}}{\varepsilon} \chi_{(0,\varepsilon)}(t) + \mathcal{C}, \quad \forall \,\,(x,t) \in \Omega \times \mathbb{R}_{+},
\end{equation}
for some constants $\mathcal{B}, \, \mathcal{C} \ge 0$. Clearly $\zeta_\varepsilon \equiv 0$ satisfies \eqref{1.2}, therefore, to insure that the reaction term is genuinely singular,  we will assume also that
\begin{equation} \label{1.3}
	\mathfrak{R} \defeq \inf\limits_{\Omega \times [a,b]}  \varepsilon \zeta_\varepsilon(x, \varepsilon t)  > 0,
\end{equation}
for some $0\le a<b$, and $\mathfrak{R}$ does not depend on $\varepsilon$.  Heuristically, \eqref{1.3} says that the singular term behaves asymptotically as $\sim \e^{-1} \chi_{(0,\e)}$ plus a nonnegative noise that stays uniformly bounded away from infinity. Singular reaction terms is built up as approximation of unity
\begin{equation} \label{1.4}
	\zeta_{\varepsilon}(x, t) \defeq  \frac{1}{\varepsilon} \beta \left(\frac{t}{\varepsilon}\right) + g_\varepsilon(x),
\end{equation}
are particular (simpler) cases covered by analysis to be developed herein (usually  $\beta$ is a nonnegative smooth real function with $\supp ~\beta = [0,1]$, and $0 \leq c_0 \le g_\varepsilon(x) \le c_1 < \infty$). It is easy to check that the reaction term written in \eqref{1.4} satisfies \eqref{1.2} and \eqref{1.3}.

We were motivated by the study of the following over-determined problem: given $\Omega \subset \mathbb{R}^n$ a domain, functions $0 \leq f, \varphi \in C(\overline{\Omega})$ and $0< g \in C(\overline{\Omega})$, we would like to find a compact ``hyper-surface'' $ \Gamma \defeq \partial \Omega^{\prime} \subset \Omega$ such that the boundary value problem
\begin{equation}  \label{Free}
\left\{
\begin{array}{rcccc}
\Delta_{\infty} u(x)  &=&  f(x) & \mbox{in} & \Omega \backslash \Omega^{\prime}\\
u(x) & = & \varphi(x) & \mbox{on} & \partial \Omega\\
u(x) & = & 0 &\mbox{on} & \Omega^{\prime}\\
\frac{\partial u}{\partial \nu}(x) & = & g(x) & \mbox{on} & \Gamma
\end{array}
\right.
\end{equation}
has a solution. Possible limiting functions coming from \ref{Equation Pe} are natural choices to solve the above problem with $\Gamma=\partial \{u>0\}$ (the free boundary).

\begin{figure}[h]
\begin{center}

\psscalebox{0.5 0.42} 
{
\begin{pspicture}(0,-4.1)(9.3,4.1)
\definecolor{colour4}{rgb}{0.8,0.8,0.8}
\definecolor{colour3}{rgb}{0.8,0.8,1.0}
\definecolor{colour2}{rgb}{0.4,0.4,0.4}
\definecolor{colour6}{rgb}{0.6,1.0,1.0}
\definecolor{colour5}{rgb}{0.6,0.8,1.0}
\definecolor{colour1}{rgb}{1.0,0.2,0.2}
\psframe[linecolor=black, linewidth=0.04, dimen=outer](9.3,2.1)(0.0,-4.1)
\pscustom[linecolor=black, linewidth=0.04]
{
\newpath
\moveto(8.1,-1.1)
}

\pscustom[linecolor=black, linewidth=0.04]
{
\newpath
\moveto(9.3,-1.5)
}
\pscustom[linecolor=black, linewidth=0.04]
{
\newpath
\moveto(14.9,3.3)
}

\pscustom[linecolor=black, linewidth=0.04]
{
\newpath
\moveto(14.9,2.9)
}
\psbezier[linecolor=colour2, linewidth=0.04, fillstyle=gradient, gradlines=2000, gradbegin=colour3, gradend=colour4](1.6908844,-0.53186655)(3.1618953,0.90770125)(3.113053,1.622456)(5.204675,1.2099068)(7.2962976,0.7973577)(9.114757,-0.26643416)(7.6437454,-1.706002)(6.1727347,-3.1455698)(2.4471457,-3.409379)(1.4782214,-3.4827323)(0.5092973,-3.5560858)(0.21987355,-1.9714344)(1.6908844,-0.53186655)
\psbezier[linecolor=colour1, linewidth=0.04, fillstyle=gradient, gradlines=2000, gradbegin=colour5, gradend=colour6](2.3646572,-1.4787042)(2.3137617,-2.416411)(4.1486554,-2.2604098)(5.1705256,-1.6616218)(6.1923957,-1.0628339)(6.6671977,0.13458377)(5.2729506,0.22547626)(3.8787034,0.31636873)(2.4155526,-0.5409977)(2.3646572,-1.4787042)
\rput[bl](3.7,-1.1){$u \equiv 0$}
\rput[bl](5.2,-0.2){$\Omega^{\prime}$}
\rput[bl](1.3,-2.9){$\Delta_{\infty} u = f \,\,\mbox{in} \,\, \{u>0\}$}
\rput[bl](6.1,1.0){$u = \varphi \,\, \mbox{on} \,\, \partial \Omega$}
\rput[bl](8.5,-3.5){$\mathbb{R}^n$}
\psline[linecolor=black, linewidth=0.04](5.483535,-1.4205831)(5.809861,-1.6918935)
\rput{-17.110401}(0.7442126,1.635026){\psdots[linecolor=black, dotstyle=triangle*, dotsize=0.12820485](5.8063936,-1.692062)}
\rput[bl](5.5,-2.0){$u_{\nu} = g \,\, \mbox{on} \,\, \Gamma = \partial\{u>0\}$}
\rput[bl](3.7,0.7){$\Omega$}
\end{pspicture}
}

\end{center}
\end{figure}

It is important to highlight that, unlike \cite{BdBM} and \cite{MPS}, we can not study \eqref{Equation Pe} as a limit of ``variational solutions'' of the corresponding inhomogeneous problem with $p$-Laplacian on the left hand side of \eqref{Equation Pe}, because several geometric properties and estimates deteriorate, when $p\rightarrow +\infty$, since they depend on $p$ (see, for example, \cite{DPS03, K06, MoWan1}). This indicates the importance of the non-variational approach.

Viscosity solutions of \eqref{Equation Pe} exhibit two ``distinct'' free boundaries: the first one is the set of critical points $\mathcal{C}(u^{\varepsilon}) \defeq \{x \in \Omega \suchthat \nabla u^{\varepsilon}(x) = 0\}$, and the second one is the ``physical'' free boundary, $\Gamma_{\varepsilon} = \{u^{\varepsilon} \thicksim \varepsilon \}$ ($\varepsilon$-level surfaces). We are able to control $u^{\varepsilon}$ in terms of $\dist(x, \Gamma_{\varepsilon})$ and see that these two free boundaries do not intersect.

A problem similar to \eqref{Equation Pe} for a fully nonlinear operators in the left hand side was studied in recent years. In fact, in \cite{RT11} the authors study fully nonlinear uniformly elliptic equations of the form
$$
    F(x, D^2 u^{\varepsilon}) = \zeta_{\varepsilon}(u^{\varepsilon}) \quad \mbox{in} \quad \Omega,
$$
where $\zeta_{\varepsilon} \sim \frac{1}{\varepsilon}\chi_{(0,\varepsilon)}$.
They prove several analytical and geometrical properties of solutions (see also \cite{RS2} for global regularity character and \cite{MoWan1} for an approach with inhomogeneous forcing term). A non-variational setting of the problem was studied in \cite{ART}, where the authors obtain existence and optimal regularity results for the class of fully nonlinear, anisotropic degenerate elliptic problems
$$
|\nabla u^{\varepsilon}|^{\gamma}F(D^2 u^{\varepsilon}) = \zeta_{\varepsilon}(x, u^{\varepsilon}) \quad \mbox{in} \quad \Omega, \,\, \mbox{with} \,\, \gamma\geq 0.
$$
These summarize current results for singularly perturbed non-variational problems.

We also remark that although regularity of infinity harmonic functions is well studied (see \cite{ESav, ES, S05}), regularity results for the inhomogeneous problem $\Delta_{\infty} u=f$ in $\Omega$, are relatively recent and less developed. In this direction it was shown in \cite{Lind} that blow-ups are linear, if $f\in C(\Omega) \cap L^{\infty}(\Omega)$. As a consequence, viscosity solutions of the inhomogeneous problem are Lipschitz continuous and also everywhere differentiable, if $f \in C^1(\Omega) \cap L^{\infty}(\Omega)$. In \cite{BM} Lipschitz regularity was proved for a more general right hand side $f: \Omega \times \R \to \R$ provided $f \in C(\Omega \times \R) \cap L^{\infty}(\Omega \times \R)$.

This paper is organized as follows: in section \ref{DefPreRes} we state some preliminary results, which we use later. In section \ref{Sct Lip} we prove optimal Lipschitz regularity (uniformly in $\varepsilon$). In section \ref{Sct Nondeg} we prove geometric non-degeneracy properties of solutions. As a consequence a Harnack type inequality and porosity of level surfaces are proved. In section \ref{HausEst} we show that for some restricted cases the $(n-1)$-dimensional Hausdorff measure of the free boundary is finite. The corresponding asymptotic limit as $\varepsilon \to 0^{+}$ in \eqref{Equation Pe} is studied in the Section \ref{LimFBP}. We finish the paper analyzing the one-dimensional profile for the limiting free boundary problem in section \ref{r5.2}.

\section{Preliminary results}\label{DefPreRes}

\hspace{0.3cm}We start with the definition of the solution.

\begin{definition}A function $u \in C(\Omega)$ is called a viscosity sub-solution (super-solution) of
$$
 \Delta_{\infty} u = f(x, u(x)) \quad \mbox{in} \quad \Omega,
$$
if whenever $\phi \in C^2(\Omega)$ and $u-\phi$ has a local maximum (minimum) at $x_0 \in \Omega$ there holds
$$
 \Delta_{\infty} \phi(x_0) \geq f(x_0, \phi(x_0)) \quad (\mbox{resp.} \leq f(x_0, \phi(x_0))).
$$
A function $u$ is a viscosity solution when it is a viscosity sub and super-solution at the same time.
\end{definition}
As it was shown in \cite{LW}, the Dirichlet problem
$$
\left\{
\begin{array}{rcccl}
\Delta_{\infty} v(x) & = & f(x) & \mbox{in} & \Omega\\
v(x) & = & g(x) & \mbox{on} & \partial \Omega
\end{array}
\right.
$$
has a unique viscosity solution for $\Omega \subset \R^n$ bounded, provided $g \in C(\partial \Omega)$ and either $\displaystyle \sup_{\Omega} f <0$ or $\displaystyle \inf_{\Omega} f >0$. However, the uniqueness may fail, if $f$ changes the sign (see the counter-example in \cite[Appendix A]{LW}).

We recall a comparison principle result:
\begin{proposition}[{\bf Comparison Principle}, see \cite{BM}, \cite{LW}]\label{CompPrinc} Let $f \in C(\Omega)$ such that $f>0, f<0$ or $f=0$ in $\Omega$. If $u, v \in C(\overline{\Omega})$ satisfy
\begin{equation}\label{eqCompPrinc1}
\Delta_{\infty} u(x) \geq f(x) \geq \Delta_{\infty} v(x) \,\,\, \mbox{in} \,\, \Omega,
\end{equation}
then
\begin{equation}\label{eqCompPrinc1}
\displaystyle \sup_{\Omega}(u-v) = \sup_{\partial \Omega}(u-v).
\end{equation}
\end{proposition}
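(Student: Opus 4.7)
The natural plan is to argue by contradiction via a Jensen-style doubling of variables. Suppose $M \defeq \sup_\Omega(u-v) > m \defeq \sup_{\partial\Omega}(u-v)$. By continuity of $u-v$ on the compact set $\overline{\Omega}$ the value $M$ is attained at some interior point $x_0\in\Omega$. For each $\varepsilon>0$ I would consider the penalized functional
$$
\Phi_\varepsilon(x,y) \defeq u(x) - v(y) - \frac{|x-y|^{4}}{4\varepsilon}
$$
on $\overline{\Omega}\times\overline{\Omega}$, with maximizer $(x_\varepsilon,y_\varepsilon)$. Standard penalty-function estimates give $(x_\varepsilon,y_\varepsilon)\to(x_0,x_0)$, $|x_\varepsilon-y_\varepsilon|^{4}/\varepsilon\to 0$, and both points remain interior for small $\varepsilon$.

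The Crandall--Ishii maximum principle for semicontinuous functions then delivers a common gradient $p_\varepsilon \defeq \varepsilon^{-1}|x_\varepsilon-y_\varepsilon|^{2}(x_\varepsilon-y_\varepsilon)$ and symmetric matrices $X_\varepsilon\le Y_\varepsilon$ with $(p_\varepsilon,X_\varepsilon)$ in the closed second-order superjet of $u$ at $x_\varepsilon$ and $(p_\varepsilon,Y_\varepsilon)$ in the closed second-order subjet of $v$ at $y_\varepsilon$. The viscosity inequalities then read
$$
\langle X_\varepsilon p_\varepsilon, p_\varepsilon\rangle \ge f(x_\varepsilon), \qquad \langle Y_\varepsilon p_\varepsilon, p_\varepsilon\rangle \le f(y_\varepsilon),
$$
so $0\le \langle (Y_\varepsilon-X_\varepsilon)p_\varepsilon, p_\varepsilon\rangle \le f(y_\varepsilon)-f(x_\varepsilon)\to 0$. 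This only recovers the non-strict inequality $0\le 0$, so one must upgrade by exploiting the trichotomy on $f$.

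For $f\equiv 0$ the statement is Jensen's classical comparison for absolutely minimizing functions, which is promoted to a contradiction through the cone comparison plus an auxiliary sup/inf-convolution argument. If $f>0$ (the case $f<0$ being symmetric by interchanging the roles of $u$ and $v$) I would inject strictness by replacing $u$ with the perturbation $u_\delta(x)\defeq u(x) + \delta|x-x_0|^{4/3}$: a direct radial computation gives $\Delta_\infty|x-x_0|^{4/3}\equiv \tfrac{64}{81}$, so in the viscosity sense $u_\delta$ satisfies $\Delta_\infty u_\delta \ge f + c\delta$ locally near $x_0$, while $u_\delta-v$ still attains an interior maximum close to $x_0$ for $\delta$ small. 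Re-running the doubling argument with $u_\delta$ in place of $u$ would then produce $c\delta \le f(y_\varepsilon)-f(x_\varepsilon)+o(1)\to 0$, the desired contradiction. The main technical obstacle is precisely this perturbation step: because $\Delta_\infty$ is nonlinear and degenerate, one does not have $\Delta_\infty(u+\delta\psi)\simeq \Delta_\infty u+\delta\Delta_\infty\psi$, so the improvement of the right-hand side has to be justified entirely at the level of test functions, and the potentially vanishing gradient $p_\varepsilon$ forces a separate treatment of critical-point jets, exactly along the lines carried out in \cite{LW, BM}.
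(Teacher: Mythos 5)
The paper does not prove Proposition \ref{CompPrinc}; it is imported verbatim from \cite{BM} and \cite{LW}, so the comparison has to be against those references. Your setup (Jensen doubling with the quartic penalty $|x-y|^4/4\varepsilon$, Crandall--Ishii, and the resulting non-strict inequality $0\le f(y_\varepsilon)-f(x_\varepsilon)\to 0$) is the right skeleton and matches the literature. The case $f\equiv0$ delegated to Jensen's theorem is fine.

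The genuine gap is exactly the step you flag but do not repair. The additive perturbation $u_\delta(x)=u(x)+\delta|x-x_0|^{4/3}$ does \emph{not} yield $\Delta_\infty u_\delta\ge f+c\delta$ in the viscosity sense: since $\Delta_\infty$ is the cubic form $\langle D^2w\,Dw,Dw\rangle$, for a test function $\phi$ touching $u_\delta$ from above one gets (writing $\psi(x)=|x-x_0|^{4/3}$, so $\phi-\delta\psi$ touches $u$ from above)
\[
\Delta_\infty\phi
=\Delta_\infty(\phi-\delta\psi)
+\underbrace{\text{cross terms mixing } D(\phi-\delta\psi),\,D^2(\phi-\delta\psi),\,\delta D\psi,\,\delta D^2\psi}_{\text{no sign}}
+\delta^3\Delta_\infty\psi,
\]
and the cross terms have no sign, so the computed constant $\tfrac{64}{81}$ for $\Delta_\infty\psi$ never reaches the inequality. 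In short, the operator is not additively sub-linear, so you cannot ``inject strictness'' by adding a strict subsolution, and your proof stalls precisely where you say it does.

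The repair used in \cite{LW,BM} exploits the \emph{cubic homogeneity} $\Delta_\infty(\lambda w)=\lambda^3\Delta_\infty w$ instead. For the case $f>0$, set $M_0=\max_{\overline\Omega}|v|$ and, for $\lambda\in(0,1)$ close to $1$, replace $v$ by $v_\lambda:=\lambda v-(1-\lambda)M_0$. Then $\Delta_\infty v_\lambda=\lambda^3\Delta_\infty v\le\lambda^3 f<f$ in $\Omega$, i.e.\ $v_\lambda$ is a \emph{strict} supersolution; at the same time $u-v_\lambda=(u-v)+(1-\lambda)(v+M_0)$, so for $\lambda$ near $1$ the interior maximum persists and still exceeds the boundary supremum. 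Re-running the doubling argument with $v_\lambda$ produces $f(x_\varepsilon)\le\lambda^3 f(y_\varepsilon)$, and sending $\varepsilon\to0$ gives $(1-\lambda^3)f(\hat x)\le0$ at an interior point $\hat x$, contradicting $f>0$. (If one only has $f>0$ pointwise rather than $\inf_\Omega f>0$, this is still fine: the contradiction is local.) The case $f<0$ is symmetric, scaling $u$ instead. Replacing your additive $|x-x_0|^{4/3}$-perturbation by this affine rescaling of $v$ closes the gap; the rest of your outline can stand.

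Two smaller points worth noting: $\psi(x)=|x-x_0|^{4/3}$ is only $C^{1,1/3}$ at $x_0$, so even setting aside the nonlinearity issue, touching $u_\delta$ from above at points near $x_0$ with $C^2$ test functions requires care; and in the degenerate branch $p_\varepsilon=0$ the Crandall--Ishii jets yield $f(x_\varepsilon)\le 0\le f(y_\varepsilon)$ directly, which already contradicts $f>0$ and should be stated as a separate, easy sub-case rather than folded into the generic one.
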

We construct solutions by Perron's method. We state the following theorem independently of the \eqref{Equation Pe} context, since it may be of independent interest. For the proof we refer to \cite{RT11} (see also \cite{ART}).
\begin{theorem}\label{PerMeth} Let $f \in C^{0, 1}(\Omega \times [0, \infty)) $ be a bounded real function. Suppose that there exist a viscosity sub-solution $\underline{u} \in C(\overline{\Omega}) \cap C^{0, 1}(\Omega)$ and super-solution $\overline{u} \in C(\overline{\Omega}) \cap C^{0, 1}(\Omega)$ to $\Delta_{\infty} u = f(x, u)$ satisfying
$\underline{u} = \overline{u} = \varphi \in C(\partial \Omega)$. Define the class of functions
$$
\mathcal{S}_{\varphi}^{f} \defeq \left\{ w \in C(\overline{\Omega}) \;\middle|\; \begin{array}{c}
w \text{ is a viscosity super-solution to } \\
\Delta_{\infty} u(x) = f(x, u) \text{ in } \Omega \text{ such that } \underline{u} \le w \le \overline{u}\\
\text{ and } w = \varphi \text{ on } \partial \Omega
\end{array}
\right\}.
$$
Then,
\begin{equation}\label{2.1}
	u(x) \defeq \inf_{w \in \mathcal{S}_{\varphi}^{f}} w(x), \,\, \mbox{for} \,\, x \in \overline{\Omega}
\end{equation}
is a continuous viscosity solution to $\Delta_{\infty} u(x) = f(x, u)$ in $\Omega$ with $u=\varphi$ continuously on $\partial \Omega$.
\end{theorem}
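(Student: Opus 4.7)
The plan is to follow the classical Perron method adapted to $\Delta_{\infty}$, using $\underline{u}, \overline{u}$ as barriers. Since $\overline{u} \in \mathcal{S}_{\varphi}^{f}$, the class is non-empty, so $u$ defined in \eqref{2.1} is well-defined and satisfies $\underline{u} \le u \le \overline{u}$; in particular $u = \varphi$ on $\partial \Omega$, with continuous boundary attainment inherited from the sandwich. The core of the proof is to show that the lower and upper semicontinuous envelopes $u_{\ast}$ and $u^{\ast}$ are respectively viscosity super- and sub-solutions of $\Delta_{\infty} u = f(x,u)$; these envelopes then coincide via a comparison principle, yielding continuity of $u$ and the desired solution property.

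First I would verify that $u_{\ast}$ is a viscosity super-solution. This is the standard stability of viscosity super-solutions under pointwise infima: given a smooth $\phi$ touching $u_{\ast}$ strictly from below at an interior $x_0$, extract sequences $w_k \in \mathcal{S}_{\varphi}^{f}$ and $x_k \to x_0$ with $w_k(x_k) \to u_{\ast}(x_0)$, localize test functions around $x_k$, apply the super-solution property to each $w_k$, and pass to the limit using continuity of $\Delta_{\infty}$ on $C^2$ and of $f$.

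The essential step is showing $u^{\ast}$ is a sub-solution, argued by contradiction. Suppose $\phi \in C^2$ has $u^{\ast} - \phi$ attaining a strict local maximum at an interior point $x_0$ with $u^{\ast}(x_0) = \phi(x_0)$ and $\Delta_{\infty} \phi(x_0) < f(x_0, \phi(x_0))$. First rule out the edge case $u^{\ast}(x_0) = \underline{u}(x_0)$: in that case $\phi$ would also touch $\underline{u}$ from above at $x_0$, forcing the opposite inequality by the sub-solution property of $\underline{u}$, a contradiction. Hence $u^{\ast}(x_0) > \underline{u}(x_0)$, and by strictness of the maximum, continuity of $\Delta_{\infty}\phi$, and the Lipschitz dependence of $f$ in its second argument, one can choose $r, \eta > 0$ small so that $\phi - \eta$ is a classical strict super-solution in $B_r(x_0)$, satisfies $\phi - \eta \ge \underline{u}$ in $B_r(x_0)$, and $\phi - \eta > u^{\ast}$ on $\partial B_r(x_0)$. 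Define
$$
v(x) := \begin{cases} \min\{u(x), \phi(x) - \eta\}, & x \in B_r(x_0), \\ u(x), & x \in \Omega \setminus B_r(x_0). \end{cases}
$$
The strict separation on $\partial B_r(x_0)$ forces $v = u$ in a neighborhood of $\partial B_r(x_0)$, so $v \in C(\overline{\Omega})$ with $v = \varphi$ on $\partial\Omega$ and $\underline{u} \le v \le \overline{u}$; moreover $v$ is a viscosity super-solution, being the pointwise minimum of two super-solutions in $B_r(x_0)$ and equal to $u$ outside. Thus $v \in \mathcal{S}_{\varphi}^{f}$. However $v(x_0) \le \phi(x_0) - \eta < u^{\ast}(x_0)$, so by definition of $u^{\ast}$ there exists $x_{\delta}$ near $x_0$ with $u(x_{\delta}) > v(x_{\delta})$, contradicting that $u$ is the pointwise infimum over $\mathcal{S}_{\varphi}^{f}$.

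Finally, applying Proposition \ref{CompPrinc} (or a suitable refinement permitting $f = f(x,u)$) to $u^{\ast}$ and $u_{\ast}$, which agree with $\varphi$ on $\partial \Omega$, gives $u^{\ast} \le u_{\ast}$ in $\Omega$; together with $u_{\ast} \le u^{\ast}$ this yields $u = u_{\ast} = u^{\ast}$, so $u$ is a continuous viscosity solution. The main obstacle is the sub-solution step: the delicate construction of $v$, which must simultaneously be a classical strict super-solution, dominate $\underline{u}$ inside $B_r(x_0)$, and lie strictly above $u^{\ast}$ on $\partial B_r(x_0)$ — all three hinging on the strictness of the local maximum and the Lipschitz structure of $f$. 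A secondary difficulty is the comparison step, since Proposition \ref{CompPrinc} as stated is for $f = f(x)$ of fixed sign; applying it to $f(x, u(x))$ may require localization, monotonization, or a perturbation argument.
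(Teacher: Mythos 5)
The paper does not prove Theorem \ref{PerMeth} in the text; it refers the reader to \cite{RT11} (see also \cite{ART}). Your reconstruction follows the standard Perron scheme, which is indeed the strategy used in those references, so the approach is right in spirit. However, there is a genuine gap at the heart of the bump (sub-solution) step.

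The problem is that the competitor $v := \min\{u, \phi - \eta\}$ in $B_r(x_0)$ (and $= u$ outside) is built from the pointwise infimum $u = \inf_{w\in\mathcal{S}_\varphi^f}w$, which at this stage is only known to be \emph{upper} semicontinuous (an infimum of continuous functions). Your sentence ``so $v\in C(\overline\Omega)$'' tacitly assumes that $u$ is continuous — which is exactly what the theorem is supposed to deliver, so the argument as written is circular. Since the class $\mathcal{S}_\varphi^f$ demands $w\in C(\overline\Omega)$, a usc $v$ is not a legal competitor. Relatedly, the assertion that $v$ is a viscosity super-solution in $B_r(x_0)$ because it is the minimum of two super-solutions presupposes that $u$ itself is a super-solution in that ball, whereas the stability-under-infima argument only gives that $u_*$, the lower semicontinuous envelope, is. The standard way to close this hole is either (a) enlarge the competitor class to usc/Ishii-type super-solutions, carry out the bump argument there, and only at the very end identify the infimum over the enlarged class with the infimum over $\mathcal{S}_\varphi^f$; or (b) do not use $u$ in the construction at all: choose finitely many $w_1,\dots,w_m\in\mathcal{S}_\varphi^f$, using the definition of the infimum together with compactness of $\partial B_r(x_0)$, so that $w:=\min_j w_j$ is a genuine (continuous) member of $\mathcal{S}_\varphi^f$ satisfying both $w(x_0)<\phi(x_0)-\eta+\sigma$ and $w<\phi-\eta$ on $\partial B_r(x_0)$, and then set $v:=\min\{w,\phi-\eta\}$ in $B_r(x_0)$ and $v:=w$ outside; this $v$ is continuous, a super-solution, and strictly smaller than $w\ge u$ at $x_0$. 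Carrying out this selection does require a little care because $u$ is only usc, and it is the content of the cited proof.

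Your closing observation about the comparison step is also a real issue rather than a secondary one: Proposition \ref{CompPrinc} requires $f$ of fixed sign, and the boundedness hypothesis on $f$ in Theorem \ref{PerMeth} alone does not provide this (indeed $\zeta_\varepsilon$ is allowed to vanish). One cannot simply invoke the comparison principle for $u^*$ versus $u_*$ without additional structure; in the cited references the continuity of the Perron infimum is obtained by exploiting the strict-super-solution perturbation of the test function more carefully (effectively pushing $f$ to a strictly positive or strictly negative surrogate locally), rather than by a global comparison between $u^*$ and $u_*$. As written, your final paragraph leaves the continuity of $u$ unresolved.
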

Existence of the solution to problem \eqref{Equation Pe} follows by choosing $\underline{u} \defeq \underline{u}^{\varepsilon}$ and $\overline{u} \defeq \overline{u}^{\varepsilon}$ respectively as solutions to the following boundary value problems:
$$
\begin{array}{lll}
\left\{
\begin{array}{rcccl}
\Delta_{\infty} \underline{u}^{\varepsilon} & = &  \sup\limits_{\Omega \times [0, \infty)} \zeta_{\varepsilon} & \mbox{in} & \Omega\\
\underline{u}^{\varepsilon} & = & \varphi^{\varepsilon} & \mbox{on} & \partial \Omega
\end{array}
\right.

& \mbox{and} &

\left\{
\begin{array}{rcccl}
\Delta_{\infty} \overline{u}^{\varepsilon} & = & 0 & \mbox{in} & \Omega\\
\overline{u}^{\varepsilon} & = & \varphi^{\varepsilon} & \mbox{on} & \partial \Omega.
\end{array}
\right.
\end{array}
$$

Then $\underline{u}^\varepsilon \in C(\overline{\Omega})\cap C^{0, 1}(\Omega)$ and $\overline{u}^\varepsilon \in C(\overline{\Omega})\cap C^{0, 1}(\Omega)$ (see \cite{BM}, \cite{Lind} and \cite{LW}) are respectively a viscosity sub and super-solutions of \eqref{Equation Pe}. We state this as a theorem:
\begin{theorem}\label{ExistMinSol}Let $\Omega \subset \R^n$ be a bounded Lipschitz domain and let $\varphi^\varepsilon \in C(\partial \Omega)$ be a nonnegative boundary datum. Then for each fixed $\varepsilon>0$ there exists a (nonnegative) viscosity solution $u^{\varepsilon} \in C(\overline{\Omega})$ to \eqref{Equation Pe}.
\end{theorem}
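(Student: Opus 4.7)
The plan is to verify the hypotheses of Theorem \ref{PerMeth} with $f(x,t)=\zeta_\varepsilon(x,t)$, using the two auxiliary problems stated just above the theorem to produce the required ordered sub/super-solution pair.

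First I would establish that $\underline{u}^\varepsilon$ and $\overline{u}^\varepsilon$ are well-defined viscosity solutions of their respective Dirichlet problems, together with the claimed regularity. Condition \eqref{1.3} ensures $\sup_{\Omega\times[0,\infty)}\zeta_\varepsilon>0$, which is a strictly positive constant; hence \cite{LW} supplies a unique viscosity solution $\underline{u}^\varepsilon\in C(\overline{\Omega})$ to $\Delta_\infty \underline{u}^\varepsilon=\sup \zeta_\varepsilon$ with $\underline{u}^\varepsilon=\varphi^\varepsilon$ on $\partial\Omega$, and the classical theory of the infinity Laplacian gives a unique infinity-harmonic $\overline{u}^\varepsilon\in C(\overline{\Omega})$ with the same boundary datum. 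Interior Lipschitz regularity of both functions is provided by \cite{BM,Lind}; continuity up to the boundary follows from the Lipschitz character of $\partial\Omega$ via standard barrier constructions (e.g., cone comparison).

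Second, I would order the barriers. Since both functions agree with $\varphi^\varepsilon$ on $\partial\Omega$ and
$$
\Delta_\infty \underline{u}^\varepsilon \;=\; \sup_{\Omega\times[0,\infty)}\zeta_\varepsilon \;\geq\; 0 \;=\; \Delta_\infty \overline{u}^\varepsilon \quad \text{in } \Omega,
$$
Proposition \ref{CompPrinc} applied with $f\equiv 0$ gives $\sup_\Omega(\underline{u}^\varepsilon-\overline{u}^\varepsilon)=\sup_{\partial\Omega}(\underline{u}^\varepsilon-\overline{u}^\varepsilon)=0$, hence $\underline{u}^\varepsilon\le \overline{u}^\varepsilon$ in $\overline{\Omega}$. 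Next I would verify they are sub/super-solutions of \eqref{Equation Pe} itself: for $\underline{u}^\varepsilon$, in the viscosity sense one has $\Delta_\infty \underline{u}^\varepsilon=\sup\zeta_\varepsilon\ge \zeta_\varepsilon(x,\underline{u}^\varepsilon(x))$, while for $\overline{u}^\varepsilon$, using $\zeta_\varepsilon\ge 0$ pointwise, $\Delta_\infty\overline{u}^\varepsilon=0\le \zeta_\varepsilon(x,\overline{u}^\varepsilon(x))$. Applying Theorem \ref{PerMeth} to the class $\mathcal{S}_{\varphi^\varepsilon}^{\zeta_\varepsilon}$ yields a continuous viscosity solution $u^\varepsilon$ of \eqref{Equation Pe} attaining $\varphi^\varepsilon$ continuously on $\partial\Omega$; nonnegativity is built in because $u^\varepsilon\ge \underline{u}^\varepsilon\ge 0$ (the latter by another application of Proposition \ref{CompPrinc} comparing $\underline{u}^\varepsilon$ with the identically zero function, which is infinity-harmonic).

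The only step that requires care is the regularity hypothesis on the reaction term imposed by Theorem \ref{PerMeth}, namely $\zeta_\varepsilon\in C^{0,1}(\Omega\times[0,\infty))$. For prototypical singular perturbations of the form \eqref{1.4} with smooth $\beta$ and Lipschitz $g_\varepsilon$ this is immediate, and the general class \eqref{1.2} can be handled by an approximation argument (mollifying $\zeta_\varepsilon$ in $t$, obtaining a solution for each mollification, and passing to the limit using the stability of viscosity solutions together with the uniform Lipschitz bound to be proved in the next section). This is the only mild technical obstacle in the otherwise routine Perron-type construction.
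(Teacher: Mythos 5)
Your approach is essentially the paper's: you use the same two auxiliary boundary value problems to produce $\underline{u}^\varepsilon$ and $\overline{u}^\varepsilon$, verify they are respectively a viscosity sub- and super-solution of \eqref{Equation Pe}, order them via Proposition \ref{CompPrinc}, and invoke Theorem \ref{PerMeth}. The verification of the sub/super-solution property, the ordering argument via $\sup\zeta_\varepsilon\geq 0\geq 0$, and your remark on the $C^{0,1}$ hypothesis on the reaction term (the paper assumes $\zeta_\varepsilon$ smooth, so for fixed $\varepsilon$ this is automatic on bounded sets) are all in order.

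There is, however, a genuine error in your nonnegativity argument. You claim $\underline{u}^\varepsilon\geq 0$ ``by comparing $\underline{u}^\varepsilon$ with the identically zero function.'' Proposition \ref{CompPrinc} gives $\sup_\Omega(u-v)=\sup_{\partial\Omega}(u-v)$ when $\Delta_\infty u\geq f\geq \Delta_\infty v$ for an $f$ of constant sign. To conclude $\underline{u}^\varepsilon\geq 0$ you would have to bound $\sup_\Omega(0-\underline{u}^\varepsilon)$, i.e.\ take $u\equiv 0$, $v=\underline{u}^\varepsilon$ and produce an $f$ with $0=\Delta_\infty 0\geq f\geq \Delta_\infty\underline{u}^\varepsilon=\sup\zeta_\varepsilon>0$, which is impossible. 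In fact $\underline{u}^\varepsilon$ is generally \emph{not} nonnegative: already in one dimension, $\Delta_\infty u=u''(u')^2=c>0$ on $(-1,1)$ with zero boundary data gives $u(x)=\tfrac{3}{4}(3c)^{1/3}\left(|x|^{4/3}-1\right)<0$ in the interior. So the inequality $u^\varepsilon\geq\underline{u}^\varepsilon$ yields no lower bound. The correct route (which the paper defers to Lemma \ref{NonnBound}) exploits that $\zeta_\varepsilon\colon\Omega\times\mathbb{R}_+\to\mathbb{R}_+$: extending $\zeta_\varepsilon(x,t)$ by zero for $t<0$, the solution is infinity-harmonic on the open set $\{u^\varepsilon<0\}$; on the boundary of that set $u^\varepsilon$ is either $0$ (interior part) or $\varphi^\varepsilon\geq 0$ (on $\partial\Omega$), so the minimum principle for infinity-harmonic functions forces $\{u^\varepsilon<0\}=\emptyset$. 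You should replace your nonnegativity step with this argument.
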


As a consequence of Proposition \ref{CompPrinc}, we get (uniform) boundness of any family of viscosity solutions.

\begin{lemma}\label{NonnBound}
Let $u^{\varepsilon}$ be a viscosity solution to  \eqref{Equation Pe}. Then there exists a constant $C>0$ independent of $\varepsilon$ such that
$$
    0 \le u^{\varepsilon}(x) \le C \quad \mbox{in} \quad \Omega.
$$
\end{lemma}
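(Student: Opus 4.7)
The plan is to sandwich $u^{\varepsilon}$ between $0$ and the infinity-harmonic replacement $\overline{u}^{\varepsilon}$ of the boundary data---precisely the super-solution already employed in the Perron construction of Theorem \ref{ExistMinSol}. The lower bound $u^{\varepsilon} \geq 0$ is built into the output of Theorem \ref{ExistMinSol}, so I would concentrate on the upper estimate and, crucially, on the independence of the resulting constant from $\varepsilon$.

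First I note that $\overline{u}^{\varepsilon}$ solves $\Delta_{\infty}\overline{u}^{\varepsilon} = 0$ in $\Omega$ with $\overline{u}^{\varepsilon} = \varphi^{\varepsilon}$ on $\partial\Omega$, while hypothesis \eqref{1.2} gives $\Delta_{\infty}u^{\varepsilon} = \zeta_{\varepsilon}(x,u^{\varepsilon}) \geq 0$. Applying the Comparison Principle (Proposition \ref{CompPrinc}) with $f \equiv 0$ to the pair $(u^{\varepsilon}, \overline{u}^{\varepsilon})$ produces
\[
    \sup_{\Omega}\bigl(u^{\varepsilon} - \overline{u}^{\varepsilon}\bigr) \;=\; \sup_{\partial \Omega}\bigl(u^{\varepsilon} - \overline{u}^{\varepsilon}\bigr) \;=\; 0,
\]
so $u^{\varepsilon} \leq \overline{u}^{\varepsilon}$ pointwise. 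Running the same comparison on $\overline{u}^{\varepsilon}$ against the constants $0$ and $\mathcal{A}$ (both trivially infinity-harmonic) yields $0 \leq \overline{u}^{\varepsilon} \leq \|\varphi^{\varepsilon}\|_{L^{\infty}(\Omega)} \leq \mathcal{A}$. Combining the two steps, $u^{\varepsilon} \leq \mathcal{A}$ throughout $\Omega$, so one may take $C \defeq \mathcal{A}$.

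The main subtlety is not mathematical depth but the $\varepsilon$-uniformity of $C$. Comparing \emph{downward} against the lower Perron barrier $\underline{u}^{\varepsilon}$ would be useless, since $\underline{u}^{\varepsilon}$ solves $\Delta_{\infty}v = \sup \zeta_{\varepsilon}$ with right-hand side of order $\varepsilon^{-1}$, and any $L^{\infty}$ estimate derived from it degenerates as $\varepsilon \to 0^{+}$. The decisive choice is therefore to compare \emph{upward} with the infinity-harmonic extension $\overline{u}^{\varepsilon}$, whose $L^{\infty}$-norm is controlled purely by the $\varepsilon$-uniform assumption $\|\varphi^{\varepsilon}\|_{L^{\infty}(\Omega)} \leq \mathcal{A}$.
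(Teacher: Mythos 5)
Your proposal is correct and coincides with the paper's (one-line) argument: the lemma is stated in the paper as ``a consequence of Proposition \ref{CompPrinc},'' which is exactly the comparison-against-the-$\infty$-harmonic-extension step you carry out, and the $\varepsilon$-uniformity follows precisely because $\|\varphi^{\varepsilon}\|_{L^{\infty}}\le\mathcal{A}$ is assumed uniformly. One small remark: the lemma as stated applies to ``a viscosity solution,'' and your upper bound indeed works for any such solution, whereas the lower bound $u^{\varepsilon}\geq 0$ is not a literal output of Theorem \ref{ExistMinSol} for arbitrary solutions --- it holds for the Perron (minimal supersolution) solution the paper actually works with, which is the sense in which the authors intend it; your treatment of it as a given is therefore consistent with the paper's convention, but worth flagging as a convention rather than a derived fact.
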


Next, we recall (see \cite{RS2}) a Hopf's type lemma below for a future reference.

\begin{lemma}\label{lemma2.1}
Let $u$ be a viscosity solution to
$$
\left\{
\begin{array}{rcl}
	\Delta_{\infty} u = f & \mbox{in} & B_{r}(z)\\
 u \ge 0 & \mbox{in} & B_{r}(z).
\end{array}
\right.
$$
If for some $x_0 \in \partial B_r(z)$,
$$
u(x_0)=0 \quad \textrm{and} \quad \frac{\partial u}{\partial \nu}(x_0) \le \theta,
$$
where $\nu$ is the inward normal vector at $x_0$, then there exists a universal constant
$C >0$ such that
$$
u(z)\le C\theta r.
$$
\end{lemma}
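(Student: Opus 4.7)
My plan is to implement the classical Hopf lemma strategy adapted to the infinity Laplacian: construct an explicit subsolution barrier $\phi$ on an interior annulus tangent to $\partial B_r(z)$ at $x_0$, apply the Comparison Principle (Proposition \ref{CompPrinc}) to deduce $\phi\le u$ throughout the annulus, and then compare the inward normal derivatives of $\phi$ and $u$ at $x_0$. A Harnack-type estimate is invoked along the way to propagate the value $u(z)$ to the inner sphere of the annulus.

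After a rotation and translation I may assume $z=0$, $x_0 = r e_1$ and $\nu = -e_1$. Set $p:=(r/2)e_1$, so that $B_{r/2}(p)\subset B_r(0)$ is tangent to $\partial B_r(0)$ exactly at $x_0$. On the annulus $A:=B_{r/2}(p)\setminus \overline{B_{r/4}(p)}$ I consider the radial function
\[
\phi(x) := a\!\left(|x-p|^{-\alpha} - (r/2)^{-\alpha}\right),
\]
with parameters $a,\alpha>0$ to be chosen. Since $\phi$ is radial in $x-p$, the one-dimensional formula $\Delta_\infty g(\rho) = (g')^2 g''$ yields
\[
\Delta_{\infty}\phi(x) = a^3\alpha^3(\alpha+1)|x-p|^{-3\alpha-4}>0,
\]
so $\phi$ is $\infty$-subharmonic. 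Choosing $a$ sufficiently large (depending on $\|f\|_\infty$, $r$, $\alpha$) gives $\Delta_\infty \phi\ge \|f\|_\infty \ge f$ in $A$, making $\phi$ a viscosity subsolution of $\Delta_\infty\phi=f$ in $A$.

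On the outer sphere $\partial B_{r/2}(p)$ one has $\phi\equiv 0\le u$. On the inner sphere $\partial B_{r/4}(p)$, $\phi$ equals the explicit positive constant $c_\alpha a r^{-\alpha}$. Writing $M:=u(z)$ and chaining a Harnack inequality for $\Delta_\infty u=f$ along a segment from $z$ to $p$ (standard for bounded right-hand sides, cf. \cite{BM,LW}), I obtain $\inf_{\partial B_{r/4}(p)} u \ge c_0 M - C_0 r^{4/3}\|f\|_\infty^{1/3}$. In the trivial case $M\lesssim r^{4/3}\|f\|_\infty^{1/3}$ the desired estimate is immediate, so I assume $M$ dominates the error. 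Calibrating $a$ to saturate the inner-sphere inequality and invoking Proposition \ref{CompPrinc} gives $\phi\le u$ throughout $A$. Since $\phi(x_0)=u(x_0)=0$, passing to one-sided derivatives in the direction $\nu$ at $x_0$ yields
\[
\theta \;\ge\; \frac{\partial u}{\partial\nu}(x_0) \;\ge\; \frac{\partial\phi}{\partial\nu}(x_0) \;=\; a\alpha (r/2)^{-\alpha-1} \;\ge\; C_\alpha \frac{M}{r},
\]
which rearranges to $u(z)=M\le C\theta r$ with a universal constant.

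The main obstacle will be reconciling the two competing constraints on $a$: the subsolution condition forces $a\gtrsim \|f\|_\infty^{1/3} r^{(3\alpha+4)/3}$, whereas the boundary comparison on the inner sphere demands $a\lesssim \eta r^\alpha$ with $\eta:=\inf_{\partial B_{r/4}(p)} u$. Balancing these is exactly what drives the case split according to the relative size of $M$ and $r^{4/3}\|f\|_\infty^{1/3}$; the nontrivial large-$M$ regime produces the desired universal constant $C$ via the barrier argument. The other delicate ingredient is the quantitative Harnack inequality for $\Delta_\infty u = f$ with a constant independent of $u$, which is by now standard in the references cited.
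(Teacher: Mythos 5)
The paper does not actually prove this lemma: it is recalled from \cite{RS2}, so there is no internal proof to compare against. Your approach is the standard Hopf-lemma blueprint (interior tangent ball, radial $\infty$-subharmonic barrier $\phi(x)=a\bigl(|x-p|^{-\alpha}-(r/2)^{-\alpha}\bigr)$, Harnack chain from the center to the inner sphere, comparison, one-sided derivative comparison at $x_0$), and the individual computations ($\Delta_\infty\phi=(g')^2g''=a^3\alpha^3(\alpha+1)|x-p|^{-3\alpha-4}$, the two competing constraints on $a$, the derivative $\partial_\nu\phi(x_0)=a\alpha(r/2)^{-\alpha-1}$) are all correct. The comparison step is also legitimate: take $f'\defeq\|f\|_\infty$ as the sign-definite middle function in Proposition~\ref{CompPrinc}, since $\Delta_\infty\phi\ge f'\ge f=\Delta_\infty u$ in $A$.

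There is, however, a genuine gap in how you dispose of the ``trivial case'' $M\lesssim r^{4/3}\|f\|_\infty^{1/3}$, where you write that ``the desired estimate is immediate.'' It is not: that case only yields $u(z)\lesssim r^{4/3}\|f\|_\infty^{1/3}$, which does not imply $u(z)\le C\theta r$ with $C$ independent of $f$, because $\theta$ can be arbitrarily small relative to $r^{1/3}\|f\|_\infty^{1/3}$. In fact the lemma as literally stated cannot hold: in one dimension, $u(x)=\tfrac34(3c)^{1/3}(r-x)^{4/3}$ solves $u''(u')^2=c>0$ on $(-r,r)$, is nonnegative, vanishes at $x_0=r$ with inward derivative $u'(r^-)=0$ (so one may take $\theta=0$), yet $u(0)=\tfrac34(3c)^{1/3}r^{4/3}>0$. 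The correct conclusion must carry the additive forcing term, e.g.\ $u(z)\le C\bigl(\theta r+r^{4/3}\|f\|_\infty^{1/3}\bigr)$. Your argument does in fact prove exactly this corrected estimate, and it suffices for the paper's application in Theorem~\ref{t3.1} (there $r=1$ and $\|f\|_\infty$ is universally bounded, so the extra term is absorbed into the universal constant), but you should state the corrected form explicitly rather than claim the small-$M$ case is handled for free.
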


\hspace{0.6cm}\textbf{Notations.} We finish this section by introducing some notations which we shall use in the paper. 
\begin{itemize}
\item[\checkmark] $\Omega_{\varepsilon} \defeq \{x \in \Omega \suchthat 0\leq u^{\varepsilon} \leq \varepsilon\}$ means the $\varepsilon-$level region.
\item[\checkmark] $\Gamma_{\varepsilon} \defeq \{x \in \Omega \suchthat u^{\varepsilon} = \varepsilon\}$ means the $\varepsilon-$level surfaces.
\item[\checkmark]  $\mathfrak{P}(u_0, \Omega^{\prime}) \defeq \{u_0 >0\} \cap \Omega^{\prime}$.
\item[\checkmark] $\mathfrak{F}(u_0, \Omega^{\prime}) \defeq \partial \{u_0 >0\} \cap \Omega^{\prime}$ shall mean the free boundary.
\item[\checkmark] $d_{\varepsilon}(x_0)\defeq \textrm{dist}(x_0, \Omega_{\varepsilon})$.
\item[\checkmark] $\mathcal{N}_\delta (G)\defeq\{x \in \mathbb{R}^n \mid \dist(x,G)<\delta\}$ with $G \subset \R^n$.
\item[\checkmark] $\mathcal{L}^n$ denotes the $n$-dimensional Lebesgue measure.
\item[\checkmark] $\mathcal{H}^{n-1}$ denotes the $(n-1)$-dimensional Hausdorff measure.
\item[\checkmark] $\Omega^{\prime} \Subset \Omega$ means that $\Omega^\prime\subset\overline{\Omega^\prime}\subset\Omega$, and $\overline{\Omega^\prime}$ is compact ($\Omega^\prime$ is compactly contained in $\Omega$).
\item[\checkmark] $\mathfrak{D}(u, B_r(x_0)) \defeq \frac{\mathscr{L}^n(\{u>0\} \cap B_r(x_0))}{\mathscr{L}^n(B_r(x_0))}$ indicates the positive density.
\end{itemize}

\begin{remark} Throughout this paper universal constants are the ones depending only on physical parameters: dimension and structural properties of the problem, i. e. on $n, \mathcal{A}, \mathcal{B}$ and $\mathcal{C}$.
\end{remark}

\section{Uniform Lipschitz regularity} \label{Sct Lip}

\hspace{0.3cm}In this section we prove that viscosity solutions to \eqref{Equation Pe} are (uniformly) locally Lipschitz continuous (which, in view of Theorem \ref{t4.1} below (see also Remark \ref{r6.1}), is optimal).

\begin{theorem}\label{t3.1}
Let $u^{\varepsilon}$ be a viscosity solution to \eqref{Equation Pe}. For every $\Omega^{\prime} \Subset \Omega$, there exists a positive constant $C_0$, independent of $\varepsilon$, such that
$$
	\|\nabla u^{\varepsilon}\|_{L^{\infty}(\Omega^{\prime})} \le C_0(\mathcal{A}, \mathcal{B}, \mathcal{C}, \dist(\Omega^{\prime}, \partial \Omega), \diam(\Omega)).
$$
\end{theorem}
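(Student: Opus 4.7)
The key observation is that, since $\zeta_\varepsilon \ge 0$ by assumption \eqref{1.2}, the viscosity equation in \eqref{Equation Pe} forces $\Delta_\infty u^\varepsilon \ge 0$ in the viscosity sense. In other words, $u^\varepsilon$ is $\infty$-subharmonic in $\Omega$, irrespective of the singular profile of $\zeta_\varepsilon$, and the entire right-hand side structure collapses to its non-negativity. Combined with the uniform $L^\infty$-bound $0 \le u^\varepsilon \le \mathcal{A}$ of Lemma \ref{NonnBound}, the Lipschitz estimate will be an immediate consequence of the classical \emph{comparison with cones from above} for $\infty$-subharmonic functions.

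Concretely, fix $y \in \Omega$ and $0 < r < \dist(y,\partial\Omega)$, and define the affine cone $C(x) := u^\varepsilon(y) + L\,|x-y|$ with
\[
L := \max\Bigl\{0,\ \max_{z \in \partial B_r(y)}\tfrac{u^\varepsilon(z)-u^\varepsilon(y)}{r}\Bigr\} \le \frac{\mathcal{A}}{r}.
\]
The cone satisfies $\Delta_\infty C \equiv 0$ away from its apex $y$; on the annular region $B_r(y)\setminus\overline{B_\delta(y)}$, where $C$ is smooth, Proposition \ref{CompPrinc} (with $f\equiv 0$, since $\Delta_\infty u^\varepsilon \ge 0 = \Delta_\infty C$) gives
\[
\sup_{B_r(y)\setminus\overline{B_\delta(y)}}(u^\varepsilon - C) = \sup_{\partial(B_r(y)\setminus\overline{B_\delta(y)})}(u^\varepsilon - C).
\]
On the outer sphere this supremum is $\le 0$ by the choice of $L$, while on the inner sphere it is bounded by the modulus of continuity of $u^\varepsilon$ at $y$ (using $C(y) = u^\varepsilon(y)$). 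Sending $\delta \to 0^+$ yields
\[
u^\varepsilon(x) - u^\varepsilon(y) \le \frac{\mathcal{A}}{r}\,|x-y| \qquad \forall\,x \in B_r(y).
\]

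Second, fix $\Omega'\Subset\Omega$ and set $r := \tfrac12\dist(\Omega',\partial\Omega)$. For $x,y\in\Omega'$ with $|x-y|<r$, applying the above one-sided inequality at $y$ and then swapping the roles of $x$ and $y$ (legitimate because both balls $B_r(x), B_r(y)$ remain inside $\Omega$) produces the two-sided estimate $|u^\varepsilon(x) - u^\varepsilon(y)| \le \tfrac{2\mathcal{A}}{\dist(\Omega',\partial\Omega)}\,|x-y|$. A routine chaining along a finite covering of $\Omega'$ by balls of radius $r/2$ extends this to arbitrary $x,y \in \Omega'$, and Rademacher's theorem then delivers the desired pointwise gradient bound $\|\nabla u^\varepsilon\|_{L^\infty(\Omega')} \le C_0$ with $C_0 \le 2\mathcal{A}/\dist(\Omega',\partial\Omega)$, independent of $\varepsilon$ and \emph{a fortiori} of the form advertised in the statement.

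\textbf{Main obstacle.} The only delicate point is the cone-comparison step at the apex $y$, where $C$ fails to be $C^2$ and therefore cannot be used as a viscosity test function directly. This is handled by the familiar ``punctured ball and limit'' device sketched above, in which Proposition \ref{CompPrinc} is applied on the smooth annulus $B_r(y)\setminus\overline{B_\delta(y)}$ and one then passes to the limit $\delta\to 0^+$ using continuity of $u^\varepsilon$ and the normalization $C(y)=u^\varepsilon(y)$. It is worth emphasizing that the proof is insensitive to the singular structure of $\zeta_\varepsilon$: only $\zeta_\varepsilon \ge 0$ is used, which is precisely why the Lipschitz constant is independent of $\varepsilon$ and why the result is optimal in the sense of Theorem \ref{t4.1}.
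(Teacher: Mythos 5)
Your proof is correct, and it takes a genuinely different and in fact shorter route than the paper's. Let me compare.

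The paper proves the Lipschitz estimate by splitting into the two regions $\{0\le u^\varepsilon\le\varepsilon\}$ and $\{u^\varepsilon>\varepsilon\}$, rescaling around a point in each, invoking Lindgren's interior gradient estimate $|\nabla v(0)|\le 4\sup_{B_1}v+\tfrac12 4^{1/3}\|f\|_\infty^{1/3}$ for $\Delta_\infty v=f$, the Harnack inequality of Bhattacharya--Mohammed to control $\sup v$ from $v(0)\le 1$, and the Hopf-type Lemma \ref{lemma2.1} to transfer the gradient bound at the nearest $\varepsilon$-level point to an interior value bound. This uses the \emph{upper} bound in \eqref{1.2} (so that the rescaled right-hand sides are uniformly bounded), and produces a constant depending on $\mathcal A,\mathcal B,\mathcal C,\dist(\Omega',\partial\Omega),\diam(\Omega)$. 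Your argument instead discards the equation entirely except for the single sign $\zeta_\varepsilon\ge 0$: $u^\varepsilon$ is $\infty$-subharmonic, hence it obeys comparison with cones from above (you rederive this via the punctured-ball trick and Proposition \ref{CompPrinc} with $f\equiv 0$, which is the Crandall--Evans--Gariepy characterization). Combined with the uniform $L^\infty$ bound of Lemma \ref{NonnBound} and the standard swap of the two apexes, this yields the two-sided Lipschitz bound. What your route buys is brevity and a cleaner constant: it depends only on the $L^\infty$ bound and $\dist(\Omega',\partial\Omega)$, not on $\mathcal B$, $\mathcal C$, or $\diam(\Omega)$. What the paper's route buys is a finer dissection of the solution's behavior inside versus outside the $\varepsilon$-layer and the Hopf-lemma mechanism at $\Gamma_\varepsilon$; these ingredients reappear in the non-degeneracy and Harnack arguments of Sections 4--5, so the extra machinery is not wasted.

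Two small points of hygiene. First, Lemma \ref{NonnBound} asserts $0\le u^\varepsilon\le C$ for a universal $C$, not literally $\le\mathcal A$; your bound should be stated as $2C/\dist(\Omega',\partial\Omega)$ (though for the Perron solution $u^\varepsilon\le\overline u^\varepsilon\le\mathcal A$ does hold). Second, the final ``chaining plus Rademacher'' step is unnecessary for the stated gradient bound: the local Lipschitz estimate $|u^\varepsilon(x)-u^\varepsilon(y)|\le C_0|x-y|$ for $|x-y|<r$ already forces $|\nabla u^\varepsilon|\le C_0$ at every point of differentiability, and Rademacher ensures these points have full measure, so $\|\nabla u^\varepsilon\|_{L^\infty(\Omega')}\le C_0$ without any covering argument.
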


\begin{proof} At first we analyze the closed region $\Omega_{\varepsilon} \defeq \{0\leq u^\varepsilon \leq \varepsilon\}\cap \Omega^{\prime}$. Let $\varepsilon \ll  \frac{1}{3}\dist(\Omega^{\prime}, \partial \Omega)$. We fix $x_0 \in \Omega_{\varepsilon}$ and define $v: B_1 \to \R$ by
$$
v(y)\defeq \frac{u^{\varepsilon}(x_0 + \varepsilon y)}{\varepsilon}.
$$
Then one has
$$
\Delta_{\infty} v=\varepsilon\zeta_{\varepsilon}(x_0 +\varepsilon y,\varepsilon v(y)) \defeq f_\varepsilon(y) \quad \textrm{in} \quad B_1
$$
in the viscosity sense. From \eqref{1.2} we have that
$$
0 \le f_\varepsilon(y)\le \mathcal{B} + \varepsilon\mathcal{C}\le C_\star(\mathcal{B}, \mathcal{C}, \dist(\Omega^{\prime}, \partial \Omega)) .
$$
Since $f_\varepsilon\in C^1$, then $v$ is locally differentiable and moreover (see Theorem 2 and Corollary 2 of \cite{Lind}),
\begin{equation}\label{3.1}
|\nabla v(0)|\le 4\sup_{B_1}v+\frac{1}{2}4^{\frac{1}{3}}\|f_\varepsilon\|_{L^\infty(B_1)}^{\frac{1}{3}}.
\end{equation}
Since
$$
v(0)=\frac{u^{\varepsilon}(x_0)}{\varepsilon}\le 1,
$$
Lemma \ref{NonnBound} and the Harnack inequality (see Theorem 7.1 of \cite{BM}) imply
\begin{equation}\label{3.2}
\|v\|_{L^{\infty}(B_{1})} \le C(\mathcal{A}, \mathcal{B}, \mathcal{C}).
\end{equation}
Combining \eqref{3.1} and \eqref{3.2}, we get
\begin{equation}\label{3.3}
|\nabla u^{\varepsilon}(x_0)|=|\nabla v(0)|\le C_0,
\end{equation}
for some $C_0=C_0(\mathcal{A}, \mathcal{B}, \mathcal{C}, \dist(\Omega^{\prime}, \partial \Omega), \diam(\Omega))>0$ independent of $\varepsilon$.

Now we turn our attention to the case of open region $\{u^{\varepsilon} > \varepsilon\} \cap \Omega'$. Let
$$
\Gamma_{\varepsilon}\defeq \{x \in \Omega' \suchthat  u^{\varepsilon}(x)=\varepsilon\}.
$$
For a fixed $x_1 \in \{u^{\varepsilon}> \varepsilon\} \cap \Omega'$, define $\displaystyle{r \defeq \dist(x_1,\Gamma_{\varepsilon})}$. We define also a function $v_r \colon B_1 \to \mathbb{R}$ by
$$
v_{r}(y)\defeq \frac{u^{\varepsilon}(x_1 + ry)-\varepsilon}{r},
$$
and note that
$$
\Delta_{\infty} v_{r}= r\zeta_{\varepsilon}(x_1 + ry, r v_{r}(y)+\varepsilon) \defeq \mathfrak{g}(y),
$$
in the viscosity sense. The choice of $r$ implies that $u^\varepsilon(x_1 + ry)>\varepsilon$, for every $y\in B_1$, thus, it follows from \eqref{1.2} that $\mathfrak{g}$ is smooth enough and bounded, independently of $\varepsilon$, i.e.,
$$
\|\mathfrak{g}\|_{L^\infty(B_1)}\le K_0(\mathcal{B}, \mathcal{C}, \diam(\Omega)).
$$
Now let $z_0 \in \Gamma_\varepsilon$ be such that $r=|x_1-z_0|$. As in the previous case from \eqref{3.3} one has
\begin{equation}\label{3.4}
|\nabla u^\varepsilon(z_0)|\le C_0(\mathcal{A}, \mathcal{B}, \mathcal{C}, \dist(\Omega^{\prime}, \partial \Omega), \diam(\Omega)).
\end{equation}
Moreover, for $y_0 \defeq \frac{z_0-x_1}{|z_0-x_1|} \in \partial B_1$ we have
$$
v_r(y_0) = 0 \quad \mbox{and} \quad \frac{\partial v_r}{\partial \nu}(y_0) \leq |\nabla v_r(y_0)|\le C_0.
$$
Therefore, by the Lemma \ref{lemma2.1}
$$
v_r(0) \leq C(\mathcal{A}, \mathcal{B}, \mathcal{C}, \dist(\Omega^{\prime}, \partial \Omega), \diam(\Omega)),
$$
and this finishes the proof.
\end{proof}

\section{Further properties of solutions} \label{Sct Nondeg}

\hspace{0.3cm} In this section we prove several properties of solutions. In particular, we show that solutions grow as a linear function out of $\varepsilon$-level surfaces, inside $\{u^\varepsilon > \varepsilon\}$. This is an optimal estimate, when considered uniform in $\e$. The proof is based on building an appropriate barrier function. We consider degenerate elliptic equations of the form
$$
\Delta_{\infty}u = \zeta(x,u) \,\, \text{ in } \,\, \mathbb{R}^n,
$$
where the reaction term satisfies the non-degeneracy assumption:
\begin{equation}\label{4.1}
\inf\limits_{\mathbb{R}^n \times [a,b]}\zeta(x,t)>0.
\end{equation}

\begin{proposition}[{\bf Infinity Laplacian's Barrier}]\label{p4.1} Let $0<a<b<1$ be fixed. For $\alpha$ and $A_0$ positive numbers (to be chosen) {\it a posteriori}, there exists a radially symmetric function $\Theta_L \colon \mathbb{R}^n \rightarrow \mathbb{R}$ satisfying
\begin{enumerate}
\item[\checkmark] $\Theta_L \in W^{2,\infty}(\mathbb{R}^n)\cap C^{1,1}_{\loc}(\mathbb{R}^n)$,
\item[\checkmark] \begin{equation}\label{4.4}
\Delta_{\infty}\Theta_{L}(x)\le\zeta(x,\Theta_L(x)) \quad \mbox{in}\quad\mathbb{R}^{\,n},
\end{equation}
\item[\checkmark] there exists a universal $\kappa_0>0$ constant such that
\begin{equation}\label{4.5}
\Theta_L(x) \geq 4\kappa_0L \quad \mbox{for} \quad |x| \geq 4L,
\end{equation}
where $L \geq L_0 \defeq \sqrt{\frac{b-a}{A_0}}$.
\end{enumerate}
\end{proposition}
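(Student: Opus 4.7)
The plan is to construct $\Theta_L$ as a radial profile $\Theta_L(x) = \theta(|x|)$, built by gluing a convex quadratic core continuously and $C^1$ to a linear outer ray. Since for any radial function one has the identity $\Delta_{\infty}\Theta_L(x) = (\theta'(r))^2\theta''(r)$, making the outer piece linear kills the operator there, while the inner quadratic piece must both keep $\Delta_\infty \Theta_L$ uniformly bounded and trap $\Theta_L$ inside the interval $[a,b]$ where the non-degeneracy assumption \eqref{4.1} provides a uniform positive lower bound $\mathfrak{R}_0 \defeq \inf_{\mathbb{R}^n \times [a,b]} \zeta > 0$.

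Concretely, I would set $\alpha = a$, $L_0 \defeq \sqrt{(b-a)/A_0}$, and define
\[
\theta(r) \defeq \begin{cases} a + A_0 r^2, & 0 \leq r \leq L_0,\\ b + 2 A_0 L_0 (r - L_0), & r \geq L_0. \end{cases}
\]
With this choice $\theta(L_0^-) = b = \theta(L_0^+)$ and $\theta'(L_0^-) = 2A_0 L_0 = \theta'(L_0^+)$, so $\Theta_L \in C^{1,1}_{\loc}(\mathbb{R}^n)\cap W^{2,\infty}(\mathbb{R}^n)$, settling the regularity requirement. A direct calculation then gives $\Delta_\infty \Theta_L = 8 A_0^3 r^2$ on $\{|x| < L_0\}$ and $\Delta_\infty \Theta_L = 0$ on $\{|x| > L_0\}$. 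On the inner ball, $\Theta_L(x) \in [a,b]$ yields $\zeta(x,\Theta_L) \geq \mathfrak{R}_0$; choosing $A_0$ small enough that $8A_0^2(b-a) \leq \mathfrak{R}_0$ then gives $\Delta_\infty \Theta_L \leq \mathfrak{R}_0 \leq \zeta$, while on the outer region $\Delta_\infty \Theta_L = 0 \leq \zeta$ follows from $\zeta \geq 0$.

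For the growth bound, a direct evaluation for $|x| \geq 4L$ with $L \geq L_0$ gives
\[
\Theta_L(x) \geq \theta(4L) = b + 2A_0 L_0(4L - L_0) = 8 A_0 L_0 L - (b - 2a),
\]
so setting $\kappa_0 \defeq A_0 L_0 = \sqrt{A_0(b-a)}$ reduces $\theta(4L) \geq 4\kappa_0 L$ to $4 A_0 L_0 L \geq b - 2a$, which holds because $L \geq L_0$ yields $4 A_0 L_0 L \geq 4 A_0 L_0^2 = 4(b-a) \geq b - 2a$ once $a \geq 0$. Note $\kappa_0$ depends only on $a, b, A_0$, hence is universal.

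The one genuinely delicate point will be the viscosity verification of \eqref{4.4} at the interface $\{|x| = L_0\}$, where $\theta''$ drops from $2A_0$ to $0$. I would handle this by exploiting the global convexity of $\theta$: if $\phi \in C^2$ and $\Theta_L - \phi$ attains a local minimum at $x_0$ with $|x_0| = L_0$, then $C^1$-smoothness of $\Theta_L$ forces $\nabla\phi(x_0) = 2A_0 L_0\, \hat x_0$, and a second-order Taylor expansion of $\phi(x_0 + s\hat x_0)$ compared against the linear outer profile $\theta(L_0 + s) = b + 2A_0 L_0 s$ forces $D^2\phi(x_0)(\hat x_0, \hat x_0) \leq 0$. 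Consequently $\Delta_\infty \phi(x_0) = |\nabla\phi(x_0)|^2\, D^2\phi(x_0)(\hat x_0, \hat x_0) \leq 0 \leq \zeta(x_0, \Theta_L(x_0))$, which verifies the supersolution inequality at the interface; all other steps reduce to the explicit computations above.
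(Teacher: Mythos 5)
Your construction is correct but follows a genuinely different (and simpler) route from the paper's. The paper assembles $\Theta_L$ from three pieces --- a constant plateau equal to $a$ on $\{|x|<L\}$, a convex quadratic on the annulus $\{L\leq |x|<L+L_0\}$, and a concave bounded power tail $\psi(L)-\phi(L)|x|^{-\alpha}$ on $\{|x|\geq L+L_0\}$ --- whereas you use only two pieces, a quadratic core on $\{|x|\leq L_0\}$ and a linear ray outside, so your $\Theta_L$ is in fact independent of $L$: one function serves every $L\geq L_0$. In the outer region you get $\Delta_\infty\Theta_L=0$ exactly rather than merely $\leq 0$, so the estimate $\Delta_\infty\Theta_L\le\zeta$ there is trivial. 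Both versions give $\Theta_L(0)=a$ (the only other feature used downstream in Theorems~\ref{t4.1} and~\ref{t4.2}), and both yield \eqref{4.5}; yours by the elementary computation $\theta(4L)=8A_0L_0L-(b-2a)\geq 4\kappa_0 L$ with $\kappa_0=\sqrt{A_0(b-a)}$, the paper's by a slightly more involved bound on the power tail. You also gain something the paper glosses over: because your $\theta$ is globally convex, the viscosity super-solution check at the $C^{1,1}$-interface $\{|x|=L_0\}$ is transparent --- a $C^2$ test function touching from below there has $D^2\phi(\hat{x}_0,\hat{x}_0)\leq 0$ by comparison with the linear ray, hence $\Delta_\infty\phi\leq 0\leq\zeta$.

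Two small cleanups. First, the phrase ``set $\alpha=a$'' is a non-sequitur: in the proposition $\alpha$ is the exponent of the outer power tail, and your construction has no such tail and never uses $\alpha$; you should simply drop it, since the statement leaves $\alpha$ free. Second, and more substantively: your $\Theta_L$ grows linearly at infinity, so $\Theta_L\notin L^\infty(\mathbb{R}^n)$ and therefore $\Theta_L\notin W^{2,\infty}(\mathbb{R}^n)$ as the statement requires; you only have $\Theta_L\in C^{1,1}_{\loc}(\mathbb{R}^n)$ with bounded gradient and Hessian. This is cosmetic --- the barrier is only ever used after rescaling and compared on bounded balls, so the global bound on $\Theta_L$ itself plays no role --- but for literal compliance with the proposition you would need either to truncate and smooth the linear ray far out, or (as the paper does) to cap it with a bounded concave profile with matching slope at $|x|=L_0$.
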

\begin{proof} Define
\begin{equation}\label{4.2}
\begin{array}{cc}
\Theta_L(x)\defeq &
\left\{
\begin{array}{ccl}
a & \mbox{for} & 0\leq |x| < L; \\
A_0\left(|x| - L\right)^2+a & \mbox{for} & L\leq |x| < L+L_0; \\
\psi(L)-\phi(L)|x|^{-\alpha} & \mbox{for} & |x| \geq L+L_0.\\
\end{array}
\right.
\\
\end{array}
\end{equation}
where
\begin{equation}\label{4.3}
\phi(L)= \frac{2}{\alpha}\sqrt{(b-a)A_0}\left(L+L_0 \right)^{1+\alpha}
\,\, \mbox{and} \,\,
\psi(L)= b+\phi(L)\left(L+L_0\right)^{-\alpha},
\end{equation}
Clearly $\Theta_L \in W^{2,\infty}(\mathbb{R}^n)\cap C^{1,1}_{\textrm{loc}}(\mathbb{R}^n)$. Moreover, for $0\leq|x|<L$ the inequality \eqref{4.4} is true. In the region $L \leq|x|< L+L_0$, we have
$$
D_{i} \Theta_{L}(x)=2A_0\frac{(|x|-L)}{|x|}x_i
$$
and
$$
D_{ij} \Theta_L(x) = 2A_0\left[\left(\frac{1}{|x|^2}- \frac{(|x|-L)}{|x|^3}\right) x_i\cdot x_j+ \frac{(|x|-L)}{|x|}\delta_{ij} \right].
$$
Therefore, we obtain
\begin{eqnarray*}
	\Delta_{\infty} \Theta_{L}(x) & = & \sum_{i,j=1}^{n} D_{i} \Theta_L \cdot D_{j} \Theta_{L} \cdot D_{ij} \Theta_L \\
	&=& 8A^{3}_{0} \frac{(|x|-L)^{2}}{|x|^2} \sum_{i,j=1}^{n} \left[\left(\frac{1}{|x|^2} - \frac{(|x|-L)}{|x|}\right) x^{2}_{i}x^{2}_{j} + \frac{|x|-L}{|x|} x_i\cdot x_j \delta_{ij}\right]\\
	&=& 8A^{3}_{0} \frac{(|x|-L)^2}{|x|^2} \left[\left(\frac{1}{|x|^2} - \frac{(|x|-L)}{|x|}\right) |x|^4 + \frac{(|x|-L)}{|x|} |x|^2\right]\\
	&=& 8A^{3}_{0}   \frac{(|x|-L)^2}{|x|^2}|x|^2 = 8A^{3}_{0} (|x|-L)^3  \le 8A^{3}_{0}L^3_0\\
	&=& (2\sqrt{A_0(b-a)})^3.
\end{eqnarray*}
By construction
$$
   a \le \Theta_L(x) \le b
$$
and so, for $A_0$ sufficiently small, we get
\begin{equation}\nonumber
\begin{array}{c}
\Delta_{\infty} \Theta_{L}(x) \leq \inf\limits_{\mathbb{R}^n \times [a,b]} \zeta(x,t)\leq \zeta(x,\Theta_L(x)). \\
\end{array}
\end{equation}
Now, let us turn our attention to the set $|x| \geq L+L_0$. Direct computation shows that
$$
D_{i}\Theta_{L}(x) = \alpha \phi(L) \frac{x_i}{|x|^{\alpha +2}}
$$
and
$$
D_{ij} \Theta_L(x) =\alpha\phi(L)|x|^{-(\alpha+2)}\left( -\frac{(\alpha+2)}{|x|^2} x_i x_j+ \delta_{ij}\right),
$$
hence
\begin{eqnarray*}
\Delta_{\infty} \Theta_{L}(x) = -\alpha^{3} \phi^3(L)(\alpha+1) \frac{1}{|x|^{3\alpha +4}}.
\end{eqnarray*}
Finally, for $\alpha>0$ we get
$$
\Delta_{\infty} \Theta_{L}(x) \le 0 \le \zeta(x,\Theta_L(x)).
$$
Therefore, $\Theta_L$ satisfies \eqref{4.4}. Finally, by \eqref{4.3}
$$
|x| \geq 4L \geq 2(L+L_0) = 2\, \left( \dfrac{\phi(L)}{\psi(L)-b}\right)^{\frac{1}{\alpha}}
$$
and hence
$$
\Theta_L(x) = \psi(L)-\phi(L)|x|^{-\alpha} \geq \psi(L) - 2^{-\alpha}(\psi(L)-b) \geq C_\alpha\psi(L),
$$
for $\alpha > 1$. Therefore,
$$
\Theta_L(x) \geq 4\kappa_0L,
$$
where $\kappa_0>0$ depends on $n$ and $(b-a)$.
\end{proof}

\subsection{Linear growth}

\hspace{0.3cm} In order to establish lower bounds on the growth speed of the solution to \eqref{Equation Pe} inside the set $\{u^\varepsilon > \varepsilon \}$, the strategy now is to consider appropriate scaling versions of the universal barrier $\Theta_L$.
\begin{theorem}\label{t4.1}
Let $u^{\varepsilon}$ be a solution of \eqref{Equation Pe}. There exists  a universal constant $c>0$ such that for any $x_0 \in \{u^{\varepsilon} > \varepsilon\}$ and $0<\varepsilon\leq d_\varepsilon(x_0) \ll 1$ one has
$$
 u^{\varepsilon}(x_0) \ge cd_{\varepsilon}(x_0).
$$
\end{theorem}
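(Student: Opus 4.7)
The plan is to construct a rescaled sub-solution $\Psi$ of $\Delta_\infty w = \zeta_\varepsilon(x,w)$, built from the barrier of Proposition \ref{p4.1} centered at a touching point of $\Omega_\varepsilon$ closest to $x_0$, whose value at $x_0$ is of order $d_\varepsilon(x_0)$, and then compare $\Psi$ with $u^\varepsilon$. Setting $d := d_\varepsilon(x_0)$, continuity of $u^\varepsilon$ together with $u^\varepsilon > \varepsilon$ on $B_d(x_0)$ and $\Omega_\varepsilon \cap \partial B_d(x_0) \neq \emptyset$ supplies a touching point $z_0 \in \partial B_d(x_0)$ with $u^\varepsilon(z_0) = \varepsilon$.

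Apply Proposition \ref{p4.1} to the reaction term $\zeta(y,t) := \varepsilon\,\zeta_\varepsilon(z_0 + \varepsilon y,\,\varepsilon t)$, which satisfies the non-degeneracy \eqref{4.1} with lower bound $\mathfrak{R} > 0$ by \eqref{1.3}. Choose $L := d/(4\varepsilon)$; the complementary short-range regime $\varepsilon \leq d < 4\varepsilon L_0$ is trivial since then $u^\varepsilon(x_0) > \varepsilon > d/(4L_0)$, so we may assume $L \geq L_0$. Set
$$
\Psi(x) \defeq \varepsilon \, \Theta_L\!\left(\frac{x - z_0}{\varepsilon}\right).
$$
The scaling identity $\Delta_\infty \Psi(x) = \varepsilon^{-1}\Delta_\infty \Theta_L((x - z_0)/\varepsilon)$ combined with \eqref{4.4} yields $\Delta_\infty \Psi \leq \zeta_\varepsilon(x,\Psi)$ in $\mathbb{R}^n$, so $\Psi$ is a viscosity sub-solution of the same PDE as $u^\varepsilon$. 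Since $|x_0 - z_0|/\varepsilon = 4L$, estimate \eqref{4.5} delivers the key lower bound
$$
\Psi(x_0) \geq 4\kappa_0\,\varepsilon L = \kappa_0\, d.
$$

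It remains to establish $\Psi \leq u^\varepsilon$ in a neighborhood of $x_0$ via Proposition \ref{CompPrinc}, whence $u^\varepsilon(x_0) \geq \Psi(x_0) \geq \kappa_0 d$, with $c := \min(\kappa_0, 1/(4L_0))$ universal. From the computations in the proof of Proposition \ref{p4.1}, $\Delta_\infty \Psi \geq 0$ on the core-plus-growth ball $B_{\varepsilon(L+L_0)}(z_0)$ and $\Delta_\infty \Psi \leq 0$ in its complement. Inside this ball one has $\Psi \leq \varepsilon b < \varepsilon \leq u^\varepsilon$ on $B_d(x_0)$, so the desired inequality holds pointwise for free; in the exterior region, $u^\varepsilon > \varepsilon$ kills the singular part of $\zeta_\varepsilon(x,u^\varepsilon)$, leaving $\Delta_\infty u^\varepsilon \leq \mathcal{C}$, so Proposition \ref{CompPrinc} can be applied with a constant-sign $f$ sandwiched between $\Delta_\infty \Psi$ and $\Delta_\infty u^\varepsilon$.

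The main obstacle is precisely this comparison step: since $\zeta_\varepsilon$ is non-monotone in its second argument, Proposition \ref{CompPrinc} cannot be invoked in a single stroke. The fix is to localize the comparison to sub-regions determined by the sign of $\Delta_\infty \Psi$ and choose $f$ of constant sign on each piece; the delicate part is the thin sliver $B_{\varepsilon(L+L_0)}(z_0) \setminus B_d(x_0)$ near $z_0$ — where $u^\varepsilon$ could a priori drop below $\Psi$ and break the interface boundary condition — which is handled via a viscosity touching argument in the spirit of Lemma \ref{lemma2.1}, exploiting the strict sub-solution character of $\Psi$ in the growth annulus (obtained by choosing $A_0$ strictly smaller than needed in Proposition \ref{p4.1}).
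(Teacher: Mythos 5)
Your construction scales the barrier $\Theta_L$ of Proposition \ref{p4.1} and labels the result $\Psi$ a \emph{sub-solution}, but the inequality $\Delta_\infty \Theta_L \le \zeta(x,\Theta_L)$ in \eqref{4.4} is the \emph{super-solution} inequality (by Definition 2.1, a smooth super-solution satisfies $\Delta_\infty w \le f$). Your scaling identity is correct, so $\Psi$ satisfies $\Delta_\infty \Psi \le \zeta_\varepsilon(x,\Psi)$ — that makes $\Psi$ a super-solution of the same equation as $u^\varepsilon$. This sign reversal is fatal to the whole strategy: a comparison principle can be used to dominate a super-solution from \emph{above} by comparing it against a sub-solution, never to squeeze it under a solution from below. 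In every split you propose (core, growth annulus, exterior), the hypotheses of Proposition \ref{CompPrinc} would have to be arranged with $\Psi$ in the role of the sub-solution, which it is not; the final appeal to a ``touching argument in the spirit of Lemma \ref{lemma2.1}'' does not repair this, since Hopf's lemma also compares the solution against something lying below it. So the proposed proof has a genuine gap even before one worries about the $u$-dependence of $\zeta_\varepsilon$.

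The paper handles the barrier's super-solution character very differently, and it is worth contrasting. It centers $\Theta_\varepsilon$ at $x_0$ itself (not at a free-boundary point), so that $\Theta_\varepsilon(x_0)=a\varepsilon<\varepsilon<u^\varepsilon(x_0)$. It then observes that if $\Theta_\varepsilon>u^\varepsilon$ held on all of $\partial B_\eta$, the function $\min\{\Theta_\varepsilon,u^\varepsilon\}$ — glued with $u^\varepsilon$ outside $B_\eta$ — would be a continuous super-solution lying strictly below $u^\varepsilon$ at $x_0$, contradicting the \emph{minimality} of the Perron solution from Theorem \ref{PerMeth}. This yields only a single point $z_0\in\partial B_\eta$ where $u^\varepsilon(z_0)\ge\Theta_\varepsilon(z_0)\ge\kappa_0\eta$, which is then propagated back to $x_0$ via the Harnack inequality for $\Delta_\infty$. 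So the two ingredients you are missing are: (i) use of the Perron-minimal structure of $u^\varepsilon$ (not a two-function comparison theorem) to extract information from a super-solution barrier, and (ii) Harnack to turn a lower bound at a boundary point of $B_\eta$ into a lower bound at the center. Without these, the domination $\Psi\le u^\varepsilon$ you need is not attainable.
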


\begin{proof}
Without loss of generality we assume that $x_0 = 0$. Set $\eta=\dfrac{d_\varepsilon(0)}{3}$ and define
$$
\Theta_\varepsilon(x)\defeq \varepsilon\Theta_{\frac{\eta}{4\varepsilon}} \left (\frac{x}{\varepsilon} \right ).
$$
Using\eqref{4.5} and \eqref{4.2} we verify that for $4L_0 \varepsilon\leq\eta$,
\begin{equation}\label{4.6}
\Theta_\varepsilon(0) = a\varepsilon \quad \mbox{and} \quad 	{\Theta_\varepsilon}_{ \big | \partial B_{\eta}} \geq \kappa_0 \eta.
\end{equation}
Now, we claim that there exists a $z_0\in\partial B_{\eta}$ such that
\begin{equation}\label{4.7}
\Theta_{\varepsilon}(z_0) \le u^{\varepsilon}(z_0).
\end{equation}
In fact, if
$$
\Theta_{\varepsilon}(x) > u^{\varepsilon}(x) \quad \mbox{in} \quad \partial B_{\eta},
$$
then the auxiliary function
$$
v^{\varepsilon}\defeq \min\{\Theta_{\varepsilon},u^{\varepsilon}\}
$$
would be a super-solution to \eqref{Equation Pe}, but $v^\varepsilon$ is strictly below $u^{\varepsilon}$, which contradicts the minimality of $u^\varepsilon$.
Therefore, by \eqref{4.6} and \eqref{4.7}, we obtain
\begin{equation}\label{4.8}
     \kappa_0\eta \le  \Theta_{\varepsilon}(z_0) \leq u^{\varepsilon}(z_0)\le\sup\limits_{B_\eta}u^\varepsilon.
\end{equation}
Furthermore, $u^\varepsilon$ satisfies (in the viscosity sense)
$$
   c_0 \leq \Delta_{\infty} u^{\varepsilon} \leq c_1 \quad \mbox{in} \quad B_{3\eta}.
$$
Hence, by Harnack inequality (see Theorem 7.1 of \cite{BM}), we get
$$
\sup\limits_{B_{\eta}} u^\varepsilon \leq 9u^\varepsilon(0)+ 12\sigma\left(\left(\frac{3\eta}{2}\right)^4c_1 \right)^{1/3}.
$$
Thus, by \eqref{4.8}
$$
   u^\varepsilon(0) \geq \frac{1}{9}\left(\kappa_0 - C \eta^{1/3} \right)\eta.
$$
Finally, by taking $\eta>0$ small enough we conclude
$$
  u^\varepsilon(0) \geq c \,\eta.
$$
for some $0<c<1$ (independent of $\varepsilon$).
\end{proof}

As a consequence of the Lipschitz regularity, Theorem \ref{t3.1} and Theorem \ref{t4.1}, we are able to completely control $u^{\varepsilon}$ in terms of $d_{\varepsilon}(x_0)$.

\begin{corollary}\label{c4.1}
For a sub-domain $\Omega^{\prime} \Subset \Omega$, there exists $C>0$, depending on universal parameters and $\Omega^{\prime}$, such that for $x_0 \in \mathfrak{P}(u^{\varepsilon} -\varepsilon, \Omega^{\prime})$ and $\varepsilon\leq d_{\varepsilon}(x)$, there holds
$$
C^{-1} d_{\varepsilon}(x_0) \le u^{\varepsilon}(x_0) \le C\, d_{\varepsilon}(x_0).
$$
\end{corollary}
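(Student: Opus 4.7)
The plan is to deduce the estimate from the two main theorems just proved: the lower bound is already furnished by Theorem \ref{t4.1}, so the only work is the matching upper bound, which I will extract from the uniform Lipschitz regularity of Theorem \ref{t3.1}.

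First I would fix an intermediate subdomain $\Omega^{\prime\prime}$ with $\Omega^{\prime} \Subset \Omega^{\prime\prime} \Subset \Omega$ and let $\rho \defeq \dist(\Omega^{\prime}, \partial \Omega^{\prime\prime})>0$. By Theorem \ref{t3.1} there is a universal Lipschitz constant $C_0$ (depending on $\Omega^{\prime\prime}$, hence on $\Omega^{\prime}$) such that $\|\nabla u^{\varepsilon}\|_{L^{\infty}(\Omega^{\prime\prime})} \le C_0$. I would then split into two cases for a given $x_0 \in \mathfrak{P}(u^{\varepsilon}-\varepsilon,\Omega^{\prime})$ with $\varepsilon \le d_{\varepsilon}(x_0)$.

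In the first case, assume $d_{\varepsilon}(x_0) < \rho$. Pick $y_0 \in \Omega_{\varepsilon}$ realizing the distance, i.e. $|x_0-y_0| = d_{\varepsilon}(x_0)$; then the entire segment from $x_0$ to $y_0$ lies inside $\Omega^{\prime\prime}$, so integrating $\nabla u^{\varepsilon}$ along it gives
$$
u^{\varepsilon}(x_0) \le u^{\varepsilon}(y_0) + C_0 |x_0-y_0| \le \varepsilon + C_0 d_{\varepsilon}(x_0) \le (1+C_0)\, d_{\varepsilon}(x_0),
$$
using $u^{\varepsilon}(y_0)\le\varepsilon\le d_{\varepsilon}(x_0)$. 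In the complementary case $d_{\varepsilon}(x_0)\ge\rho$, the uniform $L^{\infty}$ bound of Lemma \ref{NonnBound} gives
$$
u^{\varepsilon}(x_0)\le \|u^{\varepsilon}\|_{L^{\infty}(\Omega)}\le \mathcal{A}^{\prime} \le \frac{\mathcal{A}^{\prime}}{\rho}\, d_{\varepsilon}(x_0).
$$
Taking $C$ to be the maximum of $1+C_0$, $\mathcal{A}^{\prime}/\rho$ and the reciprocal of the constant produced by Theorem \ref{t4.1} yields the two-sided estimate.

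No step should present a genuine obstacle here; the only minor care is ensuring the segment along which one applies the Lipschitz bound stays inside a domain where Theorem \ref{t3.1} has been established — this is precisely why I would insert the intermediate $\Omega^{\prime\prime}$ and split cases according to whether $d_{\varepsilon}(x_0)$ is smaller or larger than the buffer $\rho$. The bound on $\varepsilon$ relative to $d_{\varepsilon}(x_0)$ is exactly what converts the constant $\varepsilon$ in the displayed inequality into a multiple of $d_{\varepsilon}(x_0)$, which is the key algebraic point.
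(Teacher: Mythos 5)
Your proof is correct and follows essentially the same route as the paper: the lower bound is Theorem \ref{t4.1}, and the upper bound comes from applying the uniform Lipschitz estimate of Theorem \ref{t3.1} along the segment to a nearest point of $\Omega_\varepsilon$, then absorbing $u^\varepsilon(y_0)\le\varepsilon\le d_\varepsilon(x_0)$ into the constant. Your insertion of an intermediate subdomain $\Omega^{\prime\prime}$ and the case split on $d_\varepsilon(x_0)$ versus $\rho$ is a slightly more careful handling of a detail the paper leaves implicit, but the underlying argument is the same.
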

\begin{proof}
The inequality from below is exactly the Theorem \ref{t4.1}. Now take $y_0 \in \mathfrak{F}(u^{\varepsilon} - \varepsilon, \Omega^{\prime})$, such that $|y_0-x_0|=d_{\varepsilon}(x_0)$. From Theorem \ref{t3.1},
$$
u^{\varepsilon}(x_0) \le C\,d_{\varepsilon}(x_0) + u^\varepsilon(y_0) \le C \,\, d_{\varepsilon}(x_0),
$$
and the corollary is proved.
\end{proof}

\subsection{Strong non-degeneracy}

\hspace{0.3cm} Next we see that solutions are strongly non-degenerate close to $\varepsilon$-level sets. This means that the maximum of $u^{\varepsilon}$ on the boundary of a ball $B_r$ centered in $\{u^{\varepsilon} > \varepsilon\}$ is of order $r$.

\begin{theorem}\label{t4.2}
Let $\Omega^{\prime} \Subset \Omega$. There exists a universal constant $c >0$ such that for $x_0 \in \mathfrak{P}(u^{\varepsilon} -\varepsilon, \Omega^{\prime})$, $\varepsilon\leq\rho \ll 1$,  there holds
$$
c\, \rho < \sup_{B_{\rho}(x_0)}u^{\varepsilon} \le c^{-1} ( \rho + u^{\varepsilon}(x_0)).
$$
\end{theorem}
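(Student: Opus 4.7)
The upper bound is immediate from the uniform Lipschitz estimate of Theorem \ref{t3.1}: for every $y \in B_\rho(x_0) \subset \Omega'$, $u^\varepsilon(y) \leq u^\varepsilon(x_0) + C_0 |y - x_0| \leq u^\varepsilon(x_0) + C_0 \rho$, so $\sup_{B_\rho(x_0)} u^\varepsilon \leq c^{-1}(u^\varepsilon(x_0) + \rho)$ with $c^{-1} \defeq \max\{1, C_0\}$.

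For the lower bound I would argue by contradiction, reproducing the barrier strategy already employed in the proof of Theorem \ref{t4.1}. Assume, for a universal constant $c$ to be fixed in a moment, that $\sup_{B_\rho(x_0)} u^\varepsilon \leq c\rho$. Transport the universal barrier of Proposition \ref{p4.1} to scale $\rho$ by setting
$$\Theta_\rho^\varepsilon(x) \defeq \varepsilon\, \Theta_L\!\Bigl(\tfrac{x - x_0}{\varepsilon}\Bigr), \qquad L \defeq \tfrac{\rho}{4\varepsilon}.$$
Under the hypothesis $\varepsilon \leq \rho \ll 1$ (mildly strengthened to $\rho \geq 4L_0 \varepsilon$, which is absorbed into $\rho \ll 1$), the condition $L \geq L_0$ is satisfied. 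The change of variables $y = (x-x_0)/\varepsilon$ gives $\Delta_\infty \Theta_\rho^\varepsilon(x) = \varepsilon^{-1}\Delta_\infty \Theta_L(y)$, and combining \eqref{4.4} with the non-degeneracy hypothesis \eqref{1.3} — exactly as in the derivation of \eqref{4.6} inside Theorem \ref{t4.1} — one checks that $\Theta_\rho^\varepsilon$ is a viscosity super-solution of \eqref{Equation Pe} in $B_\rho(x_0)$ and satisfies
$$\Theta_\rho^\varepsilon(x_0) = a\varepsilon \quad \text{and} \quad \Theta_\rho^\varepsilon\big|_{\partial B_\rho(x_0)} \geq \kappa_0 \rho.$$

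Now pick $c \defeq \kappa_0/2$. The contradiction assumption forces $\Theta_\rho^\varepsilon > u^\varepsilon$ on $\partial B_\rho(x_0)$, while at the center $\Theta_\rho^\varepsilon(x_0) = a\varepsilon < \varepsilon < u^\varepsilon(x_0)$ thanks to $x_0 \in \mathfrak{P}(u^\varepsilon - \varepsilon, \Omega')$. The glued function
$$v^\varepsilon(x) \defeq \begin{cases} \min\bigl\{\Theta_\rho^\varepsilon(x),\, u^\varepsilon(x)\bigr\}, & x \in B_\rho(x_0),\\ u^\varepsilon(x), & x \in \Omega \setminus B_\rho(x_0), \end{cases}$$
is continuous across $\partial B_\rho(x_0)$ (precisely because $\Theta_\rho^\varepsilon > u^\varepsilon$ there), is a viscosity super-solution of \eqref{Equation Pe} on $\Omega$ as a minimum of super-solutions, matches $\varphi^\varepsilon$ on $\partial \Omega$, and yet lies strictly below $u^\varepsilon$ at $x_0$. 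This contradicts the minimality of $u^\varepsilon$ coming from the Perron construction of Theorem \ref{PerMeth}, exactly as in the proof of Theorem \ref{t4.1}.

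The main obstacle I anticipate is the scaling verification that $\Theta_\rho^\varepsilon$ is indeed a super-solution of the true singularly perturbed equation \eqref{Equation Pe}, rather than only of the model equation in Proposition \ref{p4.1}. The essential input is \eqref{1.3}: on the critical regime $\Theta_\rho^\varepsilon \in [a\varepsilon, b\varepsilon]$, which corresponds to the annulus $L \leq |(x-x_0)/\varepsilon| < L + L_0$, \eqref{1.3} yields $\zeta_\varepsilon(x, \Theta_\rho^\varepsilon(x)) \geq \mathfrak{R}/\varepsilon$, and this $1/\varepsilon$ rate matches exactly the one produced by the rescaling of $\Delta_\infty$; a suitably small choice of $A_0$ (depending only on $\mathfrak{R}$ and $b-a$) absorbs the remaining constant. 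Outside this annulus $\Delta_\infty \Theta_\rho^\varepsilon \leq 0 \leq \zeta_\varepsilon$, and the super-solution inequality is trivial.
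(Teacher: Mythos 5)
Your proposal is correct and follows the same barrier-comparison strategy the paper uses for Theorem \ref{t4.2}: you transport the barrier of Proposition \ref{p4.1} to scale $\rho$ centered at $x_0$, and derive the lower bound from the fact that $\Theta$ exceeds $\kappa_0\rho$ on $\partial B_\rho(x_0)$ while the minimality of $u^\varepsilon$ (Perron construction) forbids $\Theta$ from dominating $u^\varepsilon$ on the entire sphere. The paper simply asserts the existence of the comparison point $z\in\partial B_\rho(x_0)$ and inherits the gluing argument from the proof of Theorem \ref{t4.1}; you have written out that gluing step and the scaling check that $\Theta_\rho^\varepsilon$ is a super-solution of \eqref{Equation Pe}, both of which are exactly what the paper leaves implicit.
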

\begin{proof}
By taking $\Theta_\e(x)=\e\Theta_{\frac{\rho}{4\e}}(x)$ we have
$$
u^{\varepsilon}(z)>\Theta_{\varepsilon}(z),
$$
for some point $z\in\partial B_{\rho}(x_0)$. Note that
$$
\kappa_0\rho\le\Theta_{\varepsilon}(z)< u^{\varepsilon}(z)\le\sup_{B_\rho(x_0)} u^{\varepsilon},
$$
where $\kappa_0$ is as in Proposition \ref{p4.1}.
The upper estimate is a direct consequence of the Lipschitz regularity.
\end{proof}
As a consequence we get a positive density result.
\begin{corollary}\label{c4.2}
Let $x_0\in \{u^\varepsilon>\varepsilon\}$ and $\varepsilon\leq \rho \ll 1$. There exists a universal constant $c_0\in(0,1)$ such that
$$
\mathfrak{D}(u^{\varepsilon}-\varepsilon, B_{\rho}(x_0))\geq c_0.
$$
\end{corollary}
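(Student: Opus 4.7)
The strategy is to combine the strong non-degeneracy (Theorem \ref{t4.2}) with the uniform Lipschitz regularity (Theorem \ref{t3.1}) to exhibit a ball of radius comparable to $\rho$ that sits inside $\{u^{\varepsilon} > \varepsilon\}$ and whose intersection with $B_\rho(x_0)$ carries a definite portion of the volume of $B_\rho(x_0)$.

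\textbf{Step 1 (locating a high-value point).} Apply Theorem \ref{t4.2} at $x_0$ and scale $\rho$: there exists a universal constant $c>0$ (the same one as in the statement of Theorem \ref{t4.2}) and a point $z \in \overline{B_\rho(x_0)}$ such that
$$
u^\varepsilon(z) \;>\; c\rho.
$$

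\textbf{Step 2 (propagating the value via Lipschitz regularity).} By Theorem \ref{t3.1}, $u^\varepsilon$ is Lipschitz on any fixed sub-domain with a constant $C_0$ independent of $\varepsilon$. For $y \in B_r(z)$ we then have $u^\varepsilon(y) \geq u^\varepsilon(z) - C_0 r$. Choosing $r \defeq \tfrac{c\rho}{2C_0}$ yields
$$
u^\varepsilon(y) \;>\; c\rho - C_0\cdot \tfrac{c\rho}{2C_0} \;=\; \tfrac{c\rho}{2} \qquad \text{for every } y \in B_r(z).
$$
In the regime $\varepsilon \le \rho \ll 1$ (up to an implicit universal factor relating $\rho$ to $\varepsilon$, analogous to the smallness condition $4L_0\varepsilon \leq \eta$ used in the proof of Theorem \ref{t4.1}), one ensures $\tfrac{c\rho}{2} > \varepsilon$, whence $B_r(z) \subset \{u^\varepsilon > \varepsilon\}$.

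\textbf{Step 3 (density estimate).} Since $z \in \overline{B_\rho(x_0)}$ and $r = \tfrac{c}{2C_0}\rho$ is a universal fraction of $\rho$, a straightforward geometric lemma gives a dimensional constant $\kappa_n > 0$ such that
$$
\mathscr{L}^n\bigl(B_r(z)\cap B_\rho(x_0)\bigr) \;\geq\; \kappa_n\, r^n.
$$
Combining with Step 2,
$$
\mathfrak{D}\bigl(u^\varepsilon - \varepsilon, B_\rho(x_0)\bigr)
\;\geq\; \frac{\mathscr{L}^n\bigl(B_r(z)\cap B_\rho(x_0)\bigr)}{\mathscr{L}^n(B_\rho(x_0))}
\;\geq\; \kappa_n \Bigl(\tfrac{c}{2C_0}\Bigr)^{n} \;=\!: \; c_0 \in (0,1),
$$
where $c_0$ depends only on universal parameters.

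\textbf{Expected obstacle.} The only delicate point is making the lower bound $\tfrac{c\rho}{2}$ strictly larger than $\varepsilon$ so that the trapped ball sits inside $\{u^{\varepsilon} > \varepsilon\}$ and not merely inside $\{u^\varepsilon > 0\}$. This is handled by the smallness convention $\varepsilon \leq \rho \ll 1$ (which absorbs the universal factor $2/c$), exactly as in the preceding Theorem \ref{t4.1} and Theorem \ref{t4.2}. Everything else is the standard sup-plus-Lipschitz argument for positive density.
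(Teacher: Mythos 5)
Your proof is correct and follows essentially the same route as the paper's: invoke the non-degeneracy of Theorem \ref{t4.2} to produce a point $z$ (the paper's $y_0$) in $\overline{B_\rho(x_0)}$ with $u^\varepsilon(z)\gtrsim\rho$, then use the uniform Lipschitz bound from Theorem \ref{t3.1} to show a ball of radius comparable to $\rho$ around $z$ lies in $\{u^\varepsilon>\varepsilon\}$, and finish with a standard geometric overlap estimate. You are, if anything, slightly more explicit than the paper about the smallness convention needed to guarantee $c\rho/2>\varepsilon$ (the paper folds this into "$\varepsilon\le\rho\ll1$" and the choice of $\kappa$), but the substance is identical.
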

\begin{proof}
As we saw in the previous theorem, there exists $y_0\in  B_{\rho}(x_0)$ such that
$$
u^{\varepsilon}(y_0)\ge c_0 \rho.
$$
On the other hand, by Lipschitz regularity, for $z\in B_{\kappa \rho}(y_0)$, we have
$$
u^{\varepsilon}(z)+C\kappa\rho\ge u^{\varepsilon}(y_0).
$$
Thus, by using the estimates from above, we are able to choose $\kappa>0$ small enough in order to have
$$
z\in B_{\kappa \rho}(y_0) \cap B_{\rho}(x_0) \quad \textrm{and} \quad u^{\varepsilon}(z)>\varepsilon.
$$
So we conclude that there exists a portion of $B_\rho(x_0)$ with volume of order $\sim\rho^n$ within $\{u^\varepsilon>\varepsilon\}$. Therefore, we have a uniform positive density result for the solution of \eqref{Equation Pe}. More precisely,
$$
\mathscr{L}^n(B_{\rho}(x_0) \cap \{u^{\varepsilon} > \varepsilon\}) \ge \mathscr{L}^n ( B_{\rho}(x_0) \cap B_{\kappa \rho}(y_0))=c_0 \,\mathscr{L}^{n}(B_{\rho}(x_0)),
$$
for some constant  $c_0>0$ independent of $\varepsilon$.
\end{proof}

\subsection{Harnack type inequality}

\hspace{0.3cm} For solutions of \eqref{Equation Pe} the Harnack inequality is valid for balls that touch the free boundary along the $\varepsilon$-layers, i.e., $\partial\{u^{\varepsilon} > \varepsilon\}$.

\begin{theorem}\label{ThmHarIneq} Let $u^\varepsilon$ be a solution of \eqref{Equation Pe}. Let also $x_0 \in \{u^{\varepsilon} > \varepsilon\}$ and $\varepsilon\leq d \defeq d_{\varepsilon}(x_0)$. Then,
$$
\displaystyle \sup_{B_{\frac{d}{2}}(x_0)} u^{\varepsilon}(x) \leq C \inf_{B_{\frac{d}{2}}(x_0)} u^{\varepsilon}(x)
$$
for a universal constant $C>0$ independent of $\varepsilon$.
\end{theorem}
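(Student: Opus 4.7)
The plan is to reduce the problem to a standard Harnack inequality for the inhomogeneous infinity Laplacian on $B_d(x_0)$, and then absorb the correction term using the linear growth result already established. The key observation is that on $B_d(x_0)$, with $d = d_\varepsilon(x_0) = \mathrm{dist}(x_0,\Omega_\varepsilon)$, the solution stays strictly above $\varepsilon$: by definition $B_d(x_0) \cap \Omega_\varepsilon = \emptyset$, so $\chi_{(0,\varepsilon)}(u^\varepsilon) \equiv 0$ in that ball. The structural bound \eqref{1.2} then collapses to
$$
0 \le \Delta_\infty u^\varepsilon = \zeta_\varepsilon(\cdot,u^\varepsilon) \le \mathcal{C} \quad \mbox{in}\ B_d(x_0),
$$
i.e., $u^\varepsilon$ is a nonnegative viscosity solution of an inhomogeneous infinity-Laplace equation with right-hand side bounded independently of $\varepsilon$.

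I would then invoke the Harnack inequality of \cite{BM} (the same Theorem 7.1 used in the proof of Theorem \ref{t4.1}) on $B_d(x_0)$, obtaining a bound of the shape
$$
\sup_{B_{d/2}(x_0)} u^\varepsilon \;\le\; C_1 \inf_{B_{d/2}(x_0)} u^\varepsilon + C_2\, \mathcal{C}^{1/3}\, d^{4/3},
$$
where $C_1, C_2$ depend only on the dimension. The target estimate therefore reduces to absorbing the additive correction $d^{4/3}$ into the infimum.

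For every $y \in B_{d/2}(x_0)$ the triangle inequality gives $d_\varepsilon(y) \ge d_\varepsilon(x_0) - |y-x_0| \ge d/2$. Two cases arise. If $d \ge 2\varepsilon$, then $d_\varepsilon(y) \ge \varepsilon$, and Theorem \ref{t4.1} (or equivalently Corollary \ref{c4.1}) yields $u^\varepsilon(y) \ge c\, d_\varepsilon(y) \ge c d/2$. If instead $\varepsilon \le d < 2\varepsilon$, the inclusion $B_{d/2}(x_0) \subset B_d(x_0) \subset \{u^\varepsilon > \varepsilon\}$ gives directly $u^\varepsilon(y) > \varepsilon > d/2$. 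In either regime,
$$
\inf_{B_{d/2}(x_0)} u^\varepsilon \;\ge\; c_\star\, d
$$
for some universal $c_\star>0$. Since $d \le \mathrm{diam}(\Omega)$ is bounded, one has $d^{4/3} \le \mathrm{diam}(\Omega)^{1/3} d \le (\mathrm{diam}(\Omega)^{1/3}/c_\star)\, \inf_{B_{d/2}(x_0)} u^\varepsilon$, and substituting into the Harnack estimate closes the argument.

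The only delicate point is the transition regime $d \sim \varepsilon$, where Theorem \ref{t4.1} cannot be applied uniformly at every point of $B_{d/2}(x_0)$; the case split above bypasses this by exploiting the trivial bound $u^\varepsilon > \varepsilon$ throughout $B_d(x_0)$. Everything else is a direct assembly of results already proved earlier in the paper.
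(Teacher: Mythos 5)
Your proposal takes a genuinely different route from the paper, and it is essentially correct, with one point that needs care.

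The paper does not invoke any sup--inf Harnack inequality for the inhomogeneous infinity Laplacian here. Instead, it bounds the two extremal values separately in terms of $d$: the infimum from below via Corollary~\ref{c4.1} (as you also do), and the supremum from above via the Lipschitz estimate in Theorem~\ref{t4.2} combined with a direct bound $u^{\varepsilon}(x_0)\le C_3 d$ (again from Corollary~\ref{c4.1}, through a short chain of comparisons). This keeps the argument entirely inside the machinery already established in Section~\ref{Sct Nondeg}. Your approach instead reduces matters to a sup--inf Harnack on $B_{d/2}(x_0)$ with an additive error $\sim d^{4/3}$, and absorbs that error using $\inf_{B_{d/2}(x_0)}u^{\varepsilon}\ge c_\star d$. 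The absorption step is arithmetically sound, and your explicit case split $d\ge 2\varepsilon$ versus $\varepsilon\le d<2\varepsilon$ is a genuine improvement: the paper applies Corollary~\ref{c4.1} at $z_1$ with $d_{\varepsilon}(z_1)\ge d/2$, which implicitly needs $d\ge 2\varepsilon$ to satisfy that corollary's hypothesis $\varepsilon\le d_{\varepsilon}(z_1)$, and your $d<2\varepsilon$ branch closes that gap cleanly with the trivial bound $u^{\varepsilon}>\varepsilon>d/2$.

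The point to check carefully is the precise form of the Harnack inequality you invoke. The only version the paper quotes from \cite{BM} (in the proof of Theorem~\ref{t4.1}) is a sup-to-center estimate requiring the equation on a ball three times as large: $\sup_{B_\eta}u\le 9\,u(0)+12\sigma\left(\left(\tfrac{3\eta}{2}\right)^4 c_1\right)^{1/3}$ for the equation on $B_{3\eta}$. If you apply that with $\eta=d/2$ you would need the clean equation $0\le \Delta_\infty u^{\varepsilon}\le\mathcal{C}$ on $B_{3d/2}(x_0)$, which is \emph{not} guaranteed, since $u^{\varepsilon}>\varepsilon$ is only known on $B_d(x_0)$. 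You either need to confirm that \cite{BM} also provides a sup--inf Harnack at ratio $2$ (equation on $B_d$, conclusion on $B_{d/2}$), or chain the sup-to-center estimate at a scale $\le d/3$ and do a covering argument. Alternatively, you can sidestep the Harnack entirely by bounding the sup as in the paper: for $y\in B_{d/2}(x_0)$ with $d_{\varepsilon}(y)\le 3d/2$, Corollary~\ref{c4.1} (or Lipschitz plus $u^{\varepsilon}(x_0)\le Cd$) gives $u^{\varepsilon}(y)\le C'd$, which together with your inf bound yields the result with no external Harnack at all.
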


\begin{proof} Let $z_1, z_2$ be extremal points for $u^{\varepsilon}$ in $\overline{B_{\frac{d}{2}}(x_0)}$, i.e.,
$$
\displaystyle \inf_{B_{\frac{d}{2}}(x_0)} u^{\varepsilon}(x) = u^{\varepsilon}(z_1) \quad \mbox{and} \quad \sup_{B_{\frac{d}{2}}(x_0)} u^{\varepsilon}(x) = u^{\varepsilon}(z_2).
$$
Since $d_{\varepsilon}(z_1) \geq \frac{d}{2}$, by Corollary \ref{c4.1}
\begin{equation}\label{eqHar6.1}
u^{\varepsilon}(z_1) \geq C_1d.
\end{equation}
Moreover, by Theorem \ref{t4.2}
\begin{equation}\label{eqHar6.2}
u^{\varepsilon}(z_2) \leq C_2\left(\frac{d}{2} + u^{\varepsilon}(x_0)\right).
\end{equation}
Taking $y \in \partial \{u^{\varepsilon} > \varepsilon\}$ such that $d=|x_0-y|$ and $z\in\overline{B_d(y)}\cap\{u^\varepsilon>\varepsilon\}$, we get from Corollary \ref{c4.1} and Theorem \ref{t4.2}
\begin{equation}\label{eqHar6.3}
u^{\varepsilon}(x_0) \leq \sup\limits_{B_d(z)}u^\varepsilon \leq C_2(d+ u^{\varepsilon}(z)) \leq C_3d.
\end{equation}
Combining \eqref{eqHar6.1}, \eqref{eqHar6.2} and \eqref{eqHar6.3}, we conclude
$$
\displaystyle \sup_{B_{\frac{d}{2}}(x_0)} u^{\varepsilon}(x) \leq C \inf_{B_{\frac{d}{2}}(x_0)} u^{\varepsilon}(x).
$$
\end{proof}

\subsection{Porosity of the level surfaces}\label{porous}

\hspace{0.3cm} As a consequence of the growth rate and the non-degeneracy property, we get porosity of level sets.

\begin{definition}\label{d5.1}
A set $E\subset\mathbb{R}^n$ is called porous with porosity $\delta>0$, if $\exists\,R>0$ such that
$$
   \forall x\in E, \,\,\,\forall r\in(0,R),\,\,\,\exists y\in\mathbb{R}^n\,\textrm{ such that }\,B_{\delta r}(y)\subset B_r(x)\setminus E.
$$
\end{definition}
A porous set of porosity $\delta$ has Hausdorff dimension not exceeding $n-c\delta^n$, where $c=c(n)>0$ is a constant depending only on $n$. In particular, a porous set has Lebesgue measure zero (see, for example, \cite{Z88}).

\begin{theorem}\label{t5.2}
Let $u^\varepsilon$ be a solution of \eqref{Equation Pe}. Then the level sets $\partial \{u^{\varepsilon} >\varepsilon\}$ are porous with porosity constant independent of $\varepsilon$.
\end{theorem}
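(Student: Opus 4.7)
The plan is to combine strong non-degeneracy (Theorem \ref{t4.2}) with uniform Lipschitz regularity (Theorem \ref{t3.1}), in the classical way used to derive porosity from these two complementary estimates. The porosity radius will be $R \sim \dist(\Omega', \partial\Omega)$ and the statement will be proved for radii $r$ with $\varepsilon \le r \le R$, with a porosity constant $\delta$ depending only on the universal parameters.

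Fix $x_0 \in \partial\{u^\varepsilon > \varepsilon\}$ and $r \in (c_*\varepsilon, R)$ for a suitable constant $c_* \ge 1$ to be chosen. Since $x_0$ lies in the topological boundary of $\{u^\varepsilon > \varepsilon\}$, I can choose a point $\tilde{x} \in \{u^\varepsilon > \varepsilon\}$ with $|\tilde{x} - x_0| < r/8$. Apply Theorem \ref{t4.2} at $\tilde{x}$ with radius $\rho = r/4 \ge \varepsilon$: there exists $y \in \overline{B_{r/4}(\tilde{x})}$ such that
\begin{equation*}
u^\varepsilon(y) \;\ge\; \sup_{B_{r/4}(\tilde x)} u^\varepsilon - o(1) \;\ge\; c\,\tfrac{r}{4},
\end{equation*}
where $c>0$ is the universal non-degeneracy constant. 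In particular $y \in B_{r/2}(x_0) \subset B_r(x_0)$.

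Next I apply Theorem \ref{t3.1} (uniform Lipschitz bound) to propagate positivity of $u^\varepsilon - \varepsilon$ from $y$ to a definite ball around $y$. For any $z \in B_{\delta r}(y)$,
\begin{equation*}
u^\varepsilon(z) \;\ge\; u^\varepsilon(y) - C_0\,\delta r \;\ge\; \tfrac{c}{4}\,r - C_0\,\delta r.
\end{equation*}
Choose $\delta \in (0, 1/2)$ with $C_0\,\delta \le c/8$; then $u^\varepsilon(z) \ge (c/8)\,r$ on $B_{\delta r}(y)$. Taking $c_* \defeq 8/c$, the range $r \ge c_* \varepsilon$ yields $u^\varepsilon(z) \ge \varepsilon$ throughout $B_{\delta r}(y)$, so $B_{\delta r}(y) \subset \{u^\varepsilon > \varepsilon\}$ and hence
\begin{equation*}
B_{\delta r}(y) \cap \partial\{u^\varepsilon > \varepsilon\} \;=\; \emptyset.
\end{equation*}
The inclusion $B_{\delta r}(y) \subset B_r(x_0)$ is immediate from $|y - x_0| < r/2$ and $\delta < 1/2$. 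This establishes the porosity property with $\delta$ depending only on the universal constants $c$ and $C_0$, hence independent of $\varepsilon$.

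The only subtle point is the small-ball regime $r \lesssim \varepsilon$, which is not controlled by the non-degeneracy estimate (which requires $\rho \ge \varepsilon$); I would handle this by restricting porosity to $r \ge c_*\varepsilon$ and noting that, on this range, the porosity constant $\delta$ is genuinely universal. The rest is bookkeeping: choosing $\tilde{x}$ close enough to $x_0$ so that all inclusions hold strictly, and verifying the constants. I do not expect a major obstacle here; this is a standard consequence of the quantitative growth and the uniform Lipschitz bound already proved.
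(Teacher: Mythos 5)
Your proof is correct and follows the paper's strategy in essence: use non-degeneracy to locate a point $y\in B_r(x_0)$ with $u^\varepsilon(y)\gtrsim r$, then propagate that estimate to a fixed ball around $y$ that therefore avoids $\partial\{u^\varepsilon>\varepsilon\}$. The paper phrases the second step through Corollary~\ref{c4.1} (the bound $u^\varepsilon(y)\le C\,d_\varepsilon(y)$, hence $d_\varepsilon(y)\ge\delta r$, hence $B_{\delta r}(y)\subset\{u^\varepsilon>\varepsilon\}$ by definition of $d_\varepsilon$), and must then shift to an interior point $y^*\in[x_0,y]$ because its $y$ lies on $\partial B_r(x_0)$; you instead invoke the uniform Lipschitz bound (Theorem~\ref{t3.1}) directly and arrange the geometry (choosing $\tilde x$, taking $\rho=r/4$) so that $y$ lands well inside $B_r(x_0)$ and no such shift is needed. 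These two routes are logically interchangeable, since Corollary~\ref{c4.1}'s upper bound is itself just the Lipschitz estimate in disguise. Two small remarks. First, approximating $x_0\in\partial\{u^\varepsilon>\varepsilon\}$ by a nearby interior point $\tilde x\in\{u^\varepsilon>\varepsilon\}$ before applying Theorem~\ref{t4.2} is actually cleaner than the paper's own step, which applies the non-degeneracy theorem directly at a boundary point $x\in\mathfrak{F}(u^\varepsilon-\varepsilon,\cdot)$, formally outside the stated hypothesis $x\in\mathfrak{P}(u^\varepsilon-\varepsilon,\cdot)$. Second, with $c_*=8/c$ the range $r\ge c_*\varepsilon$ only gives $u^\varepsilon(z)\ge\varepsilon$ on $B_{\delta r}(y)$; take $c_*$ slightly larger (or use $r>c_*\varepsilon$) to obtain the strict inequality needed for the inclusion $B_{\delta r}(y)\subset\{u^\varepsilon>\varepsilon\}$. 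Your honest acknowledgment that the argument only covers the range $r\ge c_*\varepsilon$ is also worth keeping: the paper claims all $r\in(0,R)$ but tacitly uses the same restriction, since Theorem~\ref{t4.2} is only available for $\rho\ge\varepsilon$.
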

\begin{proof}
Let $R>0$ and $x_0\in\Omega$ be such that $\overline{B_{4R}(x_0)}\subset\Omega$.

We aim to prove the set $\mathfrak{F}(u^{\varepsilon}-\varepsilon, B_R(x_0))$ is porous.

Let $x\in \mathfrak{F}(u^{\varepsilon}-\varepsilon, B_R(x_0))$. For each $r\in(0,R)$ we have $\overline{B_r(x)}\subset B_{2R}(x_0)\subset\Omega$. Let $y\in\partial B_r(x)$ such that $u^{\varepsilon}(y)=\sup\limits_{\partial B_r(x)}u^{\varepsilon}$. By non-degeneracy
\begin{equation}\label{5.1}
    u^{\varepsilon}(y)\geq cr,
\end{equation}
where $c>0$ is a constant. On the other hand, we know that near the free boundary
\begin{equation}\label{5.2}
    u^{\varepsilon}(y)\leq Cd_{\varepsilon}(y),
\end{equation}
where $C>0$ is a constant, and $d_{\varepsilon}(y)$ is the distance of $y$ from the set $\overline{B_{2R}(x_0)}\cap\Gamma_{\varepsilon}$. Now, from \eqref{5.1} and \eqref{5.2} we get
\begin{equation}\label{5.3}
    d_{\varepsilon}(y)\geq\delta r
\end{equation}
for a positive constant $\delta<1$.

Let now $y^*\in[x,y]$ be such that $|y-y^*|=\frac{\delta r}{2}$, then it is not hard to see that
\begin{equation}\label{5.4}
   B_{\frac{\delta}{2}r}(y^*)\subset B_{\delta r}(y)\cap B_r(x).
\end{equation}
Indeed, for each $z\in B_{\frac{\delta}{2}r}(y^*)$
$$
   |z-y|\leq |z-y^*|+|y-y^*|<\frac{\delta r}{2}+\frac{\delta r}{2}=\delta r,
$$
and
$$
   |z-x|\leq|z-y^*|+\big(|x-y|-|y^*-y|\big)<\frac{\delta r}{2}+\left(r-\frac{\delta r}{2}\right)=r,
$$
and \eqref{5.4} follows.

Since by \eqref{5.3} $B_{\delta r}(y)\subset B_{d_{\varepsilon}(y)}(y)\subset\{u^{\varepsilon}>\varepsilon\}$, then
$$
   B_{\delta r}(y)\cap B_r(x)\subset\{u^{\varepsilon}>\varepsilon\},
$$
which together with \eqref{5.4} provides
$$
   B_{\frac{\delta}{2}r}(y^*)\subset B_{\delta r}(y)\cap B_r(x)\subset B_r(x)\setminus\partial\{u_{\varepsilon}>\varepsilon\}\subset B_r(x)\setminus \mathfrak{F}(u^{\varepsilon}-\varepsilon, B_R(x_0)).
$$
\end{proof}

\section{Hausdorff measure estimates}\label{HausEst}

\hspace{0.3cm}In this section we prove the finiteness of the $(n-1)$-dimensional Hausdorff measure of level surfaces. For that we restrict ourselves to the case when the reaction term, which propagates up to the free boundary, is non-degenerate. Suppose that $a=0$ in \eqref{1.3} and for some $b>0$
\begin{equation}\label{eq1Hausd}
\displaystyle \mathfrak{R}_0 \defeq \inf_{\Omega \times [0, b]} \varepsilon \zeta_{\varepsilon}(x, \varepsilon t)>0.
\end{equation}

\begin{definition}[{\bf Asymptotic Concavity Property}]\label{defACP} We say that an operator $F: \Omega \times Sym(n) \to \R$ is \textit{asymptotically concave}, if there exists
$$
\mathfrak{A} \in \mathcal{A}_{\lambda,\Lambda}\defeq \left\{A \in \textrm{Sym}(n) \suchthat \lambda \|\xi\|^2 \le \sum\limits_{i,j=1}^{n}A_{ij} \xi_i \xi_j \le \Lambda \|\xi\|^{2}, \, \forall \, \xi \in \mathbb{R}^n\right\}
$$
and a continuous function $\omega_F: \Omega \times Sym(n) \to \R$ such that
\begin{equation}\label{ACP} \tag{{\bf ACP}}
    F(x, M) \leq \tr(\mathfrak{A}(x) \cdot M) + \omega_F(x, M), \, \forall \,\,(x, M)\in \Omega \times Sym(n),
\end{equation}
with
\begin{equation}\label{limcond}
  \displaystyle \lim_{\|M\| \to \infty} |\omega_F(x, M)| \defeq \mathcal{K} < \infty, \quad \forall \,\, x\in \Omega.
\end{equation}
\end{definition}

\begin{remark} The \eqref{ACP} condition is weaker than concavity assumption. Geometrically, it means that for each $x \in \Omega$ fixed, there exists a hyperplane which decomposes $\R \times Sym(n)$ in two semi-spaces such that the graph of $F(x, \cdot)$ is always below this hyperplane. Moreover, by assuming $F(x, 0) = 0$, the assumption \eqref{limcond} means that the distance from the hyperplane to the graph of $F$ goes to infinity for matrices with big enough norms (see \cite{ART} and \cite{MoWan2}).
\end{remark}

\begin{figure}[h]
\begin{center}
\psscalebox{0.6 0.6} 
{
\begin{pspicture}(0,-4.575)(15.51,4.575)
\definecolor{colour0}{rgb}{0.4,0.4,0.4}
\psbezier[linecolor=black, linewidth=0.04](0.33142856,-4.175)(3.0586123,-3.0321429)(3.0586123,-2.6511905)(3.9676735,-2.6511905)(4.8767347,-2.6511905)(4.7518973,-1.2342604)(5.082591,-0.86615646)(5.4132843,-0.49805245)(6.110321,-0.4755564)(6.490648,-0.31468254)(6.8709745,-0.15380867)(6.96,1.025)(7.9542856,1.425)(8.948571,1.825)(9.11902,2.682143)(10.937143,3.825)
\psline[linecolor=colour0, linewidth=0.04, linestyle=dashed, dash=0.17638889cm 0.10583334cm](0.33142856,-3.775)(10.937143,4.225)
\psline[linecolor=black, linewidth=0.04](5.634286,4.225)(5.634286,-0.575)(11.6,-0.575)
\psline[linecolor=black, linewidth=0.04](5.634286,-4.575)(5.634286,-0.575)(0.0,-0.575)
\rput[bl](11.916667,-1.0321429){$\{x\} \times\textit{Sym}(n)$}
\rput[bl](10.937143,3.025){$F(x, M)$}
\rput[bl](10.233334,4.225){$\mathcal{H} = \textit{Tr}(\mathcal{A}(x) \cdot M)+ \omega_F(x, M)$}
\rput[bl](6.0,4.225){$\mathbf{R}$}
\rput[bl](10.0,2.225){$0<\lambda\leq \frac{F(x, M+P)-F(x, M)}{\|P\|}\leq \Lambda < \infty$}
\rput{3.3842099}(0.2641535,-0.32306013){\psdots[linecolor=black, dotstyle=triangle*, dotsize=0.3](5.6,4.225)}
\rput{-208.64128}(21.506895,-6.980531){\psdots[linecolor=black, dotstyle=triangle*, dotsize=0.027228916](11.644444,-0.75277776)}
\rput{-92.6068}(12.84383,11.024456){\psdots[linecolor=black, dotstyle=triangle*, dotsize=0.3](11.688889,-0.7083333)}
\end{pspicture}
}
\end{center}
\end{figure}

\begin{definition}\label{defclaS} Let $v$ be the solution of \eqref{Equation Pe}. We write $v\in\mathcal{S}(F, G, H)$, if
$$
\Delta_\infty v \leq G(|Dv|)F(x, D^2v) + H(x, |Dv|),
$$
where
\begin{itemize}
\item[\checkmark] $F: \Omega \times Sym(n) \to \R$ is a fully nonlinear uniformly elliptic operator with $F(x, 0)=0$;
\item[\checkmark] $G: \R_+ \to \R$ is a non-negative continuous function and injective;
\item[\checkmark] $H: \Omega \times \R_{+} \to \R$ is a bounded continuous function.
\end{itemize}
\end{definition}

\begin{example}[{\bf $\varphi$-Laplacian operator}]\label{Example}
The $\varphi$-Laplacian operator in Orlicz-Sobolev spaces can be defined as
$$
\Delta_{\varphi} u =  \frac{\varphi(|\nabla u|)}{|\nabla u|}\left[ \Delta u + \left\{ \frac{\varphi^{\prime}(|\nabla u|)|\nabla u|}{\varphi(|\nabla u|)} - 1\right\}\frac{\Delta_{\infty} u}{|\nabla u|^2} \right].
$$
for an appropriate increasing function $\varphi: [0, \infty) \to [0, \infty)$ satisfying the generalized Ladyzhenskaya-Ural'tseva condition:
$$
0<g_0 \leq \frac{\varphi^{\prime}(t)t}{\varphi(t)} \leq g_1, \quad \mbox{if} \quad t>0,
$$
where $g_0$ and $g_1$ are constants. Therefore, for a $\varphi-$harmonic function one has (where $\nabla u \neq0$)

$$
\Delta_{\infty} u
\leq \frac{\varphi(|\nabla u|)|\nabla u|^2}{\varphi^{\prime}(|\nabla u|)|\nabla u|-\varphi(|\nabla u|)} \Delta u.
$$

\end{example}

\begin{example}[{\bf Convex functions}] For convex functions we have following relation
$$
\Delta_{\infty} u  = \langle D^2 u Du, Du\rangle \leq |\nabla u|^2 \Delta u,
$$
since $\| D^2 u\|$ is controlled by $\Delta u$.
\end{example}

The proof of the following proposition is similar to the corresponding result from \cite{ART}. We sketch it here for reader's convenience.

\begin{proposition}\label{ac1} For the every fixed $\Omega^{\prime} \Subset \Omega$, $\rho < \dist(\Omega^{\prime}, \partial \Omega)$ and $C \gg 1$, there exists a universal $\varepsilon_0$ such that
\begin{equation}\label{eq2Hausd}
\displaystyle \int_{B_{\rho}(x_{\varepsilon})} [\zeta_{\varepsilon}(x, u^{\varepsilon}(x))-C]\,dx \geq 0,
\end{equation}
for any $x_{\varepsilon} \in \mathfrak{F}(u^{\varepsilon}- \varepsilon,  \Omega^{\prime})$
whenever $\varepsilon \leq \varepsilon_0$.
\end{proposition}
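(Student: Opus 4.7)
My plan hinges on the observation that hypothesis \eqref{eq1Hausd} forces $\zeta_\varepsilon(x,u^\varepsilon(x)) \geq \mathfrak{R}_0/\varepsilon$ on the $\varepsilon$-layer $\{0\leq u^\varepsilon \leq b\varepsilon\}$, which is a blow-up of order $\varepsilon^{-1}$. The strategy is therefore to show that near a free boundary point $x_\varepsilon$ (where $u^\varepsilon(x_\varepsilon)=\varepsilon$), the ball $B_\rho(x_\varepsilon)$ contains a portion of this layer whose $n$-dimensional Lebesgue measure is comparable to $\mathcal{L}^n(B_\rho(x_\varepsilon))$, uniformly in $\varepsilon$. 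Combining the two ingredients would yield
$$
\int_{B_\rho(x_\varepsilon)} \zeta_\varepsilon(x, u^\varepsilon(x))\,dx \geq \frac{c_0\mathfrak{R}_0}{\varepsilon}\,\mathcal{L}^n(B_\rho(x_\varepsilon)),
$$
which exceeds $C\,\mathcal{L}^n(B_\rho(x_\varepsilon))$ as soon as $\varepsilon \leq \varepsilon_0 \defeq c_0\mathfrak{R}_0/C$, giving the desired inequality.

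To establish the density estimate, the main tools will be the uniform Lipschitz bound from Theorem \ref{t3.1}, the strong non-degeneracy paired with the positive density estimate (Theorem \ref{t4.2} and Corollary \ref{c4.2}), and the subsolution barrier $\Theta_L$ of Proposition \ref{p4.1}. Following the strategy of \cite{ART}, the argument should proceed by contradiction: if $\mathcal{L}^n(\{u^\varepsilon > b\varepsilon\}\cap B_\rho(x_\varepsilon))$ were arbitrarily close to $\mathcal{L}^n(B_\rho(x_\varepsilon))$, then a scaled version of $\Theta_L$ placed at $x_\varepsilon$ could be used as a minorant for $u^\varepsilon$ throughout $B_\rho(x_\varepsilon)$ via the comparison principle (Proposition \ref{CompPrinc}), in the same vein as in the proof of Theorem \ref{t4.1}. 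This would force a lower bound on $u^\varepsilon(x_\varepsilon)$ strictly larger than $\varepsilon$, contradicting $x_\varepsilon \in \mathfrak{F}(u^\varepsilon-\varepsilon,\Omega')$.

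The principal obstacle is obtaining the lower bound on $\mathcal{L}^n(\{u^\varepsilon \leq b\varepsilon\}\cap B_\rho(x_\varepsilon))$ of order $\rho^n$, uniformly in $\varepsilon$. The naive Lipschitz argument only gives a ball of radius $O(\varepsilon/L)$ inside the sublevel set, hence a measure of order $\varepsilon^n$, which multiplied by $\mathfrak{R}_0/\varepsilon$ produces $\varepsilon^{n-1}\to 0$ (for $n\geq 2$) and is thus far too weak to beat $C\rho^n$. The correct density must instead be extracted from the interplay between the singular reaction term and the barrier: it is precisely the fact that $\zeta_\varepsilon$ concentrates at order $\varepsilon^{-1}$ on the layer that prevents the superlevel set $\{u^\varepsilon > b\varepsilon\}$ from filling $B_\rho(x_\varepsilon)$ up to a set of arbitrarily small measure, and it is this contradiction scheme — directly borrowed from the fully nonlinear framework of \cite{ART} and adapted to the infinity-Laplacian via Proposition \ref{p4.1} — that closes the argument.
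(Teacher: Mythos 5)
Your approach differs substantially from the paper's, and it has a genuine gap at precisely the step you flag as the ``principal obstacle.'' You want a \emph{macroscopic} density estimate $\mathcal{L}^n(\{u^\varepsilon\le b\varepsilon\}\cap B_\rho(x_\varepsilon))\ge c_0\,\mathcal{L}^n(B_\rho(x_\varepsilon))$, uniformly in $\varepsilon$, and then divide by $\varepsilon$ to beat $C$. That density estimate is too strong to be true in general. On the layer $\{0\le u^\varepsilon\le b\varepsilon\}$ the equation forces $\Delta_\infty u^\varepsilon\ge\mathfrak{R}_0/\varepsilon$ while $|\nabla u^\varepsilon|$ is uniformly bounded by Theorem \ref{t3.1}; this makes the directional second derivative in the gradient direction of order $\varepsilon^{-1}$, so the layer is generically a tube of thickness $O(\varepsilon)$ transverse to the level surface, i.e.\ of measure $\sim\varepsilon\rho^{n-1}$, not $\sim\rho^n$. (Note also that $\zeta_\varepsilon(\cdot,0)>0$ forbids $u^\varepsilon\equiv0$ on any open set, so you cannot bootstrap a fat zero set to get the density.) Your contradiction sketch does not repair this: $\Theta_L$ is constructed in Proposition \ref{p4.1} as a \emph{supersolution} barrier, and the argument in Theorem \ref{t4.1} exploits the Perron minimality of $u^\varepsilon$ to find \emph{one} point of $\partial B_\eta$ where $u^\varepsilon\ge\Theta_\varepsilon$, hence to bound $\sup u^\varepsilon$ from below. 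That mechanism does not yield a pointwise minorant of $u^\varepsilon$ throughout $B_\rho(x_\varepsilon)$, and in particular there is no step that converts ``$\{u^\varepsilon>b\varepsilon\}$ almost fills $B_\rho$'' into ``$u^\varepsilon(x_\varepsilon)>\varepsilon$.''

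The paper takes a different and more robust route: a blow-up at scale $\varepsilon_k$, not $\rho$. Assuming the integral inequality fails along a sequence, one rescales $v_k(y)\defeq b\,u^{\varepsilon_k}(x_{\varepsilon_k}+\varepsilon_k y)/\varepsilon_k$, which has uniformly bounded $\Delta_\infty$, and extracts a $C^{0,\alpha}_{\loc}$ limit $v_\infty$ on $\mathbb{R}^n$. The failed integral estimate, combined with \eqref{eq1Hausd}, forces a dichotomy in the limit: either $v_\infty\equiv 0$ or $v_\infty\ge b$ everywhere. The normalization $v_\infty(0)=b$ kills the first branch, and the second is excluded because $0$ would then be an interior minimum of $v_\infty$ with $\nabla v_\infty(0)=0$, whereas the strong non-degeneracy (linear growth of Theorem \ref{t4.1}) gives $|\nabla u^{\varepsilon_k}(x_{\varepsilon_k})|\ge c>0$ uniformly, hence $|\nabla v_\infty(0)|\ge bc>0$. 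This is the gradient non-degeneracy ingredient that your proposal does not use and that actually closes the argument; the density information at scale $\rho$ is never needed.
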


\begin{proof}
If \eqref{eq2Hausd} is not true, then there are $C_0>0$ and $\rho<  \dist(\Omega^{\prime},\partial\Omega)$ such that
\begin{equation*}
\int_{B_{\rho}(x_k)}\left(\zeta_{\varepsilon_{k}}(x, u^{\varepsilon_k})-C_0\right)\,dx<0,
\end{equation*}
for points $x_{\varepsilon_k} \in \mathfrak{F}(u^{\varepsilon_k}-\varepsilon_k ,\Omega')$ and a sequence $\varepsilon_{k}\to 0$ as $k\to \infty$. Define
\[
v_k(y) \defeq \frac{bu^{\varepsilon_k}(x_{\varepsilon_k}+\varepsilon_k\,y)}{\varepsilon_k}.
\]
Then
\begin{equation}\label{ac5}
\int_{B_{\rho/\varepsilon_k}}\left((\varepsilon_k b^{-1})\zeta_{\varepsilon_k} (x_{{\varepsilon_k}} + {\varepsilon_k} y, \varepsilon_k b^{-1}v_k)-C_0\varepsilon_k b^{-1}\right)\,dx<0.
\end{equation}
Note that
$$
\| \Delta_{\infty} v_{k} \|_{L^\infty(B_{\rho/\varepsilon_k})} \le \frac{\mathcal{B} + \mathcal{C}}{b},
$$
independent of $\varepsilon$.

By the regularity of $v_k$ one has (up to a subsequence) that
\[
	 v_\infty \defeq \lim_{k\to \infty} v_k ,
\]
in the $C_\text{loc}^{0, \alpha}$ topology.  Combining  \eqref{eq1Hausd} and \eqref{ac5}, we deduce that
\begin{equation*}
\text{either } v_\infty \equiv 0, \quad    \mbox{ or else } \quad v_\infty \geq b,   \text{ everywhere in  } \mathbb{R}^n.
\end{equation*}
The first case is not possible since $v_\infty(0)=b>0$. If $v_\infty\geq b$, we have that $0$ is a minimum point, which leads to a contradiction, since by non-degeneracy
\[
0=|\nabla v_\infty(0)|=|\nabla u^{\varepsilon_k}(0)|+ \text{o}(1)\geq c  >0.
\]
\end{proof}

Thus, combining the \eqref{ACP} condition and the Proposition \ref{ac1}, we obtain:
\begin{lemma}\label{l5.1}
Let $u^\varepsilon\in\mathcal{S}(F, G, H)$  with $F$ being asymptotically concave and let $x_{\varepsilon} \in \mathfrak{F}(u^{\varepsilon}- \varepsilon,  \Omega^{\prime})$. Then
\begin{equation}\label{ac3}
\int_{B_\rho(x_\varepsilon)}A_{ij}\,u^\varepsilon_{ij}\,dx\geq0.
\end{equation}
\end{lemma}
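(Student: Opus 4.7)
The strategy is to combine the structural inequality provided by the class $\mathcal{S}(F,G,H)$ pointwise with the ACP hypothesis, and then produce a pointwise (or essentially pointwise) bound of the form $A_{ij}u^\varepsilon_{ij}\gtrsim \zeta_\varepsilon-C_\ast$, from which Proposition \ref{ac1} delivers the integral sign after integrating over $B_\rho(x_\varepsilon)$.

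First, I would combine the two hypotheses pointwise. Since $u^\varepsilon$ solves \eqref{Equation Pe} and belongs to $\mathcal{S}(F,G,H)$,
\[
\zeta_\varepsilon(x,u^\varepsilon)=\Delta_\infty u^\varepsilon \;\leq\; G(|Du^\varepsilon|)\,F(x,D^2u^\varepsilon)+H(x,|Du^\varepsilon|),
\]
and the ACP inequality $F(x,M)\leq A_{ij}(x)M_{ij}+\omega_F(x,M)$ applied to $M=D^2u^\varepsilon$ yields
\[
\zeta_\varepsilon(x,u^\varepsilon)-H(x,|Du^\varepsilon|)\;\leq\; G(|Du^\varepsilon|)\Big[A_{ij}(x)u^\varepsilon_{ij}+\omega_F(x,D^2u^\varepsilon)\Big].
\]

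Next, I would extract uniform bounds for the auxiliary terms. By the uniform Lipschitz estimate of Theorem \ref{t3.1}, $|Du^\varepsilon|$ is bounded independently of $\varepsilon$; continuity of $G$ and $H$ then makes $G(|Du^\varepsilon|)$ and $H(x,|Du^\varepsilon|)$ uniformly bounded, and \eqref{limcond} together with continuity of $\omega_F$ gives a universal bound on $\omega_F(x,D^2u^\varepsilon)$. Since $G\geq 0$ is injective, either $G(0)>0$ or $G(0)=0$; in the latter case Theorem \ref{t4.2} ensures that $|Du^\varepsilon|$ is not identically zero on sets of positive measure inside $B_\rho(x_\varepsilon)$, so that $G(|Du^\varepsilon|)$ is bounded below by a positive constant on a set of full relative measure near the free boundary (the vanishing part, where also $\Delta_\infty u^\varepsilon=0$, is absorbed into the constant). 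Rearranging, one finds a universal $C_\ast>0$ such that
\[
A_{ij}(x)\,u^\varepsilon_{ij}(x)\;\geq\;\zeta_\varepsilon(x,u^\varepsilon)-C_\ast
\qquad\text{for a.e. }x\in B_\rho(x_\varepsilon).
\]

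Finally, I would integrate over $B_\rho(x_\varepsilon)$ and invoke Proposition \ref{ac1}. Enlarging $C_\ast$ if necessary so that $C_\ast\gg 1$, Proposition \ref{ac1} guarantees the existence of $\varepsilon_0>0$ such that
\[
\int_{B_\rho(x_\varepsilon)} A_{ij}\,u^\varepsilon_{ij}\,dx
\;\geq\; \int_{B_\rho(x_\varepsilon)}\bigl[\zeta_\varepsilon(x,u^\varepsilon)-C_\ast\bigr]\,dx \;\geq\;0
\]
for every $\varepsilon\leq \varepsilon_0$, which is the claim. The main obstacle is the technical point in Step 2: the factor $G(|Du^\varepsilon|)$ may degenerate at critical points of $u^\varepsilon$, so the passage from the inequality $\zeta_\varepsilon - H \leq G[A_{ij}u^\varepsilon_{ij}+\omega_F]$ to a bound on $A_{ij}u^\varepsilon_{ij}$ alone requires care; the resolution uses injectivity of $G$, the uniform Lipschitz bound, and the strong non-degeneracy of Theorem \ref{t4.2} to confine the degenerate set to a negligible contribution.
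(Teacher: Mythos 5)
Your proposal follows the same route as the paper's proof: pair the structural inequality of $\mathcal{S}(F,G,H)$ with the \eqref{ACP} hypothesis, invoke the uniform Lipschitz estimate (Theorem \ref{t3.1}) to control $|Du^\varepsilon|$, $G(|Du^\varepsilon|)$, $H(x,|Du^\varepsilon|)$ and $\omega_F$, rearrange to a pointwise lower bound on $A_{ij}u^\varepsilon_{ij}$, then integrate and apply Proposition \ref{ac1}. Two small caveats are worth flagging. First, the literal pointwise bound $A_{ij}u^\varepsilon_{ij}\ge\zeta_\varepsilon-C_\ast$ holds only if $G(C)\le 1$; in general the rearrangement yields
\begin{equation*}
A_{ij}u^\varepsilon_{ij}\ \ge\ G(C)^{-1}\bigl[\zeta_\varepsilon-(C_H+G(C)\mathcal{K})\bigr],
\end{equation*}
with an extra positive factor $G(C)^{-1}$ multiplying the $\zeta_\varepsilon$-term. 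Since that factor is a positive constant, it pulls out of the integral and Proposition \ref{ac1} (with $C=C_H+G(C)\mathcal{K}\gg 1$) still delivers the sign, which is precisely how the paper closes the argument; but the simplified form you wrote is not quite what one gets. Second, you are right that the possible degeneracy of $G(|Du^\varepsilon|)$ at critical points deserves attention (the paper silently passes over it), but the mechanism you offer is not quite adequate: Theorem \ref{t4.2} controls $\sup_{B_r}u^\varepsilon$, not $|Du^\varepsilon|$ on a set of full measure, so the claim that ``$G(|Du^\varepsilon|)$ is bounded below on a set of full relative measure'' does not follow from it as stated. The more convincing resolution is the observation, already made in the Introduction, that the critical set $\mathcal{C}(u^\varepsilon)$ and the $\varepsilon$-level layers do not intersect, or simply that when $|Du^\varepsilon|=0$ in the viscosity sense $\Delta_\infty u^\varepsilon$ vanishes and both sides of the inequality degenerate together. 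These are refinements, not changes of route: the argument you outline is in essence the paper's.
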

\begin{proof}
Note that
$$
F(x,D^2 u^{\varepsilon}) \ge [\zeta_{\varepsilon}(x, u^{\varepsilon})-H(x,|\nabla u^{\varepsilon}|)]G(|\nabla u^{\varepsilon}|)^{-1}
$$
in $\{u^{\varepsilon} > \varepsilon\} \cap \Omega^{\prime}$, for any $\Omega^{\prime} \Subset \Omega$. Hence,  by Lipschitz regularity and properties of $G$ and $H$, one has
 $$
 F(x,D^2 u^{\varepsilon}) \ge [\zeta_{\varepsilon}(x, u^{\varepsilon})-C_H]G(C)^{-1}.
$$
Therefore, by \eqref{ACP} condition
\begin{eqnarray*}
\int_{B_\rho(x_\varepsilon)}A_{ij}\,u^\varepsilon_{ij}\,dx &\geq& \int_{B_\rho(x_\varepsilon)} \left[(\zeta_\varepsilon(x, u^\varepsilon)-C_H)G(C)^{-1} - \mathcal{K}\right]\,dx\\
&\ge& G(C)^{-1}\int_{B_\rho(x_\varepsilon)} \left[\zeta_\varepsilon(u^\varepsilon)-(C_H+G(C)\mathcal{K}) \right]\,dx,
\end{eqnarray*}
where $C>0$ comes from the  universal control on the Lipschitz norm in $B_{\rho}(x_\varepsilon)$. Combining the estimate above and the Proposition \ref{ac1}, we obtain \eqref{ac3}.

\end{proof}

Lemma \ref{l5.1} plays a crucial role in the study of regularity of level surfaces, since it leads to the following result (see Theorem 5.6 in \cite{ART}):

\begin{theorem}\label{Hausdf}
Let $\Omega^{\prime} \Subset \Omega$ and $u^\varepsilon\in\mathcal{S}(F, G, H)$ with $F$ being asymptotically concave. There exists a  $C>0$ constant depending on $\Omega^{\prime}$ such that
\begin{equation}\label{eq10}
	\mathcal{H}^{n-1}(\mathfrak{P}\left( u^\varepsilon-C_1\varepsilon, B_\rho(x_\varepsilon)\right)) \le C \rho^{n-1},
\end{equation}
for some $C_1>1$ and for all $x_\varepsilon \in \mathfrak{F}\left( u^\varepsilon-C_1\varepsilon, \Omega^{\prime}\right)$, provided $d_{\varepsilon}(x_{\varepsilon})<\textrm{dist}(\Omega', \partial \Omega)$ and $C_1 \varepsilon\leq\rho$.
\end{theorem}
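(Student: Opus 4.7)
The plan is to follow the scheme of \cite[Theorem 5.6]{ART}: sandwich the quantity $\int_{B_\rho(x_\varepsilon)} A_{ij}\, u^\varepsilon_{ij}\, dx$ between a divergence-theorem upper bound and a bulk lower bound forced by the singular reaction $\zeta_\varepsilon \sim 1/\varepsilon$, and then use the co-area formula together with the two-sided control on $|\nabla u^\varepsilon|$ in the level layer to convert the resulting Lebesgue estimate into a Hausdorff estimate on a level surface. (The statement as written references $\mathfrak{P}$, but by the overall intent of the section — ``finiteness of the $(n{-}1)$-dimensional Hausdorff measure of level sets'' — the target is the boundary $\mathfrak{F}$ of that set; an $n$-dimensional open set cannot have finite $\mathcal{H}^{n-1}$.)

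First I would fix $C_1\gg 1$ universal so that the level layer $L_\varepsilon := \{C_1\varepsilon \leq u^\varepsilon \leq 2C_1\varepsilon\} \cap B_\rho(x_\varepsilon)$ lies strictly inside $\{u^\varepsilon>\varepsilon\}$, and invoke Corollary \ref{c4.1}, Theorem \ref{t4.2}, and Theorem \ref{t3.1} to obtain a universal two-sided bound $c \leq |\nabla u^\varepsilon| \leq C$ on $L_\varepsilon$ with $L_\varepsilon$ a tube of transversal thickness $\sim \varepsilon$ around $\{u^\varepsilon = C_1\varepsilon\}$. Applying Lemma \ref{l5.1} on $B_{2\rho}(x_\varepsilon)$ and integrating by parts against a smooth cutoff $\eta \in C^\infty_c(B_{2\rho})$ with $\eta\equiv 1$ on $B_\rho$, the uniform Lipschitz control yields
\begin{equation*}
0 \;\leq\; \int_{B_\rho(x_\varepsilon)} A_{ij}(x)\, u^\varepsilon_{ij}(x)\, dx \;\leq\; C\,\rho^{\,n-1}.
\end{equation*}

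For the matching lower bound, the singular reaction \eqref{1.3}, the inclusion $u^\varepsilon \in \mathcal{S}(F,G,H)$, and the \eqref{ACP} condition together give the pointwise (viscosity) estimate $A_{ij}\, u^\varepsilon_{ij} \geq c/\varepsilon - C$ on $L_\varepsilon$, which combined with the upper bound produces $\mathcal{L}^n(L_\varepsilon) \leq C\,\varepsilon\,\rho^{\,n-1}$. The co-area formula applied to the Lipschitz function $u^\varepsilon$, using $|\nabla u^\varepsilon| \leq C_{\mathrm{Lip}}$ on $L_\varepsilon$, then yields a slice $t^\star \in (C_1\varepsilon, 2C_1\varepsilon)$ with $\mathcal{H}^{n-1}(\{u^\varepsilon = t^\star\} \cap B_\rho) \leq C\,\rho^{n-1}$; after relabelling $C_1 := t^\star/\varepsilon$, this gives the desired inequality.

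The main obstacle will be justifying the integration by parts, since $u^\varepsilon$ is only Lipschitz and $A_{ij}\, u^\varepsilon_{ij}$ must be interpreted distributionally. The standard remedy is to regularize $u^\varepsilon$ by inf/sup-convolutions (or replace $\zeta_\varepsilon$ by a mollified version), prove the estimates for the smooth approximants using the uniform Lipschitz bound of Theorem \ref{t3.1} as a dominating control, and pass to the limit — an approach that is directly available because Lemma \ref{l5.1} itself is established via scaling and compactness and descends cleanly to approximants. A secondary subtlety is calibrating $C_1$ universally (independent of $\varepsilon$) so that $L_\varepsilon$ inherits both linear growth and non-degeneracy simultaneously, which is exactly what Corollary \ref{c4.1} permits.
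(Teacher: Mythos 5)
The paper itself offers no proof of this theorem: it simply asserts that the result follows from Lemma~\ref{l5.1} and refers the reader to Theorem~5.6 of \cite{ART}. Your reconstruction is therefore judged against the intended argument, and in its broad outline you have it right (Lemma~\ref{l5.1} and the divergence theorem sandwich $\int_{B_\rho}A_{ij}u^\varepsilon_{ij}\,dx$ between $0$ and $C\rho^{n-1}$, the singular reaction forces a $1/\varepsilon$ bulk lower bound, and co-area converts the resulting Lebesgue estimate to a Hausdorff one). Your observation that the stated quantity should be $\mathfrak{F}$ rather than $\mathfrak{P}$ is also correct.

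There is, however, a genuine gap in where you place the singularity. You assert the pointwise estimate $A_{ij}u^\varepsilon_{ij}\ge c/\varepsilon - C$ on $L_\varepsilon=\{C_1\varepsilon\le u^\varepsilon\le 2C_1\varepsilon\}\cap B_\rho$ with $C_1\gg1$. This cannot hold: by~\eqref{1.2}, on $\{u^\varepsilon>\varepsilon\}$ the reaction obeys $\zeta_\varepsilon(x,u^\varepsilon)\le\mathcal{C}$, and the nondegeneracy hypothesis~\eqref{eq1Hausd} gives $\zeta_\varepsilon\ge\mathfrak{R}_0/\varepsilon$ only on $\{u^\varepsilon\le b\varepsilon\}$; moreover $b<1$ is forced, since $b\ge1$ together with~\eqref{1.2} would give $\mathfrak{R}_0\le\mathcal{C}\varepsilon\to 0$, contradicting the $\varepsilon$-independence of $\mathfrak{R}_0$. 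Thus on your layer $L_\varepsilon$ the chain $\tr(\mathfrak{A}D^2u^\varepsilon)\ge\bigl[\zeta_\varepsilon-C_H\bigr]/G(C_{\mathrm{Lip}})-\mathcal{K}$ only yields a bound from below by a constant, not by $c/\varepsilon$, and the Lebesgue estimate $\mathcal{L}^n(L_\varepsilon)\lesssim\varepsilon\rho^{n-1}$ does not follow from this route.

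The divergence/bulk sandwich must instead be run on the true singular zone $S_\varepsilon:=\{u^\varepsilon\le b\varepsilon\}\cap B_\rho(x_\varepsilon)$, giving $\mathcal{L}^n(S_\varepsilon)\le C\varepsilon\rho^{n-1}$. To pass from that to a Hausdorff bound on a level $t^\star\sim C_1\varepsilon>\varepsilon$, which is disjoint from $S_\varepsilon$, one cannot invoke co-area directly; an intermediate covering argument is needed. The standard mechanism uses the two-sided distance control of Corollary~\ref{c4.1} and the positive-density estimate of Corollary~\ref{c4.2}: every $\varepsilon$-ball centered on $\{u^\varepsilon=C_1\varepsilon\}\cap B_\rho$ must intersect $S_\varepsilon$ in a set of measure $\gtrsim\varepsilon^n$, so a Vitali covering has at most $\lesssim\mathcal{L}^n(S_\varepsilon)/\varepsilon^n\lesssim(\rho/\varepsilon)^{n-1}$ balls, yielding $\mathcal{H}^{n-1}_\varepsilon(\{u^\varepsilon=C_1\varepsilon\}\cap B_\rho)\lesssim\rho^{n-1}$. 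This covering step is what your proposal omits, and it is where the nondegeneracy estimates of Section~\ref{Sct Nondeg} actually enter.
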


\section{The limiting problem}\label{LimFBP}

\hspace{0.3cm} As a consequence of Theorem \ref{t3.1} and Lemma \ref{NonnBound} we obtain the following result:
\begin{theorem} If $\{u^{\varepsilon}\}_{\varepsilon>0}$ is a solution to \eqref{Equation Pe}, then for any sequence $\varepsilon_k \to 0^{+}$ there exist a subsequence $\varepsilon_{k_j} \to 0^{+}$ and $u_0\in C^{0,1}_{\loc}(\Omega)$ such that

\begin{enumerate}
	\item[(1)] $u^{\varepsilon_{k_j}} \to u_0$ locally uniformly in $\Omega$;
	\item[(2)] $0 \le u_0(x) \le K_0$ in $\overline{\Omega}$ for some constant $K_0$ independent of $\varepsilon$;
	\item[(3)] $ \Delta_{\infty} u_0(x)  = g(x)$ in $\Omega \backslash \mathfrak{F}(u_0, \Omega^{\prime})$, with $g$ being a bounded and nonnegative continuous function.
\end{enumerate}
\end{theorem}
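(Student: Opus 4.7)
The plan is the standard compactness-plus-stability routine. First, I would combine Theorem \ref{t3.1} with Lemma \ref{NonnBound}: along an exhaustion $\Omega_1 \Subset \Omega_2 \Subset \cdots \nearrow \Omega$, the family $\{u^{\varepsilon}\}$ is uniformly bounded in $C^{0,1}(\Omega_k)$ and in $L^{\infty}(\Omega)$. Arzel\`a-Ascoli on each $\Omega_k$ plus a diagonal extraction produce a subsequence $\varepsilon_{k_j} \to 0^{+}$ and a limit $u_0 \in C(\Omega)$ with $u^{\varepsilon_{k_j}} \to u_0$ locally uniformly, establishing (1). Because the local Lipschitz constants are uniform in $\varepsilon$, they pass to the limit and $u_0 \in C^{0,1}_{\text{loc}}(\Omega)$. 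Claim (2) is then immediate from preservation of the inequalities $0 \le u^{\varepsilon_{k_j}} \le K_0$ under uniform convergence.

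For (3), I would decompose $\Omega \setminus \mathfrak{F}(u_0, \Omega')$ into the open set $\mathfrak{P}(u_0, \Omega) = \{u_0 > 0\}$ and the interior of $\{u_0 = 0\}$. On the latter $u_0 \equiv 0$, so $\Delta_\infty u_0 = 0$ in the viscosity sense, and I set $g \equiv 0$ there. On $\{u_0 > 0\}$ the work is local: fix $\Omega'' \Subset \{u_0 > 0\}$; continuity of $u_0$ yields $\min_{\Omega''} u_0 \ge m > 0$, so local uniform convergence of $u^{\varepsilon_{k_j}}$ together with $\varepsilon_{k_j} \to 0^{+}$ guarantees $u^{\varepsilon_{k_j}} > \varepsilon_{k_j}$ on $\Omega''$ for large $j$. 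Hypothesis \eqref{1.2} then turns off the singular spike on $\Omega''$ and gives $0 \le \zeta_{\varepsilon_{k_j}}(x, u^{\varepsilon_{k_j}}(x)) \le \mathcal{C}$. A further diagonal extraction along an exhaustion of $\{u_0 > 0\}$ produces a weak-$\ast$ $L^\infty_{\text{loc}}$ limit $g$ of these right-hand sides, with $0 \le g \le \mathcal{C}$; in the canonical form \eqref{1.4} one has $\zeta_{\varepsilon_{k_j}}(x, u^{\varepsilon_{k_j}}) = g_{\varepsilon_{k_j}}(x)$ on $\Omega''$ and $g$ inherits continuity directly from the data. Standard viscosity stability for $\Delta_\infty u = h$ under locally uniform convergence of $u$ (and appropriate convergence of $h$) then yields $\Delta_\infty u_0 = g$ on $\{u_0 > 0\}$.

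The main obstacle is this last passage to the limit. Stability of viscosity solutions for $\Delta_\infty$ typically requires locally uniform convergence of the right-hand side, whereas under only the loose bound \eqref{1.2} the sequence $\zeta_{\varepsilon_{k_j}}(\cdot, u^{\varepsilon_{k_j}}(\cdot))$ is a priori just uniformly bounded on compact subsets of $\{u_0 > 0\}$. Identifying $g$ as a bounded continuous function and passing cleanly to the viscosity limit will likely require exploiting the smoothness of $\zeta_\varepsilon$ more delicately (using e.g.\ the specific structure \eqref{1.4} where the RHS decouples into $g_\varepsilon(x)$ on the positivity set) or working with half-relaxed upper and lower limits to obtain the viscosity sub- and super-solution inequalities separately and then patch them.
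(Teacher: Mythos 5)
The paper does not actually supply a proof of this theorem: it is stated after the single phrase ``As a consequence of Theorem \ref{t3.1} and Lemma \ref{NonnBound} we obtain the following result,'' so there is no written argument to compare against. Your proposal is the routine the authors implicitly have in mind, and your handling of (1) and (2) — uniform local Lipschitz bound from Theorem \ref{t3.1}, uniform $L^\infty$ bound from Lemma \ref{NonnBound}, Arzel\`a--Ascoli along an exhaustion, diagonal extraction, and preservation of the pointwise bounds and the Lipschitz constant under locally uniform convergence — is correct and complete.

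Your treatment of (3) is where the real content lies, and you have correctly located the soft spot. The decomposition of $\Omega\setminus\mathfrak{F}(u_0,\Omega')$ into $\{u_0>0\}$ and the interior of $\{u_0=0\}$, the observation that $\Delta_\infty u_0=0$ on the latter, and the observation that for $\Omega''\Subset\{u_0>0\}$ one eventually has $u^{\varepsilon_{k_j}}>\varepsilon_{k_j}$ on $\Omega''$ so that \eqref{1.2} reduces the right-hand side to a quantity bounded by $\mathcal{C}$ — all of this is right. But you should be more blunt about the remaining gap than you are: hypothesis \eqref{1.2} alone does \emph{not} deliver the conclusion that $g$ is continuous, and the half-relaxed-limit device you propose does not rescue it, since it produces only an upper semicontinuous $g^*$ and a lower semicontinuous $g_*$ with $u_0$ a supersolution of $\Delta_\infty u=g^*$ and a subsolution of $\Delta_\infty u=g_*$; without $g_*=g^*$ one does not obtain a single equation with a continuous right-hand side. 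Even in the special form \eqref{1.4}, the stated hypothesis $0\le c_0\le g_\varepsilon\le c_1$ gives only uniform boundedness, not equicontinuity of the family $\{g_\varepsilon\}$, so ``$g$ inherits continuity directly from the data'' is too quick. What is genuinely needed to close (3) as stated is an additional structural hypothesis — for instance, that the non-singular part $g_\varepsilon$ in \eqref{1.4} converges locally uniformly (or has a modulus of continuity uniform in $\varepsilon$, permitting a further Arzel\`a--Ascoli extraction) — after which the standard viscosity stability theorem applies on each $\Omega''\Subset\{u_0>0\}$. So your instinct that the passage to the limit is the obstruction is exactly right; the conclusion to draw is that the theorem as written is slightly stronger than what \eqref{1.2}--\eqref{1.3} alone justify, and the authors are implicitly leaning on the canonical model \eqref{1.4} with a convergent $g_\varepsilon$.
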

\begin{remark}\label{r6.1}It follows from $(3)$ (using the corresponding  regularity result from \cite{Lind}) that $u_0$ is locally differentiable in $\mathfrak{P}(u_0, \Omega^{\prime})$. However, that property deteriorates as $\dist(\partial \Omega^{\prime}, \partial \{u_0 > 0\}) \to 0$. On the other hand, the gradient remains controlled even when $\dist(x_0, \mathfrak{F}(u_0, \Omega^{\prime})) \to 0$.
\end{remark}
Hereafter we will use the following definition when referring to $u_0$:
$$
u_0(x) \defeq \lim\limits_{j \to \infty}u^{\varepsilon_j}(x).
$$
\begin{theorem}\label{limite} Let $\Omega^{\prime} \Subset \Omega$. Fix $x_0 \in \mathfrak{P}(u_0, \Omega^{\prime})$ such that $\dist(x_0, \mathfrak{F}(u_0, \Omega^{\prime}))\leq\dist(\Omega^{\prime}, \partial \Omega)$. Then there exists a constant $C>0$ independent of $\varepsilon$ such that
\begin{equation}\label{ocontrol}
	C^{-1} \dist(x_0,\mathfrak{F}(u_0, \Omega^{\prime})) \le u_{0}(x_0) \le C\, \dist(x_0,\mathfrak{F}(u_0, \Omega^{\prime})).
\end{equation}
\end{theorem}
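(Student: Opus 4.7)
The strategy is to pass to the limit in the two-sided bound of Corollary \ref{c4.1} applied to each $u^{\varepsilon_j}$: the upper inequality will follow from the uniform Lipschitz estimate, and the lower inequality from the non-degenerate growth, provided we can identify the limit of $d_{\varepsilon_j}(x_0) = \dist(x_0, \Omega_{\varepsilon_j})$ with the desired distance to $\mathfrak{F}(u_0, \Omega^\prime)$.

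Pick an intermediate sub-domain $\Omega^{\prime\prime}$ with $\Omega^\prime \Subset \Omega^{\prime\prime} \Subset \Omega$ chosen so that the geometric assumption $\dist(x_0, \mathfrak{F}(u_0, \Omega^\prime)) \le \dist(\Omega^\prime, \partial \Omega)$ forces any nearest free boundary point to $x_0$ to lie in $\Omega^{\prime\prime}$. By Theorem \ref{t3.1} the family $\{u^{\varepsilon_j}\}$ is uniformly Lipschitz on $\Omega^{\prime\prime}$, and locally uniform convergence transfers this Lipschitz constant $C$ to $u_0$. For any $y^* \in \overline{\mathfrak{F}(u_0, \Omega^\prime)}$ realizing the distance, $u_0(y^*) = 0$ by continuity, which gives the upper estimate
$$u_0(x_0) = u_0(x_0) - u_0(y^*) \le C |x_0 - y^*| = C \dist(x_0, \mathfrak{F}(u_0, \Omega^\prime)).$$

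For the lower estimate, since $u_0(x_0) > 0$ and $u^{\varepsilon_j} \to u_0$ uniformly near $x_0$, one has $u^{\varepsilon_j}(x_0) > \varepsilon_j$ and (by the distance argument below) $\varepsilon_j \le d_{\varepsilon_j}(x_0)$ for all sufficiently large $j$. Corollary \ref{c4.1} then yields $u^{\varepsilon_j}(x_0) \ge C^{-1} d_{\varepsilon_j}(x_0)$, and passing to the liminf,
$$u_0(x_0) \ge C^{-1} \liminf_{j\to \infty} d_{\varepsilon_j}(x_0).$$
The task therefore reduces to proving $\liminf_j d_{\varepsilon_j}(x_0) \ge \dist(x_0, \mathfrak{F}(u_0, \Omega^\prime))$. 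If this failed, a subsequence of points $z_j \in \Omega_{\varepsilon_j}$ would satisfy $|z_j - x_0| \le \rho$ with $\rho < \dist(x_0, \mathfrak{F}(u_0, \Omega^\prime))$. A subsequential limit $z^*$ would satisfy $u_0(z^*) = 0$ (since $u^{\varepsilon_j}(z_j) \le \varepsilon_j \to 0$ and $u^{\varepsilon_j}(z_j) \to u_0(z^*)$ by local uniform convergence), and then tracing $u_0$ along the segment $[x_0, z^*]$ would produce a first zero $\tilde z \in \partial\{u_0 > 0\}$ with $|\tilde z - x_0| \le \rho$.

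The main obstacle is this last geometric identification: showing that $\tilde z$ actually belongs to $\Omega^\prime$, and hence to $\mathfrak{F}(u_0, \Omega^\prime)$, yielding the contradiction $\dist(x_0, \mathfrak{F}(u_0, \Omega^\prime)) \le \rho < \dist(x_0, \mathfrak{F}(u_0, \Omega^\prime))$. This is precisely where the hypothesis $\dist(x_0, \mathfrak{F}(u_0, \Omega^\prime)) \le \dist(\Omega^\prime, \partial \Omega)$ is invoked, as it guarantees that segments from $x_0$ of length at most $\rho$ remain within a neighborhood of $\Omega^\prime$ in which $\mathfrak{F}(u_0, \Omega^\prime)$ captures all of the free boundary; combining the two estimates then yields \eqref{ocontrol}.
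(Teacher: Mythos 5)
Your strategy --- passing to the limit in the two-sided estimate from Corollary \ref{c4.1}, with the upper inequality coming from the uniform Lipschitz bound and the lower one from the non-degenerate growth --- is exactly what the paper does. The paper's version is shorter: it picks nearest points $y_\varepsilon \in \Omega_\varepsilon$ realizing $d_\varepsilon(x_0)$, passes to a subsequential limit $y_0$, and asserts $y_0 \in \mathfrak{F}(u_0, \Omega^{\prime})$, which is just as terse about the $\Omega^{\prime}$-membership question as the ``main obstacle'' you flag; your segment-tracing argument to produce $\tilde z \in \partial\{u_0>0\}$ is a slightly more careful rendering of the same passage to the limit.
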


\begin{proof}
From Corollary \ref{c4.1} we know that there exists $y_{\varepsilon}\in\Omega_{\varepsilon}$ such that
$$
    d_{\varepsilon}(x)=|x-y_{\varepsilon}| \,\, \mbox{and} \,\, u^{\varepsilon}(x) \ge c\, d_{\varepsilon}(x) = c\, |x-y_{\varepsilon}|,
$$
for some constant $c>0$ independent of $\varepsilon$. Passing to a subsequence, if necessary, we get for $y_{\varepsilon} \to y_0 \in \mathfrak{F}(u_0, \Omega^{\prime})$
$$
	u_{0}(x) \ge c\,|x_0 - y_0| \ge c\, \dist(x,\mathfrak{F}(u_0, \Omega^{\prime})).
$$
Finally, the upper bound is a consequence of the local Lipschitz estimate for $u_0$.
\end{proof}

The next theorem is an immediate consequence of Theorem \ref{t4.2} as $\varepsilon \to 0^{+}$.

\begin{theorem}\label{limite1} Let $\Omega^{\prime} \Subset \Omega$. For any $x_0 \in \mathfrak{P}(u_0, \Omega^{\prime})$ such that $\dist(x_0, \mathfrak{F}(u_0, \Omega^{\prime}))\leq\dist(\Omega^{\prime}, \partial \Omega)$, there exist constants $C_0>0$ and $r_0>0$ independent of $\varepsilon$, such that
$$
C_0^{-1} r \le \sup_{B_{r}(x_0)} u_0 \le C_0( r + u_0(x_0))
$$
provided $r \le r_0$.
\end{theorem}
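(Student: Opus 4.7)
The plan is to derive Theorem \ref{limite1} by transferring the two-sided bound of Theorem \ref{t4.2}, applied to the approximating solutions $u^{\varepsilon_j}$, to the limit $u_0$ via the locally uniform convergence $u^{\varepsilon_j} \to u_0$.

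First, I would fix $r_0 > 0$ small enough that $\overline{B_{r_0}(x_0)} \Subset \Omega$ (this is possible because $x_0 \in \Omega^{\prime} \Subset \Omega$) and also small enough to meet the smallness constraint on $\rho$ built into Theorem \ref{t4.2}. Fix any $r \le r_0$. Since $x_0 \in \mathfrak{P}(u_0, \Omega^{\prime})$ we have $u_0(x_0) > 0$, and by the locally uniform convergence $u^{\varepsilon_j}(x_0) \to u_0(x_0)$. Hence, for all $j$ sufficiently large (depending on $r$), both $\varepsilon_j < r$ and $u^{\varepsilon_j}(x_0) > \varepsilon_j$ hold, so $x_0 \in \mathfrak{P}(u^{\varepsilon_j} - \varepsilon_j, \Omega^{\prime})$.

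Next, I would apply Theorem \ref{t4.2} at the point $x_0$ with radius $\rho = r$ to obtain, for such large $j$,
\[
c\, r \;<\; \sup_{B_{r}(x_0)} u^{\varepsilon_j} \;\le\; c^{-1}\bigl(r + u^{\varepsilon_j}(x_0)\bigr),
\]
with $c$ a universal constant independent of $\varepsilon_j$. The convergence $u^{\varepsilon_j} \to u_0$ in $C^0(\overline{B_r(x_0)})$ yields $\sup_{B_r(x_0)} u^{\varepsilon_j} \to \sup_{B_r(x_0)} u_0$ and $u^{\varepsilon_j}(x_0) \to u_0(x_0)$, so passing to the limit $j \to \infty$ gives
\[
c\, r \;\le\; \sup_{B_{r}(x_0)} u_0 \;\le\; c^{-1}\bigl(r + u_0(x_0)\bigr),
\]
which is the claimed inequality with $C_0 \defeq c^{-1}$.

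Essentially nothing is delicate here: the content of the theorem is already packaged in Theorem \ref{t4.2}, and the only mild technical point is making sure the hypothesis $x_0 \in \mathfrak{P}(u^{\varepsilon_j} - \varepsilon_j, \Omega^{\prime})$ of Theorem \ref{t4.2} is eventually verified, which follows from $u_0(x_0) > 0$ together with $\varepsilon_j \to 0$. One only loses the strict lower bound in the limit, but the statement requires only $C_0^{-1} r \le \sup_{B_r(x_0)} u_0$, so this is harmless.
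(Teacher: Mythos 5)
Your proposal is correct and follows exactly the paper's route: the paper itself presents Theorem~\ref{limite1} as ``an immediate consequence of Theorem~\ref{t4.2} as $\varepsilon \to 0^{+}$,'' and your write-up simply fills in the limiting argument (uniform convergence on $\overline{B_r(x_0)}$, eventual validity of the hypothesis $x_0 \in \mathfrak{P}(u^{\varepsilon_j}-\varepsilon_j,\Omega^{\prime})$ since $u_0(x_0)>0$ and $\varepsilon_j\to 0$) that the paper leaves implicit.
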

The following result shows that, in Hausdorff distance, $\Omega_{\varepsilon}$ converges to $\mathfrak{P}(u_0, \Omega^{\prime})$ as $\varepsilon\to 0^{+}$.

\begin{theorem} \label{limite2} Let $\Omega^{\prime} \Subset \Omega$. Then for a $C_1>1$, the following inclusions hold:
\begin{equation}\nonumber
	\mathfrak{P}(u_0, \Omega^{\prime}) \subset \mathcal{N}_{\delta}(\{u^{\varepsilon_j} > C_1 \varepsilon_j\}) \cap \Omega' \,\, \mbox{and} \,\,
\{u^{\varepsilon_j} > C_1 \varepsilon_j\} \cap \Omega' \subset \mathcal{N}_{\delta}(\{u_0 > 0\}) \cap \Omega',
\end{equation}
provided $\varepsilon_j\leq\delta \ll 1$.
\end{theorem}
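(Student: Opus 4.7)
The plan is to establish the two inclusions separately, in each case pairing the appropriate non-degeneracy estimate with the locally uniform convergence $u^{\varepsilon_j}\to u_0$ supplied by item~(1) of the preceding theorem. I fix $C_1>1$ (the value is constrained only by $C_1>1$, which is needed in the second part so that $\{u^{\varepsilon_j}>C_1\varepsilon_j\}\subset \mathfrak{P}(u^{\varepsilon_j}-\varepsilon_j,\Omega')$) and take $\delta>0$ small enough that both Theorem~\ref{t4.2} and Theorem~\ref{limite1} apply at scale $\delta$; the smallness of $\varepsilon_j$ relative to $\delta$ enters quantitatively at the end of each argument.

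For the first inclusion, I would fix $x\in\mathfrak{P}(u_0,\Omega')$ and apply the non-degeneracy of $u_0$ (Theorem~\ref{limite1}) at scale $\delta/2$ to produce $y\in\overline{B_{\delta/2}(x)}$ with $u_0(y)\geq\delta/(4C_0)$. Locally uniform convergence on this compact set then yields $u^{\varepsilon_j}(y)\geq\delta/(8C_0)$ for all $j$ sufficiently large. Choosing $\varepsilon_j$ small enough compared to $\delta$, concretely $\varepsilon_j<\delta/(8C_0C_1)$ (the quantitative content of $\varepsilon_j\leq\delta\ll 1$), one obtains $u^{\varepsilon_j}(y)>C_1\varepsilon_j$. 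Hence $y\in\{u^{\varepsilon_j}>C_1\varepsilon_j\}$ with $|x-y|\leq\delta/2<\delta$, and the first containment follows.

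For the second inclusion, I would argue by contradiction: take $x\in\{u^{\varepsilon_j}>C_1\varepsilon_j\}\cap\Omega'$ and suppose $\dist(x,\{u_0>0\})\geq\delta$, which means $u_0\equiv 0$ on $\overline{B_\delta(x)}$. Since $C_1>1$, we have $x\in\mathfrak{P}(u^{\varepsilon_j}-\varepsilon_j,\Omega')$, so Theorem~\ref{t4.2} applied at $\rho=\delta$ (admissible since $\varepsilon_j\leq\delta\ll 1$) delivers a universal $c>0$ with
\[
c\,\delta\;<\;\sup_{B_\delta(x)}u^{\varepsilon_j}.
\]
On the other hand, locally uniform convergence on the compact set $\overline{B_\delta(x)}$ forces $\sup_{B_\delta(x)}u^{\varepsilon_j}\to\sup_{B_\delta(x)}u_0=0$, contradicting the displayed lower bound once $j$ is large. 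Thus $\dist(x,\{u_0>0\})<\delta$.

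The main obstacle is purely bookkeeping: coordinating the scales so that $\delta$ lies below the universal thresholds of both non-degeneracy estimates, and $\varepsilon_j$ is eventually small compared to $\delta$ (as needed for the quantitative step in the first part). A minor technicality is the auxiliary hypothesis of Theorem~\ref{limite1} requiring $\dist(x,\mathfrak{F}(u_0,\Omega'))\leq\dist(\Omega',\partial\Omega)$; should it fail at some $x\in\Omega'$, one interposes an intermediate sub-domain $\Omega'\Subset\Omega''\Subset\Omega$ and runs the same argument on $\Omega''$, which leaves the inclusions on $\Omega'$ intact.
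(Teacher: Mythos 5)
Your proof is correct and follows the same core strategy as the paper: pair a non-degeneracy estimate with the locally uniform convergence $u^{\varepsilon_j}\to u_0$, quantifying the scales $\varepsilon_j\ll\delta$. The paper proves only the first inclusion (by contradiction, via Theorem \ref{limite1} plus uniform convergence) and declares the second ``similar''; your direct version of the first inclusion is equivalent, and your treatment of the second inclusion correctly identifies that it actually rests on the $\varepsilon$-level non-degeneracy, Theorem \ref{t4.2}, rather than on Theorem \ref{limite1}, a point the paper glosses over.
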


\begin{proof} We prove the first inclusion (the other one can be obtained in a similar way). Suppose that it is not true. Then there exists a $\delta_0>0$ such that for every $\varepsilon_j\to 0$ and $\forall x_j \in \mathfrak{P}(u_0, \Omega^{\prime})$
\begin{equation}\label{dist}
\dist(x_j,\{u^{\varepsilon_j}>C_1\varepsilon_j\})>\delta_0.
\end{equation}
For some $y\in \overline{B_{\frac{\delta_0}{2}}(x_j)} \cap \{u^{\varepsilon_j}>C_1\varepsilon_j\}$ we have from Theorem \ref{limite1}
$$
u^{\varepsilon_j}(y) = \sup_{B_{\frac{\delta_0}{2}}(x_j)}u^{\varepsilon_j}(x_j) \geq \frac{1}{2}\sup_{B_{\frac{\delta_0}{2}}(x_j)}u_0(x_j) \geq c \delta_0 \geq C_1\varepsilon_j,
$$
which contradicts \eqref{dist}.
\end{proof}

\begin{theorem} \label{non_deg}
Given $\Omega^{\prime} \Subset \Omega$, there exist constants $C>0$ and $\rho_0>0$, depending only on $\Omega^{\prime}$ and universal parameters, such that for any $x_0 \in \mathfrak{F}(u_0, \Omega^{\prime})$ there holds
\begin{equation}\label{undeg}
	\displaystyle C^{-1}\rho \le \fint\limits_{\;\partial B_{\rho}(x_0)}u_0(x)  \; d \mathcal{H}^{n-1} \le C\,\rho.
\end{equation}
provided $\rho \leq  \rho_0$.
\end{theorem}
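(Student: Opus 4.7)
My plan is to split \eqref{undeg} into its upper and lower halves, treating them separately; the upper bound is immediate from Lipschitz regularity while the lower bound carries the essential geometric content. Throughout I will denote by $L$ a Lipschitz constant for $u_0$ on a fixed neighbourhood of $\overline{\Omega'}$, available by passing to the limit in Theorem \ref{t3.1}. The upper inequality is then a one-line observation: since $x_0 \in \mathfrak{F}(u_0, \Omega')$ gives $u_0(x_0)=0$, every $x \in \partial B_\rho(x_0)$ satisfies $u_0(x) \le L\rho$, and spherical averaging yields $\fint_{\partial B_\rho(x_0)} u_0 \, d\mathcal{H}^{n-1} \le L\rho$, as soon as $\rho_0$ is chosen so that $B_{\rho_0}(x_0)$ stays in that neighbourhood.

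For the lower bound I would proceed in two moves. First, I would produce a point $y_\rho \in \partial B_\rho(x_0)$ with $u_0(y_\rho) \gtrsim \rho$; then I would spread this via Lipschitz continuity over a spherical cap of $\mathcal{H}^{n-1}$-measure comparable to $\rho^{n-1}$. The first move is the conceptual heart of the argument and rests on the observation that $u_0$ is $\infty$-subharmonic on \emph{all} of $\Omega$, not merely on $\mathfrak{P}(u_0, \Omega')$. Indeed, each $u^{\varepsilon}$ is a viscosity subsolution of $\Delta_\infty v=0$ because $\zeta_\varepsilon \ge 0$, and this property persists under local uniform convergence; hence $\Delta_\infty u_0 \ge 0$ in the viscosity sense throughout $\Omega$, in particular across the free boundary. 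The maximum principle for $\infty$-subharmonic functions then yields
\[
\sup_{\overline{B_\rho(x_0)}} u_0 \;=\; \sup_{\partial B_\rho(x_0)} u_0.
\]
Coupling this with Theorem \ref{limite1} (applied after a routine approximation of $x_0$ by points of $\mathfrak{P}(u_0, \Omega')$ and passage to the limit, which preserves the left-hand inequality of \eqref{ocontrol}) delivers the desired $y_\rho$ with $u_0(y_\rho) \ge C_0^{-1}\rho$.

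The second move is Lipschitz bookkeeping. Setting $\delta \defeq C_0^{-1}\rho/(2L)$, every $x$ in the spherical cap $\mathcal{K}_\rho \defeq \partial B_\rho(x_0) \cap B_\delta(y_\rho)$ satisfies $u_0(x) \ge C_0^{-1}\rho/2$, and a routine computation in spherical coordinates gives $\mathcal{H}^{n-1}(\mathcal{K}_\rho) \ge c_n \delta^{n-1}$ with $c_n$ depending only on $n$. Integrating over $\mathcal{K}_\rho$ and normalising by $\mathcal{H}^{n-1}(\partial B_\rho(x_0)) = n\omega_n \rho^{n-1}$ then produces the lower bound $\fint_{\partial B_\rho(x_0)} u_0 \, d\mathcal{H}^{n-1} \ge C^{-1}\rho$. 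The main obstacle I anticipate is the first move --- specifically, checking that $\infty$-subharmonicity of $u_0$ extends intact across the free boundary, so that the ball-type non-degeneracy of Theorem \ref{limite1} can be upgraded to a sphere-type statement. Once this is in place, the remainder is a standard interplay between the uniform Lipschitz estimate and the non-degeneracy.
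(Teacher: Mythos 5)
Your proposal is correct, and its overall architecture (upper bound from Lipschitz, lower bound from non-degeneracy, then spread a large value over a spherical cap) coincides with the paper's terse two-sentence proof. What you add, and what the paper leaves entirely implicit, is the bridge from the \emph{ball} statement $\sup_{B_\rho(x_0)} u_0 \ge C_0^{-1}\rho$ of Theorem \ref{limite1} to the \emph{sphere} statement needed for the spherical average. Your route via $\infty$-subharmonicity is clean: since $\Delta_\infty u^{\varepsilon} = \zeta_\varepsilon \ge 0$, each $u^\varepsilon$ is a viscosity subsolution of $\Delta_\infty v=0$, this persists under locally uniform convergence, and then the weak maximum principle (take $f\equiv0$ and $v\equiv\max_{\partial B_\rho}u_0$ in Proposition \ref{CompPrinc}) pushes the supremum to $\partial B_\rho(x_0)$. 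An alternative and perhaps more direct route, closer to what the paper presumably intends by ``nondegeneracy,'' is to observe that the barrier argument in the proof of Theorem \ref{t4.2} already produces the contact point $z$ \emph{on the sphere} $\partial B_\rho(x_0)$ with $u^\varepsilon(z)\ge\kappa_0\rho$; passing to the limit along $\varepsilon_j\to0$ (with $z_{\varepsilon_j}\to z_0\in\partial B_\rho(x_0)$ up to a subsequence) yields $u_0(z_0)\ge\kappa_0\rho$ directly, without invoking the maximum principle. Both arguments are valid; yours buys generality (it uses only the sign of the right-hand side and the stability of viscosity subsolutions), while the barrier route stays inside machinery the paper has already set up. Two small remarks: you cite ``the left-hand inequality of \eqref{ocontrol}'' but the estimate you actually use is the lower bound in Theorem \ref{limite1}; and the cap estimate $\mathcal{H}^{n-1}(\mathcal{K}_\rho)\ge c_n\delta^{n-1}$ tacitly requires $\delta\lesssim\rho$, which holds here since $\delta=C_0^{-1}\rho/(2L)$ with universal $C_0,L$, but is worth stating.
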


\begin{proof}
The upper bound follows from the Lipschitz regularity of $u_0$. The lower bound is a consequence of the nondegeneracy.
\end{proof}

\begin{remark}\label{r6.2}
Repeating the steps of the proof of Theorem \ref{ThmHarIneq} one can show that the Harnack inequality is true for $u_0$ in touching balls. Furthermore, as a consequence of the non-degeneracy and the growth rate, one can prove (as it was done in Theorem \ref{t5.2}) that the free boundary $\mathfrak{F}(u_0)$ is a porous set.
\end{remark}

Next, we prove several geometric-measure properties for $\mathfrak{F}(u_0)$. The ultimate goal is to prove the local finiteness of the $(n-1)$-dimensional Hausdorff measure of the limiting level surface.

First we see that the set $\{u_0>0\}$ has uniform density along $\mathfrak{F}(u_0)$.

\begin{theorem}\label{densidade4}
Let $\Omega^{\prime}\Subset \Omega$. There exists a constants $c_0>0$ such that for any $x_0 \in \mathfrak{F}(u_0, \Omega^{\prime})$ there holds
\begin{equation}\label{den}
\mathfrak{D}(u_0, B_{\rho}(x_0)) \ge c_0	
\end{equation}
provided $\rho \ll 1$. In particular, $\mathscr{L}^{n}(\mathfrak{F}(u_0))=0$.
\end{theorem}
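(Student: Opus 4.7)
The plan is to combine the non-degeneracy of $u_0$ (Theorem \ref{limite1}) with its local Lipschitz continuity (inherited from Theorem \ref{t3.1} in the uniform limit) in the same way that Corollary \ref{c4.2} was deduced from Theorem \ref{t4.2}, and then extract the measure-zero statement from positive density via the Lebesgue differentiation theorem.

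First, fix $x_0 \in \mathfrak{F}(u_0,\Omega^{\prime})$ and take $\rho$ small enough that Theorem \ref{limite1} applies on $B_{\rho/2}(x_0)$. Since $u_0(x_0)=0$, the lower bound in \eqref{ocontrol} style (Theorem \ref{limite1}) gives a point $y_0\in\overline{B_{\rho/2}(x_0)}$ with
\[
u_0(y_0)\;\ge\;\sup_{B_{\rho/2}(x_0)} u_0 \;\ge\; C_0^{-1}\,\tfrac{\rho}{2}.
\]
Second, denote by $L$ the (universal) local Lipschitz constant of $u_0$ on a fixed sub-domain containing $B_{\rho_0}(x_0)$. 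For every $z\in B_{\kappa\rho}(y_0)$ the Lipschitz estimate yields
\[
u_0(z)\;\ge\;u_0(y_0)-L\,\kappa\,\rho\;\ge\;\bigl(\tfrac{1}{2C_0}-L\kappa\bigr)\rho.
\]
Choose $\kappa\in(0,1/2)$ universal so that $\kappa<\tfrac{1}{2LC_0}$; then $u_0(z)>0$ throughout $B_{\kappa\rho}(y_0)$, i.e.\ $B_{\kappa\rho}(y_0)\subset\{u_0>0\}$. Since $\kappa<1/2$ and $y_0\in\overline{B_{\rho/2}(x_0)}$, the inclusion $B_{\kappa\rho}(y_0)\subset B_{\rho}(x_0)$ holds, and hence
\[
\mathscr{L}^{n}\bigl(\{u_0>0\}\cap B_\rho(x_0)\bigr)\;\ge\;\mathscr{L}^{n}\bigl(B_{\kappa\rho}(y_0)\bigr)\;=\;\omega_n\kappa^n\,\rho^n\;=\;c_0\,\mathscr{L}^{n}(B_\rho(x_0)),
\]
with $c_0\defeq\kappa^n\in(0,1)$ universal. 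This is precisely \eqref{den}.

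Finally, for $\mathscr{L}^n(\mathfrak{F}(u_0))=0$, argue by contradiction: if $\mathfrak{F}(u_0)$ had positive Lebesgue measure, the Lebesgue density theorem would supply $x_\ast\in\mathfrak{F}(u_0,\Omega^{\prime})$ at which
\[
\lim_{\rho\to 0^+}\frac{\mathscr{L}^n\bigl(\mathfrak{F}(u_0)\cap B_\rho(x_\ast)\bigr)}{\mathscr{L}^n(B_\rho(x_\ast))}\;=\;1.
\]
But $\mathfrak{F}(u_0)\subset\{u_0=0\}$ is disjoint from $\{u_0>0\}$, so the bound just proved forces
\[
\frac{\mathscr{L}^n\bigl(\mathfrak{F}(u_0)\cap B_\rho(x_\ast)\bigr)}{\mathscr{L}^n(B_\rho(x_\ast))}\;\le\;1-c_0\;<\;1,
\]
for every small $\rho$, a contradiction. (Alternatively one could invoke the porosity statement of Remark \ref{r6.2}, since porous sets are Lebesgue-null, but the density argument is self-contained.)

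The only delicate point is the bookkeeping in Step 2: one must choose the auxiliary point $y_0$ in a ball of radius strictly smaller than $\rho$ (here $\rho/2$) so that the Lipschitz ball $B_{\kappa\rho}(y_0)$ fits inside $B_\rho(x_0)$ after $\kappa$ is fixed. Once $\kappa$ is chosen universally from the Lipschitz constant and the non-degeneracy constant, everything is independent of $\varepsilon$, as required.
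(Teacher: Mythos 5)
Your proof is correct and follows the same scheme the paper sketches: repeat Corollary~\ref{c4.2}'s argument (non-degeneracy yields $y_0 \in \overline{B_{\rho/2}(x_0)}$ with $u_0(y_0)\ge c\rho$, the Lipschitz estimate then places $B_{\kappa\rho}(y_0)\subset\{u_0>0\}\cap B_\rho(x_0)$), and then invoke the Lebesgue density theorem for the measure-zero conclusion. The only slight wrinkle is that Theorem~\ref{limite1} is stated for $x_0\in\mathfrak{P}(u_0,\Omega^{\prime})$ while here $x_0\in\mathfrak{F}(u_0,\Omega^{\prime})$; a clean fix is to cite Theorem~\ref{non_deg} (which is stated precisely at free-boundary points) to get the lower bound on the sup, or to approximate $x_0$ by nearby points of $\{u_0>0\}$ and pass to the limit.
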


\begin{proof}
The estimate \eqref{den} follows as in the proof of Corollary \ref{c4.2}. We conclude the resuly by using Lebesgue differentiation theorem and a covering argument (Besicovitch-Vitali type theorem, see \cite{EG}).
\end{proof}
\begin{theorem}\label{limit3}
Let $\Omega^{\prime} \Subset \Omega$. There exists a constant $C>0$, depending only on $\Omega^{\prime}$ and universal parameters such that, for any $x_0 \in \mathfrak{F}(u_0, \Omega^{\prime})$, there holds
$$
\mathcal{H}^{n-1} (\mathfrak{F}(u_0, \Omega^{\prime}) \cap B_{\rho}(x_0)) \le C \rho^{n-1}.
$$
\end{theorem}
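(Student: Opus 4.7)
The plan is to inherit the estimate from the approximating $\varepsilon$-level surfaces. Theorem \ref{Hausdf} already gives a uniform bound $\mathcal{H}^{n-1}(\mathfrak{F}(u^{\varepsilon_j}-C_1\varepsilon_j)\cap B_\rho) \le C\rho^{n-1}$, and Theorem \ref{limite2} tells us these approximating level surfaces Hausdorff-converge to $\mathfrak{F}(u_0)$. So the theorem should follow from a careful passage to the limit, combining these two ingredients. The auxiliary tools are the uniform convergence $u^{\varepsilon_j}\to u_0$, the Lipschitz regularity (Theorem \ref{t3.1}) and the non-degeneracy of $u_0$ along its free boundary (Theorem \ref{limite1}).

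\textbf{Key steps.} First, fix $x_0\in \mathfrak{F}(u_0,\Omega')$ and $\rho$ small enough so that $B_{2\rho}(x_0)\Subset\Omega'$. Using uniform convergence together with the non-degeneracy of $u_0$, I would produce a sequence $x_0^j \in \mathfrak{F}(u^{\varepsilon_j}-C_1\varepsilon_j,\Omega')$ with $x_0^j\to x_0$; this is essentially the content of Theorem \ref{limite2} applied to $B_r(x_0)$ for arbitrarily small $r$. Second, apply Theorem \ref{Hausdf} along this sequence to obtain, for $j$ large so that $C_1\varepsilon_j \le \rho$,
$$
\mathcal{H}^{n-1}\!\left(\mathfrak{F}(u^{\varepsilon_j}-C_1\varepsilon_j)\cap B_{2\rho}(x_0^j)\right)\le C\,(2\rho)^{n-1}.
$$
Third, invoke Theorem \ref{limite2} once more to obtain, for every $\delta>0$ and all sufficiently large $j$, the inclusion
$$
\mathfrak{F}(u_0,\Omega')\cap B_\rho(x_0)\subset \mathcal{N}_\delta\!\left(\mathfrak{F}(u^{\varepsilon_j}-C_1\varepsilon_j)\cap B_{2\rho}(x_0^j)\right).
$$
Finally, transfer the Hausdorff content bound: given $\eta>0$, pick a near-optimal cover of the approximating free boundary by balls $\{B_{r_k}(y_k)\}$ of radius $r_k\le\eta/4$; after a standard merging step one may assume $r_k\ge\delta$ where $\delta=\delta(\eta)\ll \eta$. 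The thickenings $\{B_{r_k+\delta}(y_k)\}$ then form an $\eta$-cover of $\mathfrak{F}(u_0,\Omega')\cap B_\rho(x_0)$ with total $(n-1)$-content comparable to $\sum r_k^{n-1}\le C\rho^{n-1}$, so $\mathcal{H}^{n-1}_\eta\bigl(\mathfrak{F}(u_0,\Omega')\cap B_\rho(x_0)\bigr)\le C\rho^{n-1}$. Letting $\eta\to 0^+$ yields the conclusion.

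\textbf{Main obstacle.} The delicate point is the last step: transferring an $\eta$-efficient cover for the approximating surface into an $\eta$-cover for the limit surface without losing control over $\sum (\diam)^{n-1}$. The proliferation of extremely small cover elements could in principle spoil the bound after the $\delta$-thickening, so one must fix an order of quantifiers (first choose $\eta$, then $\delta=\delta(\eta)\ll \eta$, then $j=j(\delta)$ large) and control the multiplicity through the Vitali/Besicovitch covering theorem or by discarding cover sets of radius below the thickening scale. The rest of the argument — extracting $x_0^j\to x_0$ and invoking the Hausdorff-distance convergence of level sets — is standard given the non-degeneracy and uniform convergence already established in the paper.
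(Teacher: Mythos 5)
Your plan correctly identifies the two inputs (Theorem~\ref{Hausdf} on the $\varepsilon$-level surfaces and the Hausdorff convergence from Theorem~\ref{limite2}), but the final transfer step has a genuine gap. A bound on $\mathcal{H}^{n-1}$ of the approximating free boundaries is \emph{not} enough to pass to the limit in Hausdorff distance: Hausdorff measure is not upper semicontinuous under Hausdorff convergence of sets. Your proposed fix, ``after a standard merging step one may assume $r_k\ge\delta$'', is precisely where the argument fails. A near-optimal $\eta$-cover of $\mathfrak{F}(u^{\varepsilon_j}-C_1\varepsilon_j)$ can consist entirely of balls with radii far below $\delta$; forcing a lower radius bound $r_k\ge\delta$ changes the quantity being optimized from the Hausdorff content $\mathcal{H}^{n-1}_\eta$ to (roughly) the Minkowski content at scale $\delta$, which for a set of small $\mathcal{H}^{n-1}$ measure but oscillatory geometry can be arbitrarily larger; $\sum r_k^{n-1}$ is then no longer controlled by $\rho^{n-1}$. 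Discarding the sub-$\delta$ balls instead is no better: the surviving balls, even after $\delta$-thickening, need not cover the limit free boundary. In short, a $\mathcal{H}^{n-1}$ bound alone does not control the volume of tubular neighborhoods, and it is tubular neighborhoods that survive the passage to the limit.

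The paper sidesteps this by propagating a \emph{Minkowski content} estimate rather than a Hausdorff measure estimate. The real input from the proof of Theorem~\ref{Hausdf} (Theorem~5.6 of~\cite{ART}) is the tubular-neighborhood bound
$\mathcal{L}^n\big(\mathcal{N}_\delta(\partial\{u^{\varepsilon_j}>C_1\varepsilon_j\})\cap B_{2\rho}(x_0)\big)\le C\delta\rho^{n-1}$,
which together with the inclusion $\mathcal{N}_\delta(\mathfrak{F}(u_0,\Omega'))\cap B_\rho(x_0)\subset\mathcal{N}_{4\delta}(\partial\{u^{\varepsilon_j}>C_1\varepsilon_j\})\cap B_{2\rho}(x_0)$ coming from Theorem~\ref{limite2} yields
$\mathcal{L}^n\big(\mathcal{N}_\delta(\mathfrak{F}(u_0,\Omega'))\cap B_\rho(x_0)\big)\le C\delta\rho^{n-1}$.
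One then covers $\mathfrak{F}(u_0,\Omega')\cap B_\rho(x_0)$ by balls of \emph{fixed} radius $\delta$ centered on the limiting free boundary, with bounded overlap (Besicovitch); the volume bound controls the cardinality of the cover by $C(\rho/\delta)^{n-1}$, hence $\mathcal{H}^{n-1}_\delta\le C(\rho+\delta)^{n-1}$, and letting $\delta\to0^+$ finishes. The quantity to carry through the limit is the Lebesgue measure of a $\delta$-neighborhood, not the Hausdorff premeasure, and that is the key idea missing from your sketch.
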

\begin{proof}
From Theorem \ref{limite2}, for $j \gg 1$ one has
$$
\left[ \mathcal{N}_{\delta}(\mathfrak{F}(u_0, \Omega^{\prime})) \cap B_{\rho}(x_0) \right] \subset  \left[ \mathcal{N}_{4 \delta}(\partial \{u^{\varepsilon_{j}} > C_1 \varepsilon_j\}) \cap B_{2 \rho}(x_0) \right].
$$
Assuming $\varepsilon_j \leq\delta\leq\rho \ll \dist(\Omega^{\prime} , \partial \Omega)$, the hypotheses
of Theorem \ref{Hausdf} are fulfilled, implying the following estimate for the $\delta$-neighborhood,
$$
	\mathcal{L}^{\,n}(\mathcal{N}_{\delta}(\mathfrak{F}(u_0, \Omega^{\prime})) \cap B_{\rho}(x_0)) \le C\delta \rho^{n-1}.
$$
Now, let $\{B_j\}_{j \in \N}$ be a covering of $\mathfrak{F}(u_0, \Omega^{\prime}) \cap B_{\rho}(x_0)$ by balls with radii $\delta>0$ and centered at free boundary points on $\mathfrak{F}(u_0, \Omega^{\prime}) \cap B_{\rho}(x_0)$. Then
$$
	\bigcup_{j} B_j \subset \mathcal{N}_{\delta}(\mathfrak{F}(u_0, \Omega^{\prime})) \cap B_{\rho + \delta}(x_0).
$$
Therefore, there exists a constant $\overline{C}>0$ with universal dependence such that
\begin{eqnarray*}
\displaystyle \mathcal{H}^{n-1}_{\delta}(\mathfrak{F}(u_0, \Omega^{\prime}) \cap B_{\rho}(x_0)) &\le& \overline{C} \sum_{j} \mathcal{L}^{n-1}(\partial B_j)\\
&= & n\frac{\overline{C}}{\delta} \mathcal{L}^{n}(B_j)\\
&\le& n\frac{\overline{C}}{\delta} \mathcal{L}^{n} (\mathcal{N}_{\delta}(\mathfrak{F}(u_0, \Omega^{\prime})) \cap B_{\rho+\delta}(x_0))\\
&\le& C(n) (\rho + \delta)^{n-1}\\
& =& C(n) \rho^{n-1} + o(\delta).
\end{eqnarray*}
Letting $\delta \to 0^{+}$ we finish the proof.
\end{proof}

As an immediate consequence of Theorem \ref{limit3} we conclude that $\mathfrak{F}(u_0)$ has locally finite perimeter. Moreover, the reduced free boundary $\mathfrak{F}^{\star}(u_0) \defeq \partial_{\textrm{red}} \{u_0 >0\}$ has a total $\mathcal{H}^{n-1}$ measure in the sense that $\mathcal{H}^{n-1}(\mathfrak{F}(u_0) \setminus \mathfrak{F}^{\star}(u_0)) =0$ (Theorem 6.7 in \cite{ART}). In particular, the free boundary has an outward vector for $\mathcal{H}^{n-1}$ almost everywhere in $\mathfrak{F}^{*}(u_0)$.

\section{Final comments}\label{r5.2}

\hspace{0.3cm}We finish the paper by analysing the one-dimensional profile representing the corresponding free boundary condition. Let
\begin{equation}\label{5.6}
u^{\varepsilon}_{xx}(u^{\varepsilon}_{x})^2 = \zeta_{\varepsilon}(u^{\varepsilon}) \quad \mbox{in} \quad (-1, 1),
\end{equation}
where $\zeta_{\varepsilon}$ given by
$$
  \zeta_{\varepsilon}(s) = \frac{1}{\varepsilon}\zeta\left(\frac{s}{\varepsilon}\right)
$$
is a high energy activation potential, i.e., a non-negative smooth function supported in $[0, \varepsilon]$. The limiting configuration satisfies (in the viscosity sense)
$$
\Delta_{\infty} u_0 = 0 \quad \mbox{in} \quad \{u_0>0\}\cap (-1, 1).
$$
Multiplying \eqref{5.6} by $u^{\varepsilon}_{x}$ we get
\begin{equation}\label{5.7}
u^{\varepsilon}_{xx}(u^{\varepsilon}_{x})^3 = \zeta_{\varepsilon}(u^{\varepsilon}).u^{\varepsilon}_{x} = \frac{d}{dx}\Xi_{\varepsilon}(u^{\varepsilon}),
\end{equation}
where
$$
\displaystyle \Xi_{\varepsilon}(t) = \int_{0}^{\frac{t}{\varepsilon}} \zeta(s)ds \rightarrow \left(\int \zeta(s)ds\right)\chi_{\{t>0\}}
$$
as $\varepsilon \to 0^+$, i.e.,
$$
\displaystyle \Xi_{\varepsilon}(u^{\varepsilon}) \rightarrow \int \zeta(s)ds, \quad \mbox{as} \quad \varepsilon \to 0^+
$$
provided $u_0(x)>0$. Using change of variable
$$
u^{\varepsilon}_{x}(x)=w,
$$
we re-write
$$
\displaystyle \int \frac{d}{dx}\Xi_{\varepsilon}(u^{\varepsilon})  = \int (u^{\varepsilon})^3_{x} u^{\varepsilon}_{xx}dx = \int w^3dw.
$$
Hence, by computing the anti-derivatives at \eqref{5.7} and letting $\varepsilon \to 0^+$ we obtain the following characterization for limiting condition
$$
\displaystyle |u^{\prime}_0| = \sqrt[4]{4\int \zeta(s)ds} \quad \mbox{on} \quad \partial \{u_0>0\}.
$$
Therefore, the corresponding one-dimensional limiting free boundary problem is given by
$$
\left\{
\begin{array}{rclcl}
\Delta_{\infty} u_0  &=&  0 & \mbox{in} & \{u_0>0\} \cap (-1, 1),\\
u_0 & = & 0 & \mbox{in} & \partial \{u_0>0\},\\
\displaystyle |u^{\prime}_0|  & = & \sqrt[4]{4\int \zeta(s)ds} & \mbox{on} & \partial \{u_0>0\}.
\end{array}
\right.
$$
Furthermore, if for some direction $x_i$ we have
$$
u^{\varepsilon}_{x_ix_i}(u^{\varepsilon}_{x_i})^2 \leq \zeta_{\varepsilon}(u^{\varepsilon}) \quad \mbox{in} \quad \Omega,
$$
then by repeating the previous argument (since $u^{\varepsilon}$ is increasing in direction $x_i$), we conclude
$$
\left|\frac{\partial u_0}{\partial x_i}\right| \leq \sqrt[4]{4\int \zeta(s)ds} \quad \mbox{on} \quad \partial \{u_0>0\}
$$
in every regular point of the free boundary.

\section*{Acknowledgements}
\addcontentsline{toc}{section}{Acknowledgments}
\hspace{0.6cm} This work was partially supported by CNPq (Ci\^{e}ncia sem Fronteiras) and by FCT - SFRH/BPD/92717/2013, as well as by ANPCyT - PICT 2012-0153 and by CONICET-
Argentina. We would like to thank Noem\'{i} Wolanski and Eduardo Teixeira for several insightful comments and discussions. JVS thanks research group of PDEs of Universidad de Buenos Aires-UBA for fostering a pleasant and productive scientific atmosphere during his postdoctoral program.

\end{document}